\numberwithin{equation}{section}
\newtheorem{theorem}{Theorem}[section]
\newtheorem{rem}[theorem]{Remark}
\newtheorem{defn}[theorem]{Definition}
\newtheorem{lemma}[theorem]{Lemma}
\newtheorem{prop}[theorem]{Proposition}
\newtheorem{corollary}[theorem]{Corollary}
\def\det{\mathop{\rm det}\nolimits}
\def\Re{\mathop{\rm Re}\nolimits}
\def\tr{\mathop{\rm tr}\nolimits}
\def\dbar{\bar\partial}
\def\ddbar{\partial\bar\partial}
\def\d{\partial}
\def\la{\lambda}
\def\cC{{\mathcal C}}
\def\cZ{{\mathcal Z}}
\def\cF{{\mathcal F}}
\let\sm=\setminus
\let\ol=\overline
\let\ul=\underline
\let\ep=\varepsilon
\let\vp=\varphi 
\def\bC{{\mathbb C}}
\def\bR{{\mathbb R}}
\def\a{{\alpha}}
\def\z{{\zeta}}
\def\de{{\delta}}
\def\tg{\Gamma}
\def\g{\gamma}
\def\b{\beta}
\def\k{{\kappa}}
\def\tr{{\text{tr}}}
\title[Conic cscK metrics]
{Conic singularities metrics\\ with prescribed scalar curvature: \\ a priori estimates for normal crossing divisors}
\author{Long LI}
\author{Jian WANG}
\author{Kai ZHENG}
\address{Institute Fourier, 100 rue des maths 38610 Gi\`eres, Grenoble, France}
\email{Long.Li1@univ-grenoble-alpes.fr}
\address{Institute Fourier, 100 rue des maths 38610 Gi\`eres, Grenoble, France}
\email{jian.wang1@univ-grenoble-alpes.fr}
\address{Mathematics Institute, University of Warwick, Coventry, CV4 7AL, UK}
\email{K.Zheng@warwick.ac.uk}  
\begin{document}
\maketitle 

$$\text{Dedicated to Prof. Jean-Pierre Demailly} $$

\begin{abstract}
The purpose of this paper is to prove the a priori estimates for constant scalar curvature K\"ahler metrics with
conic singularities along normal crossing divisors. The zero order estimates are proved by a reformulated version of Alexandrov's maximum principle. 
The higher order estimates follow from Chen-Cheng's frame work, equipped with new techniques to handle the singularities. 
Finally, we extend these estimates to the twisted equations.
\end{abstract}

\section{Introduction}
\label{sec-001}

Recently Chen-Cheng (\cite{CC1}, \cite{CC2}, \cite{CC3}) established the a priori estimates for the constant scalar curvature K\"ahler(cscK) metrics equation, which are fundamental towards the Yau-Tian-Donaldson conjecture on the existence of the cscK metrics. Their estimates lead to the resolution of the properness conjecture and Donaldson's geodesic stability conjecture.



Our  goal is to prove a singular version of the Yau-Tian-Donaldson conjecture,
and this first paper aims to generalise Chen-Cheng's a priori estimates to the \emph{log-smooth klt pair}.
That is to say, our metrics develop cone like singularities along normal crossing divisors.
In the subsequent papers, we will discuss the existence problem for cscK metrics
on \emph{singular klt pairs}.

Let $(X,D)$ be a log smooth klt pair, where $D: = \sum_{k=1}^d (1-\b_k) D_k$
is an $\bR$-divisor on the compact K\"ahler manifold $X$.
Here the index $\b: = \{ \b_k \}_{k=1}^d$ is a collection of angles $0< \b_k <1$.
For some $0< \a <  \min_k\{ \frac{1}{\b_k} -1 ,1\}$, 
we consider a conic H\"older space $\cC^{2,\a,\b}$ first introduced by Donaldson \cite{Don12}.

Suppose $(\vp, F)\in \cC^{2,\a,\b}$ is a pair satisfying the conic cscK equation (Defintion (\ref{csck-defn-001})).
Denote $H_\b(\vp)$ by the entropy of a conic K\"ahler potential $\vp$, with respect to the background metric $\omega_\b$ (equation (\ref{csck-001})): 
$$ H_\b(\vp): = \int_X \log \frac{\omega_\vp^n}{\omega_\b^n}\omega_\vp^n.$$ 
Then the following estimates are proved.

\begin{theorem}
\label{intr-thm-001}
Let $(\vp, F)$ be a $\cC^{2,\a,\b}$-conic cscK pair on $(X,D)$. 
Suppose that its entropy $H_\b(\vp) $ is bounded by a uniform constant $C$.
Then there exists another uniform constant $\tilde C$, such that the following holds:
\begin{enumerate}
\item[(i)] the $C^0$-estimate
$$|| \vp ||_0 < \tilde C;$$

\item[(ii)] the non-degeneracy estimate
$$ -\tilde C < F < \tilde C; $$

\item[(iii)]  the gradient $F$-estimate and the $C^2$-estimate
$$
\max_X |\nabla_\vp F|_\vp + \max_X \text{tr}_{\omega_\b}\omega_\vp < \tilde C. 
$$
\end{enumerate}
\end{theorem}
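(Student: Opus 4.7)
The plan is to follow the Chen--Cheng architecture for cscK estimates, reading the equation as the coupled system $\omega_\vp^n = e^F \omega_\b^n$ together with $\Delta_\vp F = -\underline{R} + \tr_\vp \Ricci(\omega_\b)$, and to push every step through in the presence of the normal crossing divisor $D$ by working on $X \sm D$ and using barriers built from $\sum_k \log|s_k|^2$ to prevent auxiliary functions from attaining extrema on $D$. Throughout one also expects to approximate $\omega_\b$ by a family of smooth K\"ahler forms $\omega_{\b,\ep}$ and to prove estimates that are uniform in $\ep$, so that the conclusions descend to the conic limit.

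For (i) I would first establish a version of the Alexandrov--Bakelman--Pucci inequality adapted to the conic background. Applied to the normalised potential $\vp - \sup_X \vp$, the ABP inequality converts a pointwise bound on $\vp$ into an integral bound for $\det(\ddbar \vp + \omega_\b)$ over the contact set; the Monge--Amp\`ere equation rewrites this as an integral of $e^F \omega_\b^n$, and the entropy hypothesis $H_\b(\vp) \le C$ combined with Jensen's inequality yields $\Vert \vp \Vert_0 \le \tilde C$. The reformulation advertised in the abstract should consist of replacing Euclidean balls and Lebesgue measure by objects compatible with the conic geometry, and in verifying that the boundary contributions arising from slicing away a tubular neighbourhood of $D$ vanish in the limit.

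For (ii) the upper bound on $F$ is obtained by feeding the scalar equation into ABP and exploiting the already known $C^0$ bound on $\vp$, as in \cite{CC1}. The lower bound requires a Moser iteration: one tests the scalar equation against functions of the form $e^{-\la F}$, integrates by parts against $\omega_\vp^n$, and extracts a recursive inequality leading to $-F \le \tilde C$. The conic subtlety is that $\Ricci(\omega_\b)$ has a singular contribution along $D$ of definite sign, so the integrations by parts must be justified on $X \sm D$ with controlled boundary terms. For (iii), the $C^2$ estimate follows from applying $\Delta_\vp$ to a test function of the form $\log \tr_{\omega_\b}\omega_\vp - A \vp - \ep \sum_k \log|s_k|^2$: the last term forces the maximum into $X \sm D$, and the Chen--Cheng/Aubin--Yau computation on the smooth locus, using the $F$-bounds from (ii), yields the desired bound once the constants $A$ and $\ep$ are taken large and small respectively. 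The gradient-$F$ estimate is treated by the same maximum principle applied to a combined test function involving $|\nabla F|_\vp^2$, $\tr_{\omega_\b}\omega_\vp$ and the barrier term, again following the Chen--Cheng recipe.

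The hard part will be controlling the bisectional curvature of the conic background $\omega_\b$ along $D$. While the diagonal components of the curvature of $\omega_\b$ have a favourable positive singularity in the normal directions, mixed components produce unbounded terms that would spoil the standard maximum principle computations. The new technical input is therefore presumably the identification of auxiliary functions in which these singular curvature contributions either cancel or are absorbed into the positive definite terms of the Aubin--Yau inequality, together with a careful ordering of limits: first $\ep \to 0$ in the barrier, then the smooth approximation $\omega_{\b,\ep} \to \omega_\b$. Ensuring that all constants in (i)--(iii) depend only on the entropy bound and on the data of $(X,D,\b)$ — and not on the regularised approximation — is where the bulk of the analytic work will concentrate.
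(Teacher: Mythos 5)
Your outline is plausible at the level of slogans, but it diverges from the paper's argument in several places where the divergence is not cosmetic: a few steps as you have sketched them would not close.

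For (i), you propose to apply ABP directly to $\vp-\sup_X\vp$ and read off $\|\vp\|_0$ from the entropy. This is not how Chen--Cheng's $C^0$ bound works, even in the smooth case, and the gap is genuine. The $C^0$ estimate is reached indirectly: one first introduces an auxiliary potential $\psi_1$ solving $(\omega_\b+dd^c\psi_1)^n = e^F\sqrt{F^2+1}\,\omega_\b^n/\int_X e^F\sqrt{F^2+1}\,\omega_\b^n$ (Kolodziej), proves the key pointwise inequality $F+\ep\psi_1-\lambda\vp\le C$ by a maximum principle, deduces an $L^q$ bound on $e^F$, and only then gets $\|\vp\|_0$ from Kolodziej's $L^\infty$ estimate. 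Your sketch omits $\psi_1$ entirely, and without it the entropy bound does not connect to $\vp$. Moreover the conic fix you describe --- reworking ABP with conic balls/measures and controlling boundary terms along a shrinking tubular neighbourhood of $D$ --- is not what the paper does and is the harder road: the constant in ABP blows up as the ball approaches $D$. The paper's GAMP (Theorem~\ref{amp-thm-001}) keeps the Euclidean ABP unchanged but observes that the inequality only sees the upper contact set $\tg^+$; one then adds to the test function the barrier $\psi_2=\sum_k|s_k|^{2\g_k}$ with $\g_k<\b_k$, whose Laplacian diverges to $+\infty$ near $D$ fast enough to force $\tg^+$ to stay a positive distance from $D$ (Lemma~\ref{c0-lem-002}).

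For (ii), the lower bound of $F$ in the paper is again a GAMP argument applied to $-F-\lambda\vp+\ep_2\psi_2$ (Lemma~\ref{nd-lem-001}), not a Moser iteration; your proposal to iterate with $e^{-\lambda F}$ tested against $\omega_\vp^n$ does not appear to give the pointwise lower bound without the ABP input, and is not the route taken by Chen--Cheng either. For (iii), the pointwise maximum principle you propose for $\log\tr_{\omega_\b}\omega_\vp - A\vp - \ep\sum\log|s_k|^2$ has a fatal term: differentiating the Monge--Amp\`ere equation produces $\Delta F/(n+\Delta\vp)$, and $\Delta F$ has no pointwise bound in the cscK problem --- only $\Delta_\vp F$ is controlled by the scalar equation. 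This is precisely why Chen--Cheng abandon the direct $C^2$ maximum principle in favour of the Chen--He integral method, and the paper does the same: Theorem~\ref{main-thm-2} establishes $\int_X(\tr_{\omega_\b}\omega_\vp)^p\omega_\b^n<\infty$ for all $p$ by iterated integration by parts with a cut-off $\theta_\ep$ supported away from $D$, and the $L^\infty$ bound is then extracted via a conic Sobolev inequality and Moser iteration (Theorem~\ref{main-thm-3}). You have correctly flagged that the bisectional curvature of $\omega_\b$ near $D$ is the obstacle, but the paper's remedy is not absorption into Aubin--Yau positivity: it is to switch to Donaldson's metric $\omega_D$, whose bisectional curvature growth is controlled by the Guenancia--P\u{a}un weight $\Psi_\g$, and to carry the Ricci potential $h=\log(\omega_\b^n/\omega_D^n)$ along as an extra term. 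Finally, the paper does not regularise $\omega_\b$ by smooth K\"ahler forms $\omega_{\b,\ep}$ in the a priori estimates: it works directly on $X\sm D$ with the conic solution, using the cut-offs $\theta_\ep$ to justify integration by parts; smooth approximation is used only in establishing regularity of the auxiliary function $\psi_1$.
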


The $C^0$ estimate is proved in Theorem (\ref{main-thm-001}) and Corollary (\ref{c0-cor-001}),
and the non-degneracy estimate is proved in Lemma (\ref{nd-lem-001}). 
Comparing with Chen-Cheng's work \cite{CC1}, the new difficulty is that 
Alexandrov's maximum principle(AMP) fails in the conic case. More precisely, 
the constant appearing in Chen-Cheng's estimate depends on the diameter of the coordinate ball,
on which we applied this maximum principle. 
However, the diameter has to become smaller and smaller when the ball is approaching the divisor,
and then we lose the control of the constant. 

In order to overcome this difficulty, we developed a new version of AMP, the \emph{Generalised Alexandrov's maximum principle}(GAMP) in Theorem (\ref{amp-thm-001}). 
The key observation is that this maximum principle still works for a function $u$ 
if the upper contact set $\tg^+_u$ of this function is completely disjoint from the singular locus of $u$.
Therefore, we can utilise this new maximum principle in the estimates, 
by adding an extra ``extremely" pseudo-convex auxiliary function near the divisor.


The integral method on compact manifold (iteration without assuming uniform Sobolev constant on varing metrics) 
from Chen-He \cite{CH} is important in Chen-Cheng's work. 
Following this basic frame work, the gradient $F$-estimate and the $C^2$-estimate in the conic setting are also proved via 
the following $W^{2,p}$ type estimate. 
\begin{theorem}[Theorem (\ref{main-thm-2})]
\label{intr-thm-002}
Let $(\vp, F)$ be a $\cC^{2,\a,\b}$-conic cscK pair on $(X,D)$.
 For any $1< p < +\infty$,
there exists a uniform constant $C'$ such that 
$$ \int_X (\tr_{\omega_\b}\omega_\vp)^p \omega_\b^n < C'. $$
Here this constant $C'$ depends on $p$, $|| \vp ||_0$, $|| F ||_0$ and conic background metrics $\omega_\b, \omega_D$ on $(X,D)$.
\end{theorem}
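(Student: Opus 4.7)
The plan is to follow the integral / iteration scheme of Chen--Cheng \cite{CC1}, adapted to the conic setting. The input is the uniform bound on $\vp$ and the two-sided bound on $F$ provided by parts (i) and (ii) of Theorem \ref{intr-thm-001}; the output is the $L^p$ bound on $u:=\tr_{\omega_\b}\omega_\vp$. The strategy is to derive a Chern--Lu/Yau-type differential inequality for $u$, test it against $u^{p-1}$, and integrate against $\omega_\vp^n = e^F\omega_\b^n$, whose total volume is comparable to $\int_X\omega_\b^n$ thanks to the $L^\infty$-bound on $F$.

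In more detail, on $X\sm D$ the background metric $\omega_\b$ is smooth, and the standard Chern--Lu computation produces an inequality of the form
$$ \Delta_\vp \log u \;\geq\; -A\,\tr_{\omega_\vp}\omega_\b + \frac{\Delta_{\omega_\b} F}{u} + \text{(l.o.t.)}, $$
where $A$ bounds the bisectional curvature of $\omega_\b$ away from $D$, which is known to be controlled in the conic setting (Campana--Guenancia--P\u{a}un, Jeffres--Mazzeo--Rubinstein). Using the scalar curvature equation to rewrite $\Delta_{\omega_\b}F$, multiplying by a suitable test power of $u$ and integrating against $\omega_\vp^n$, one expects an integral inequality coupling $\int_X u^{p+1}\omega_\b^n$, $\int_X u^p\omega_\b^n$, and gradient terms in $u$. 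Following the Chen--He integral iteration \cite{CH} adopted in Chen--Cheng, which crucially avoids any appeal to a uniform Sobolev constant for $(X,\omega_\vp)$, the gradient terms can be absorbed via the pointwise identity $u\cdot\tr_{\omega_\vp}\omega_\b \geq 1$ together with the $C^0$ bounds of Theorem \ref{intr-thm-001}. Bootstrapping in $p$ from the base case $\int_X u\,\omega_\b^n = n\,[\omega_\vp]\cdot[\omega_\b]^{n-1}$ should then deliver the claim for every $p<\infty$.

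The main obstacle will be justifying the integration by parts across the singular divisor $D$. Since $\omega_\b$ has cone singularities and $\vp$ is only $\cC^{2,\a,\b}$, one cannot integrate over $X$ naively; one must introduce cutoffs $\chi_\ep$ vanishing on a tubular neighbourhood $\{|s_D|<\ep\}$ and show that every boundary contribution vanishes as $\ep\to 0$. This reduces to the quantitative estimate $\int_X |\nabla\chi_\ep|^2_{\omega_\b}\,\omega_\b^n \to 0$, manageable because the conic volume form assigns only integrable weight to neighbourhoods of $D$; compatibility with the $\cC^{2,\a,\b}$ regularity of $\vp$ has to be checked term by term. A secondary subtlety is that Theorem \ref{intr-thm-002} is precisely the stepping stone toward Theorem \ref{intr-thm-001}(iii); accordingly, any $L^2$-type control of $\nabla F$ entering the estimate must be produced intrinsically, by pairing the scalar curvature equation against $F$ itself, so as to avoid any circular use of (iii).
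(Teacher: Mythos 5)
Your high-level strategy (a Chern--Lu type differential inequality for the trace, testing against powers, integrating against $\omega_\vp^n$, cutting off near the divisor, and bootstrapping in $p$ from the base case $\int(n+\Delta\vp)=n$) is the same skeleton as the paper's, and you have correctly identified the two delicate points -- the integration by parts across $D$ and the danger of circularity with the gradient estimate. However, there is a genuine gap at the core of your argument: you assert that the bisectional curvature of $\omega_\b$ is controlled ``in the conic setting (Campana--Guenancia--P\u{a}un, Jeffres--Mazzeo--Rubinstein).'' This is precisely what the paper says is \emph{not} available. The conic Calabi--Yau metric $\omega_\b$ has no known bisectional-curvature lower bound near a normal-crossing divisor; the CGP/JMR results concern the Donaldson-type metrics $\omega_D$ and smooth approximations thereof, and even there the lower bound is not a constant $-A$ but of the form $R_{i\bar i j\bar j}(\omega_D)\geq -(C+\Psi_{\g,i\bar i})$, where $\Psi_\g=C\sum_k|s_k|^{2\g}$ is a conic weight whose complex Hessian blows up near $D$. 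If you run Chern--Lu naively with a constant $A$, the differential inequality fails near $D$, and if you keep the genuine $\Psi_{\g,i\bar i}$ term it is not absorbed by any of the terms you list, so the integral iteration does not close.

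The paper handles this with two ingredients you are missing. First, it switches the background metric from $\omega_\b$ to Donaldson's metric $\omega_D$ (using quasi-isometry to transfer the conclusion back to $\omega_\b$), because only for $\omega_D$ is the needed curvature inequality available. Second -- and this is the key trick, borrowed from Guenancia--P\u{a}un -- it inserts the conic weight $\Psi_\g$ into the exponential test function, taking $v=e^{A}(n+\Delta\vp)$ with $A=-\k(F+C\vp)+(\k+1)\Psi_\g$; the extra factor $e^{(\k+1)\Psi_\g}$ generates a $\Delta_\vp\Psi_\g$ term that exactly absorbs the bad $\Psi_{\g,i\bar i}$ coming from the curvature bound (via the inequalities (\ref{w2p-002})--(\ref{w2p-003})). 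A further point you do not address: changing to $\omega_D$ replaces $F$ by $F+h$ with $h=\log(\omega_\b^n/\omega_D^n)$, which is bounded but not in $\cC^{2,\a,\b}$, and $dd^c h$ is again only controlled up to a $\Psi_\g$-type term -- another place the weight is essential. Finally, the $\Delta F$ term is handled by a second integration by parts (as in Chen--Cheng) rather than by any $L^2$ control of $\nabla F$, so your worry about circularity is resolved differently than you suggest. Your cutoff mechanism is conceptually correct, though the paper uses the log-log cutoff $\theta_\ep$ of Berndtsson and the convergence (\ref{w2p-007})--(\ref{w2p-0070}) rather than a tubular-neighbourhood cutoff; this choice matters because it makes $\int d\theta_\ep\wedge d^c\theta_\ep\wedge\Omega_\b^{n-1}\to 0$ for \emph{every} conic metric $\Omega_\b$, not just for the background one.
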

Here $\omega_D$ is another conic background metric introduced by Donaldson \cite{Don12},
and the constant $C'$ depends on both $\omega_\b$ and $\omega_D$. 
The reason is that we need to switch the background metrics during the proof of the $W^{2,p}$-estimate,
but this does no harm to our $L^p$-norm $|| n+ \Delta\vp ||_{L^p(\omega_\b^n)}$ 
since $\omega_D$ and $\omega_\b$ are quasi-isometric on $X$.

For later purpose, we assumed that the cscK pair $(\vp, F)$ lies in the conic H\"older space $\cC^{2,\a,\b}$. 
In practice, all these a priori estimates still hold, if we  only assume $(\vp, F)\in \cC^{1,\bar 1}_\b$(Definition (\ref{pe-defn-001})) in the very beginning.

In order to investigate the existence of the conic cscK metrics, we are further led to studying the following continuity path on $Y:=X\sm\text{Supp}(D)$:
$$ t(R_{\vp} - \ul{R}_\b) = (1-t) (\tr_{\vp}\tau_\b - \ul\tau_\b), $$
for $t\in [0,1]$. Here $\tau$ is a closed $(1,1)$ form varying in a fixed K\"ahler class. 
More precisely, we assumed 
$ \tau: = \tau_0 + dd^c f \geq 0$,
for some fixed smooth $(1,1)$ form $\tau_0$ on $Y$ with $|\tau_0|_{\omega_\b}$ uniformly bounded,
and the function $f$ satisfies
$$\sup_X f = 0; \ \ \  \ \ \ \int_X e^{-p_0f} \omega_\b^n < +\infty, \ \  \emph{for some $p_0>1$}.$$

With these constraints, a triple $(\vp, F, f)\in\cC^{2,\a,\b}$ is the solution to the twisted conic-cscK equation if they satisfy equations (\ref{tw-001}) and (\ref{tw-002}).
Then we extend our estimates to the following. 
\begin{theorem}
\label{intr-thm-003}
Let $(\vp, F, f)\in\cC^{2,\a,\b}$ be a triple of the twisted equations.
Suppose the entropy $H_\b(\vp) $ is bounded by a uniform constant $C$.
Then there exists another uniform constant $C''$, such that the following holds:
\begin{enumerate}
\item[(iv)] the $C^0$-estimate
$$|| \vp ||_0 < C'';$$

\item[(v)] the non-degeneracy estimate
$$  -C'' < F < C''; $$

\item[(vi)] there exists a constant $k_n$, only depending on the dimension $n$,
such that if $p_0 > k_n$, then we have 
$$
 |\nabla_\vp (F+f)|_\vp < C''.
$$
\end{enumerate}
\end{theorem}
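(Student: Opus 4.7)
The plan is to adapt the estimates of Theorems \ref{intr-thm-001}--\ref{intr-thm-002} to the twisted setting, treating $\tau = \tau_0 + dd^c f$ as a perturbation for which the missing $L^\infty$ bound on $f$ is replaced by the hypothesis $\int_X e^{-p_0 f}\,\omega_\b^n < +\infty$. From the continuity path together with (\ref{tw-001})--(\ref{tw-002}) one obtains a Monge--Amp\`ere equation $\omega_\vp^n = e^F\omega_\b^n$ coupled to a second-order equation in which the twist enters through $\tr_\vp \tau$, carrying the favourable sign $\tau\geq 0$. Throughout, contributions involving $-f$ are handled by $\sup_X f = 0$ and H\"older's inequality, so that they are controlled by the entropy $H_\b(\vp)$ together with $\log\int_X e^{-p_0 f}\,\omega_\b^n$.

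For (iv), I would rerun the proof of Theorem \ref{main-thm-001} with minimal modification, using the GAMP of Theorem \ref{amp-thm-001}. The only additional integrals that appear have the shape $\int_X(-f)\,\omega_\vp^n$ or $\int_X e^{-(1-t)f}\,\omega_\b^n$, and both are finite thanks to the hypothesis on $f$ combined with the entropy bound. The Moser iteration then closes as before to yield $\|\vp\|_0 < C''$.

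For (v), the upper bound on $F$ follows from the Monge--Amp\`ere equation and the $C^0$ control of $\vp$ via a Kolodziej-type $L^\infty$ estimate in the conic setting, fed by the entropy. For the lower bound on $F$ I would apply the maximum principle to the second equation of the twisted system at an interior minimum: the extra trace term $\tr_\vp\tau$ is non-negative and may be discarded, reducing the problem to an estimate depending only on $\ul{R}_\b$, $\ul\tau_\b$, $|\tau_0|_{\omega_\b}$, and the already bounded $\vp$.

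The main obstacle is (vi). The natural shift from $F$ to $F+f$ is dictated by the identity $\ddbar(F+f) = \Ricci(\omega_\b) - \Ricci(\omega_\vp) + \tau - \tau_0$, which cancels the highest-order twist terms. Following Chen--Cheng, I would compute $\Delta_\vp$ of a test function of the form $e^{-\la(F+f)}|\nabla_\vp(F+f)|_\vp^2$ together with a barrier built from $\vp$ and $n+\Delta\vp$, arriving at a differential inequality whose remainder is controlled by $\tr_\vp \tau_0$ and $|\nabla_\vp f|_\vp^2$. A Moser iteration in the style of Theorem \ref{intr-thm-002} then applies, but the Sobolev embedding at each stage now requires a weighted integral of $e^{-f}$; tracking the successive loss of integrability exponents produces the dimensional threshold $k_n$, and the delicate step is to guarantee that the Lebesgue class needed at the final iteration stays below $p_0$. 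I expect this book-keeping to be where most of the technical effort lies.
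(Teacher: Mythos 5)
Your overall framework -- shift from $F$ to $F+f$, run GAMP for the zero-order bounds, and handle the missing $L^\infty$ bound on $f$ through the hypothesis $\int_X e^{-p_0 f}\omega_\b^n<\infty$ via H\"older -- is the right one and matches the paper's approach. But two points in the execution as you describe it would break down.

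First, your assertion that $\tau\geq 0$ carries the ``favourable sign'' and that (iv) follows by rerunning Theorem \ref{main-thm-001} ``with minimal modification'' is incorrect as stated. If the barrier is $A=F+\ep\psi_1+\ep\psi_2-\lambda\vp$ (as in the untwisted case), then the maximum-principle step requires a lower bound on $\Delta_\vp A$, and $\Delta_\vp F = \tr_\vp(\Theta-\tau)-R$ contributes $-\tr_\vp\tau\le 0$, a term with no lower bound because $\tau$ has no a priori upper bound. This is precisely the obstruction the twist creates, and ``both integrals are finite'' does not save it. The paper's fix is the one you only announce for (vi): include $f$ in the barrier, i.e.\ take $A=F+f+\ep_0\psi_1+\ep\psi_2-\lambda\vp$, so that by equation (\ref{tw-003}) one has $\Delta_\vp(F+f)=\tr_\vp(\Theta-\tau_0)-R$, with $\tau_0$ bounded. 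The shift must be made already in the $C^0$ estimate, not introduced only at the gradient stage, and positivity of $\tau$ is not what makes the $C^0$ argument go through. (One also needs $f\le 0$, $\psi_1\le 0$, $|\psi_2|\le 1$ to bound $e^{n\delta A}$ inside the GAMP integral by $e^{C F}e^{-\a\vp}$.)

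Second, in (v) the quantity that the paper actually controls is $\|F+f\|_0$, not $\|F\|_0$: Lemma \ref{tw-lem-001} and Corollary \ref{tw-cor-001} give an upper bound on $F+f$, and Lemma \ref{tw-lem-002} (again via GAMP with $A_3=-F-f-\lambda\vp+\ep\psi_2$) a lower bound on $F+f$, which in turn bounds $F$ from below since $f\le 0$. The Kolodziej-type argument you invoke is used to bound $\|\vp\|_0$ and $\|\psi_1\|_0$ from the $L^p$ control on $e^{F}$ (equations (\ref{tw-008})--(\ref{tw-009})), not to produce an upper bound on $F$; and an upper bound on $F$ alone (as opposed to $F+f$) does not follow from the estimates, since $-f$ is unbounded. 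All the higher-order statements in Section 7 are phrased in terms of $\|F+f\|_0$ for this reason.

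For (vi), your sketch is essentially the paper's: set $W=F+f$, compute $\Delta_\vp\bigl(e^{W/2}|\nabla_\vp W|^2_\vp\bigr)$ (the paper uses $e^{W/2}$, not $e^{-\lambda W}$, but this is immaterial), integrate by parts against the cut-off $\theta_\ep$, close via the conic Sobolev inequality (Lemma \ref{c2-lem-sob}) and Moser iteration, and feed the iteration with the twisted $W^{2,p}$ estimate (Theorem \ref{tw-thm-001}). You correctly identify that the dimensional threshold $k_n$ arises from tracking how the weighted $(n+\Delta\vp)$ integrals lose exponent against $e^{-f}$; the paper's bookkeeping gives $p_0\ge 8n(2n-1)+1$.
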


Since the upper bound of the $(1,1)$ form $\tau$ is out of control in the twisted case, 
we no longer expect the $C^2$-estimate directly. However, the $C^2$-estimate can be actually deduced from the gradient $F$-estimate, 
by a conic version of Chen-He's integral estimate.

Similarly, the gradient $F$-estimate is proved via the following $W^{2,p}$-estimate. 
\begin{theorem}[Theorem (\ref{tw-thm-001})]
\label{intr-thm-004}
Let $(\vp, F, f)\in\cC^{2,\a,\b}$ be a triple of the twisted equations.
For any $p\geq 1$ there exists a constant $\hat C$ satisfying 
$$ \int_X e^{-(p-1)f} (\tr_{\omega_\b} \omega_\vp)^p \omega_\b^n \leq \hat C, $$
Here the constant $\hat C$ depends on $p$, $p_0$(uniform if $p_0$ is bounded away from $1$), 
$||\vp||_0$, $|| F + f ||_0$, and background metrics $\omega_\b$ and $\omega_D$.
\end{theorem}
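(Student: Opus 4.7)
The plan is to adapt the integral framework of Theorem (\ref{intr-thm-002})/Theorem (\ref{main-thm-2}) to the twisted setting, where the new feature is the presence of $\tau = \tau_0 + dd^c f$ with $f$ unbounded below and only controlled via $\int_X e^{-p_0 f}\omega_\b^n < +\infty$ and $\sup_X f = 0$. The weight $e^{-(p-1)f}$ appearing in the conclusion is chosen precisely so that, after integration by parts, it absorbs the unbounded contribution of $dd^c f$ carried by $\tau$.

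Set $u := \tr_{\omega_\b}\omega_\vp = n + \Delta\vp$. First, on the regular locus $Y = X\setminus\text{Supp}(D)$, the Aubin--Yau/Chern--Lu inequality yields
$$\Delta_\vp\log u \geq -A\,\tr_\vp\omega_\b - \frac{\tr_\vp \Ricci(\omega_\b)}{u},$$
where $A$ bounds the holomorphic bisectional curvature of the conic background $\omega_\b$, finite on $Y$. Writing $\omega_\vp^n = e^{F+f}\omega_\b^n$ and invoking the twisted equations (\ref{tw-001})--(\ref{tw-002}), one reorganises $\tr_\vp\Ricci(\omega_\b) = -\Delta_\vp(F+f) + R_\vp$ and expresses $R_\vp$ through the continuity path as $\ul R_\b - \frac{1-t}{t}(\tr_\vp\tau_\b - \ul\tau_\b)$; the term $\tr_\vp\tau$, which contains the potentially unbounded $dd^c f$, is the one that must be tamed by the weight.

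Second, multiply the resulting inequality by a test function of the form $u^{p-1}e^{-(p-1)f}$ and integrate against $\omega_\vp^n$ on an exhaustion $X\setminus U_\ep \Subset Y$. Integration by parts on the left produces the positive quadratic form $\tfrac{4(p-1)}{p^2}\int |\nabla_\vp u^{p/2}|_\vp^2 e^{-(p-1)f}\omega_\vp^n$, together with a cross term of the shape $-(p-1)\int u^{p-1}\langle\nabla_\vp u,\nabla_\vp f\rangle_\vp e^{-(p-1)f}\omega_\vp^n$. The cross term combines, after one further integration by parts in the $f$ direction, with the $\Delta_\vp f$ piece coming from $\Delta_\vp(F+f)/u$ on the right, cancelling the leading $dd^c f$ contribution up to a remainder controlled by $\|F+f\|_0$, $\|\vp\|_0$, $\|\tau_0\|_{\omega_\b}$, and H\"older applied to $\int_X e^{-p_0 f}\omega_\b^n$, the hypothesis $p_0 > 1$ providing the required integrability buffer. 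The cutoff contribution on $\d U_\ep$ vanishes as $\ep\to 0$ by the $L^\infty_\mathrm{loc}(Y)$ estimate from the conic H\"older assumption and the fact that $\omega_\b^n$ puts no mass on $D$.

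Finally, once the weighted $L^2$ gradient bound on $u^{p/2}e^{-(p-1)f/2}$ is secured, the conic Sobolev inequality on $(X,\omega_\b)$---after switching between $\omega_\b$ and $\omega_D$ via their quasi-isometry, exactly as in the proof of Theorem (\ref{intr-thm-002})---boosts the integrability of $u^p e^{-(p-1)f}$, and the Chen--He compact-manifold iteration closes the bound for every finite $p \geq 1$. I expect the main obstacle to be the careful bookkeeping of the weight: since $f$ is only controlled through the integrability of $e^{-p_0 f}$, one must verify that the specific exponent $p-1$ in $e^{-(p-1)f}$ renders the integration-by-parts identity \emph{exact} in its leading $dd^c f$ dependence, so that only lower-order remainders---which the hypothesis $p_0 > 1$ can afford via a single application of H\"older---survive. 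A secondary difficulty is the uniform choice of cutoffs $U_\ep$ valid across the iteration for large $p$; the extreme pseudoconvex auxiliary function near $D$ underlying the Generalised Alexandrov Maximum Principle of Theorem (\ref{amp-thm-001}) provides the required flexibility.
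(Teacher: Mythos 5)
Your proposal captures the broad outline of the method (weighted $L^p$ bootstrap, integration by parts with a cutoff near $D$, absorbing the $dd^cf$ contribution into the weight), but it fails at several of the points on which the paper's proof actually turns.

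The central gap is the curvature control. You assert that ``$A$ bounds the holomorphic bisectional curvature of the conic background $\omega_\b$, finite on $Y$'' and take the Chern--Lu inequality directly with respect to $\omega_\b$. But a \emph{pointwise} finite curvature on $Y$ is useless: what is needed is a uniform bound near $D$, and the paper is explicit that the growth of $R_{i\bar i j\bar j}(\omega_\b)$ near a normal crossing divisor is not understood. This is precisely why the proof of Theorem~(\ref{tw-thm-001}) first passes from $(\psi, G, f)$ over $\omega_\b$ to $(\vp, F, f)$ over Donaldson's metric $\omega_D$, and then uses the Guenancia--P\u aun weight $\Psi_\gamma$, whose complex Hessian dominates the blow-up of $R_{i\bar i j\bar j}(\omega_D)$ by (\ref{w2p-002}); the auxiliary function $A(\vp,F,f) = -\kappa(F+\delta f + C\vp)+(\kappa+1)\Psi$ has $\Psi$ precisely for this purpose. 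You mention the $\omega_\b\leftrightarrow\omega_D$ quasi-isometry only for the Sobolev inequality at the end, where it is not the issue; the switch and the weight $\Psi_\gamma$ are needed already in the differential inequality (\ref{tw-017}).

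Two further steps go in the wrong direction. First, the sign of the weight: the actual Theorem~(\ref{tw-thm-001}) proves $\int_X e^{(p-1)f}(n+\Delta_\b\psi)^p\omega_\b^n \le C_p$, with $+(p-1)$; since $f\le 0$ this is a \emph{weaker} estimate than the unweighted one, consistent with Corollary~(\ref{tw-cor-001}), which extracts the unweighted bound only for $1<q<p_0$. The cancellation you describe--- ``the specific exponent $p-1$ in $e^{-(p-1)f}$ renders the integration-by-parts identity exact in its leading $dd^cf$ dependence''---does not occur with the sign you chose; in the paper it is the choice $\delta=(\kappa-1)/\kappa$ inside $A$ that lines up the $\Delta f$ contributions (see (\ref{tw-019})), and this yields the decaying weight $e^{(p-1)f}$, not your $e^{-(p-1)f}$. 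Second, the iteration scheme for the $W^{2,p}$ estimate is not a Moser/Sobolev iteration and does not use Lemma~(\ref{c2-lem-sob}); it is a direct recursion gaining $\tfrac{1}{n-1}$ in the exponent each step via $(n+\Delta\vp)\le e^F(\mathrm{tr}_\vp g)^{n-1}$, closed by an induction whose base case is $p=1+\tfrac{1}{n-1}$, established by letting $p_i\searrow1$ and using dominated convergence together with $\int_Y(n+\Delta\vp)\omega_D^n=n$. The Sobolev/Moser machinery is reserved for the later gradient and Laplacian estimates. Finally, the cutoff $\theta_\ep$ used for the integration by parts is not produced by GAMP or by the extremely pseudoconvex auxiliary function (those serve only the $C^0$ and non-degeneracy estimates); it is the Berndtsson log-log cutoff built from $\rho=\log(-\log\prod_k|s_k|^2)$, whose defining property (\ref{w2p-007}) is that its Dirichlet energy against any conic metric vanishes as $\ep\to0$.
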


More generally, our zero order estimates (the $C^0$ and non-degeneracy estimates) can be also used on singular klt pairs.
In fact, after pulling back to a log-resolution, the metric has 
conic singularities along normal crossing divisors, but it is possibly degenerate along some exceptional divisors. 
Therefore, we can apply our tricks on the resolution, and then the estimates follow from GAMP again. 

Furthermore, the higher order estimates on singular klt pairs, like the $W^{2,p}$-estimate and $C^2$-estimate, 
can also be realised on a compact domain away from the divisor. These topics will be discussed in a sequel paper.

$\mathbf{Acknowledgement}$: 
We want to show our great thanks to Prof. Xiuxiong Chen, for he introduced this problem to us.
The first author is grateful to Prof. Donaldson for sharing his beautiful insights on conic K\"ahler geometry,
and he would also like to thank Prof. Demailly, Prof. M. P\u aun, Prof. Berndtsson, Prof. S. Boucksom, and Prof. Guedj 
for lots of useful discussions and continuous  encouragement.
The second author would like to thank Prof. Besson for his encouragement.

The first author is supported by the ERC-ALKAGE project.
The second author is supported by the ERC-GETOM and ANR-CCEM projects.
The third author has received funding from the European Union's Horizon 2020 research and innovation programme under the Marie Sklodowska-Curie grant agreement No. 703949.

\section{Preliminary }
\label{sec-002}
Let $(X,\omega)$ be an $n$-dimensional compact complex K\"ahler manifold.
Suppose $D:= \sum_{k=1}^d (1-\b_k) D_k$ is an $\mathbb{R}$-divisor on $X$ 
with simple normal crossing support such that the angle $\b_k\in (0,1)$ for all $k$.
Then $(X,D)$ is called as a \emph{log smooth klt pair}. 

Near a point $p$ on the support of $D$, there exists a holomorphic coordinate system $\{z_i\}$ such that  
the support $\text{Supp}(D)$ is defined by the equation $\{z_1\cdots z_d =0\}$.
Then a model conic metric $\omega_{cone}$ with cone angle $\b_k$ along $D_k$ can be written as 
\begin{equation}
\label{p-001}
\omega_{cone}: = \sum_{k=1}^d \frac{\sqrt{-1} dz_k \wedge d\bar z_k}{|z_k|^{2-2\b_k}} + \sum_{k=d+1}^n \sqrt{-1} dz_k\wedge d\bar z_k.
\end{equation}

A positive current $\omega_\vp: = \omega + dd^c\vp $ is a conic K\"ahler metric with cone angle $\b_k$ along $D_k$,
if it is smooth on $X\sm (\bigcup D_k)$ and quasi-isometric to the model metric $\omega_{cone}$ near each point $p\in \text{Supp}(D)$,
i.e. it satisfies 
$$ C^{-1}\omega_{cone} \leq \omega_{\vp} \leq C \omega_{cone}, $$
for some constant $C>0$.

When the divisor $D$ is a smooth hypersurface, Donaldson \cite{Don12} introduced the conic H\"older spaces for the potential 
like $\vp\in\cC^{0,\a,\b}$ or  $\vp\in\cC^{2,\a,\b}$, for some constant $\a\in(0,1)$ with $ \a\b < 1-\b $.
Moreover, he proved a version of the Schauder estimate(\cite{Don12}, \cite{Brendle}) for the conic Laplacian operator. 

\subsection{Conic K\"ahler-Einstein metrics}
Let $(L_k, \phi_k), 1\leq k \leq d $ be a set of hermitian  line bundles, with non-trivial sections $s_k\in H^0(X, L_k)$.
Assume the divisors $D_k: = \{s_k =0 \}$ are smooth, and they have strictly normal intersections. 
For simplicity, we write the norm of the sections as $|s_k|^2: = |s_k|^2 e^{-\phi_k}$.
Then a simple example of conic K\"ahler metrics, the \emph{Donaldson metric},
can be written as 
$$ \omega_D: = \omega +   \frac{1}{N} \sum_{k=1}^d dd^c |s_k|^{2\b_k}, $$
for some $N>0$ large. 

This example has been widely used as the background metric in the study of conic geometric equations.
In fact, there exists a natural smooth approximation of $\omega_D$ as 
$$ \omega_{D, \ep}: = \omega + \frac{1}{N} \sum_{k=1}^d dd^c (|s_k|^2 + \ep^2)^{\b_k},$$
for every $\ep >0$ small. However, the holomorphic bisectional curvature of this approximation $R_{i\bar i j \bar j}(\omega_{D,\ep})$
grows too fast along certain directions near the divisor. 
Therefore, Campana-Guenancia-P\u aun \cite{CGP13} and Guenancia-P\u aun \cite{GP}
introduced another smooth approximation as 
$$ \tilde\omega_{D,\ep}: = \omega + \frac{1}{N} \sum_{k=1}^d dd^c \chi_k (|s_k|^2 + \ep^2), $$
where the auxiliary function $\chi_k(\ep^2+t)$ is a smooth perturbation of the function $(\ep^2 +t)^{\b_k}$.
This is a ``better" choice in the sense that the bisectional curvature $R_{i\bar i j\bar j}(\tilde\omega_{D,\ep})$ 
has a slower growth rate near the divisor. 

In the work \cite{CGP13} and \cite{GP}, they studied the regularities of the so called \emph{conic K\"ahler-Einstein}(KE) metrics as 
\begin{equation}
\label{cke-001}
(\omega + dd^c\vp)^n = e^{f+\lambda \vp}d\mu_D,
\end{equation}
where $\lambda = \{-1,0,1\}$, $f\in C^{\infty}(X)$, and the measure $\mu_D$ is defined by 
$$d\mu_D: = \frac{\omega^n}{\prod_{k=1}^d |s_k|^{2-2\b_k}}.$$
Here $e^f d\mu_D$ is a probability measure if $\lambda=0$. 
They actually proved that an $L^{\infty}$ solution $\vp$ of equation (\ref{cke-001}) is  always in the space $\cC^{2,\a,\b}$.

For the case $\lambda \geq 0$,
by the celebrated work of Yau \cite{Yau} on Calabi's conjecture, 
there always exists a smooth approximation for the conic KE metric as follows
\begin{equation}
(\omega + dd^c\vp_{\ep})^n = \frac{e^{\lambda\vp_{\ep} + f + c_{\ep}} \omega^n}{\prod_{k=1}^d (|s_k|^2 + \ep^2)^{1-\b_k}},
\end{equation}
for some uniformly bounded constant $c_{\ep}$.

According to the expansion formula of the conic KE metric (\cite{YZ}, \cite{JMR}),
the bisectional curvature $R_{i\bar i j\bar j} (\omega_{\vp})$ ($R_{i \bar i j\bar j}(\omega_{\vp_\ep})$) 
behaves even better than $R_{i\bar i j\bar j} (\omega_{D})$ ($R_{i\bar i j\bar j} (\tilde\omega_{D,\ep})$) near the divisor,
when the divisor is smooth.
As inspired from the third author's previous work \cite{Zheng1},
we will use a special conic KE metric as the background metric.

\subsection{Conic cscK metrics}

For $\lambda =0$, there always exists a solution $\omega_\b: = \omega + dd^c\psi_\b$ for the \emph{conic Calabi-Yau} equation as 
\begin{equation}
\label{csck-000}
( \omega + dd^c\psi_\b)^n = \frac{e^{f} \omega^n }{\prod_{k=1}^d |s_k|^{2-2\b_k}},
\end{equation}
where $\b:= (\b_1,\cdots, \b_k)$ is a collection of angles.  
In other words, it solves the following geometric equation 
\begin{equation}
\label{csck-001}
Ric (\omega_\b) =  \Theta + \sum_{k=1}^d (1-\b_k) [D_k],
\end{equation}
where $\Theta$ is a smooth closed $(1,1)$ form on $X$ defined by 
$$\Theta: =  -dd^c f + Ric(\omega) - \sum_{k=1}^d (1-\b_k) dd^c\phi_k. $$

Let $\omega_{\vp}: = \omega_\b + dd^c\vp $ be a conic K\"ahler metric 
with cone angle $\b_k$ along each $D_k$. Suppose this conic metric is a solution of the following two coupled equations
\begin{equation}
\label{csck-002}
(\omega_{\b} + dd^c\vp)^n = e^{F} \omega_{\b}^n,
\end{equation}
\begin{equation}
\label{csck-003}
\Delta_{\vp}F= - \underline{R}_\b + tr_{\vp} \Theta,
\end{equation}
where $\underline{R}_{\b}$ is a topological constant depending on the angle $\b$,
and we assume the normalisation $\int_X e^F \omega_{\b}^n = 1$.
Observe that the solution $\omega_{\vp}$ has constant scalar curvature ($ R_{\vp} = \underline{R}_{\b}$),
outside the support of the divisor $D$.  

If the solution $\vp$ is in the space $\cC^{2,\a,\b}$,
then $F$ is in $\cC^{0,\a,\b}$ by equation (\ref{csck-002}).
When the divisor $D$ is smooth,
we further have $F\in\cC^{2,\a,\b}$ by equation (\ref{csck-003}) and Donaldson's Schauder estimate. 
Therefore, it makes sense to assume that $\vp$ and $F$ always have the same regularities in general. 

\begin{defn}
\label{csck-defn-001}
A pair of functions $(\vp, F)$ is called a $\cC^{2,\a,\b}$-conic cscK pair on $(X,D)$, 
if the potential $\vp$ is in the space $\cC^{2,\a,\b}$,
and its associated K\"ahler metric $\omega_{\vp}$ (with cone angle $\b_k$ along each $D_k$)
satisfies the coupled equations 
(\ref{csck-002}), (\ref{csck-003}) on $X\sm (\bigcup D_k)$, with $F\in \cC^{2,\a,\b}$.
\end{defn}

Since we switch the background metric to $\omega_{\b}$, 
the potential space has a one-one correspondence with the previous one, i.e. $\tilde\vp: = \psi_\b + \vp$.
However, the conic H\"older space is unchanged, since $\psi_{\b}$ is also in the space $\cC^{2,\a,\b}$ (\cite{GP}). 
Therefore, we can stick to this new potential space as the collection of 
all $\omega_\b$-plurisubharmonic functions with $\cC^{2,\a,\b}$ regularities.

When the divisor is smooth, the higher regularities have been 
known in \cite{LZ} for $0< \b < \frac{1}{2}$, 
and in (\cite{YZ}, \cite{Zheng}) for any angles.
There we used the model cone metric as our background metric. 
However, this is not an issue, since both potential of $\omega_D$ and $\omega_{\b}$ have higher regularities.

\section{Generalised Alexandrov's maximum principle}
Let $\Omega$ be a bounded open domain in $\bR^n$, with smooth boundary $\d\Omega = \ol{\Omega}\bigcap (\bR^n \sm \Omega)$.
Let $L$ be a second order differential operator:
$$L = \sum_{i,j=1}^n a_{ij}(x) D_{ij} + \sum_{i=1}^n b_i(x) D_i + c(x),$$
with $a_{ij}\in L^{\infty}_{loc}(\Omega)$ and $b_i, c\in L^{\infty}(\Omega)$.
Moreover, we assume $a_{ij} = a_{ji}$. 

The operator $L$ is called \emph{elliptic} on $\Omega$ if for every $x\in\Omega$ there exists $\lambda(x) >0$, such that 
$$ \sum_{i,j=1}^n a_{ij} \xi_i\xi_j \geq \lambda(x) |\xi|^2,$$
for all $\xi\in \bR^n$. 
Moreover, for elliptic operator $L$ one defines 
$$ \mathfrak{D}^*: = (\det (a_{ij}))^{1/n}. $$

For any continuous function $u$ on the set $\ol{\Omega}$, 
we can introduce the \emph{upper contact set} of $u$, 
which is roughly speaking the set of points in $\Omega$ that have a tangent plane above the graph of $u$. 
\begin{defn}
\label{amp-defn-001}
For any $u\in C(\ol{\Omega})$, the upper contact set $\tg^+$ is defined by 
$$\tg^+:= \left\{ y\in \Omega;\  \exists\  p_y\in \bR^n\ \emph{such that}\ \forall x\in \Omega:  \  u(x) \leq u(y) + p_y\cdot (x-y) \right\} $$
\end{defn}

The set $\tg^+$ is relatively closed in $\Omega$.
If $u\in C^1(\Omega)$, then $p_y = \nabla u (y)$ for any $y\in \tg^+$.

Moreover, if $u\in C^2(\Omega)$, then the Hessian matrix $(D_{ij}u)$ is semi-negative on $\tg^+$.
In other words, the set $\tg^+$ consists of all ``concave points" of $u$.

Then we invoke Alexandrov's maximum principle(AMP) as follows (\cite{GT}, \cite{Sweers}). 
\begin{theorem}
\label{amp-thm-001}
Let $\Omega$ be bounded and $L$ elliptic with $c\leq 0$. Suppose that $u\in C^2(\Omega)\bigcap C(\ol{\Omega})$ satisfies $Lu\geq f$ with
$$ \frac{|b|}{\mathfrak{D}^*}, \frac{f}{\mathfrak{D}^*}\in L^n(\Omega), $$
and then one has 
\begin{equation}
\label{amp-001}
\sup_{\Omega} u \leq \sup_{\d\Omega} u^+ + C\cdot \text{diam}(\Omega) \cdot \left\Vert \frac{f^-}{\mathfrak{D}^*}\right\Vert_{L^n (\Gamma^+)},
\end{equation}
with
$$  C: = C \left( n, \left\Vert \frac{|b|}{\mathfrak{D}^*} \right\Vert_{L^n(\tg^+)} \right). $$
\end{theorem}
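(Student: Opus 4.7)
The plan is to adapt the classical Alexandrov--Bakelman--Pucci argument, arranged so that only the behavior of $u$ and of the drift coefficient $b$ on the upper contact set $\tg^+$ enters the final bound. Three ingredients drive the proof: a pointwise matrix AM--GM estimate on $\tg^+$, a geometric description of the image of $\tg^+$ under the gradient map, and the area/change-of-variables formula for that map.

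After replacing $u$ by $u-\sup_{\d\Omega}u^+$, I may assume $u\leq 0$ on $\d\Omega$; if $M:=\sup_\Omega u\leq 0$ there is nothing to prove, so I suppose $M>0$, attained at some interior $x_0$, and I restrict attention to the subset of $\tg^+$ on which $u\geq 0$. The key geometric lemma is then: for each $p\in\bR^n$ with $|p|<M/\text{diam}(\Omega)$, the translate $v(x):=u(x)-p\cdot(x-x_0)$ satisfies $v(x_0)=M$ while $v|_{\d\Omega}\leq |p|\cdot\text{diam}(\Omega)<M$, so $v$ attains its maximum at some interior $y$; unwinding the definition shows $y\in\tg^+$ with $\nabla u(y)=p$. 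Consequently $\nabla u(\tg^+)\supseteq B_{M/d}(0)$, where $d:=\text{diam}(\Omega)$. Combined with the area formula for $\nabla u:\tg^+\to\bR^n$ this yields $|B_{M/d}(0)|\leq \int_{\tg^+}\det(-D^2 u)\,dx$.

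Working pointwise on $\tg^+$, both $(a_{ij})$ and $-D^2 u$ are positive semidefinite, so matrix AM--GM gives $-a^{ij}D_{ij}u\geq n\mathfrak{D}^*\bigl(\det(-D^2 u)\bigr)^{1/n}$. Rearranging $Lu\geq f$, using $c\leq 0$ and $u\geq 0$ on $\tg^+$, yields $-a^{ij}D_{ij}u\leq f^- + |b|\,|\nabla u|$, hence
$$\bigl(\det(-D^2 u)\bigr)^{1/n}\leq \frac{f^- + |b|\,|\nabla u|}{n\mathfrak{D}^*}\qquad\text{on }\tg^+.$$
Combining,
$$|B_{M/d}(0)|\leq \int_{\tg^+}\left(\frac{f^- + |b|\,|\nabla u|}{n\mathfrak{D}^*}\right)^n dx.$$

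The main technical obstacle is the drift term $|b|\,|\nabla u|$, since $|\nabla u|$ is not a priori bounded on $\tg^+$. I would follow the refined Gilbarg--Trudinger trick: replace the linear normal map $y\mapsto\nabla u(y)$ by a nonlinear modification whose Jacobian carries a weight absorbing $(|b|/\mathfrak{D}^*)^n$ and whose image covers a radially reparametrized ball, indexed by a bounded nondecreasing function $\mu$ of $|p|$. The modified contact condition couples each $y$ to a slope $p$ with $|\nabla u(y)|\leq|p|$, so the $b$-contribution is converted into an integral of the form $\int_0^{M/d}\rho^{n-1}\,d\mu(\rho)$ controllable by $\|b/\mathfrak{D}^*\|_{L^n(\tg^+)}$ alone. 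Rearranging produces \eqref{amp-001} with $C=C(n,\|b/\mathfrak{D}^*\|_{L^n(\tg^+)})$; passing from the textbook $L^n(\Omega)$ bound on $b/\mathfrak{D}^*$ to the $L^n(\tg^+)$ bound is the essential refinement and the step requiring care, since this is precisely what will allow one later to localize the argument away from the conic singular locus.
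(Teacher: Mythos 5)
The paper does not prove this theorem: it invokes the classical Alexandrov--Bakelman--Pucci estimate as a citation to Gilbarg--Trudinger and Sweers, so there is no internal argument to compare against. Your reconstruction is sound through the pointwise matrix inequality on the upper contact set: the reduction $u\mapsto u-\sup_{\d\Omega}u^+$, the geometric claim $\nabla u(\tg^+\cap\{u>0\})\supseteq B_{M/d}(0)$ (proved by minimizing $t+p\cdot x-u(x)$ over $\ol{\Omega}$, exactly as the paper does in Lemma \ref{amp-lem-001}), the change-of-variables inequality $|B_{M/d}(0)|\leq\int_{\tg^+}\det(-D^2u)$, the AM--GM bound $-a^{ij}D_{ij}u\geq n\mathfrak{D}^*(\det(-D^2u))^{1/n}$, and the use of $c\leq0$ together with $u\geq0$ to get
$$\bigl(\det(-D^2u)\bigr)^{1/n}\leq\frac{f^-+|b|\,|\nabla u|}{n\mathfrak{D}^*}\quad\text{on }\tg^+\cap\{u\geq0\}$$
are all correct, and this is precisely the machinery the paper abstracts into Lemma \ref{amp-lem-001} and Corollary \ref{amp-cor-001}.

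Your last paragraph, however, misdescribes the device that absorbs the drift term, and this is the one step you cannot simply gesture at. There is no nonlinear modification of the normal map and no radially reparametrized ball: the classical argument keeps the map $\nabla u$ and instead weights the change-of-variables inequality, replacing $|B_{M/d}(0)|\leq\int_{\tg^+}|\det D^2u|$ by
$$\int_{B_{M/d}(0)}g(z)\,dz\;\leq\;\int_{\tg^+}g(\nabla u(x))\,\bigl|\det D^2u(x)\bigr|\,dx$$
for any nonnegative $g$, then choosing $g(z):=(|z|^n+\mu^n)^{-1}$ with $\mu:=\Vert f^-/\mathfrak{D}^*\Vert_{L^n(\tg^+)}$ --- the exact weight the paper itself deploys when it proves its Generalised AMP by reducing to Lemma \ref{amp-lem-001}. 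After expanding $(f^-+|b||\nabla u|)^n\leq 2^{n-1}\bigl((f^-)^n+|b|^n|\nabla u|^n\bigr)$, the two decisive cancellations are $g(\nabla u)\,|\nabla u|^n\leq1$, so the $|b|$-contribution is bounded by $\Vert|b|/\mathfrak{D}^*\Vert_{L^n(\tg^+)}^n$, and $g(\nabla u)\leq\mu^{-n}$, so the $f^-$-contribution is bounded by $1$. The left side is comparable to $\log\bigl(1+(M/(d\mu))^n\bigr)$, and solving for $M$ yields \eqref{amp-001} with $C=C(n,\Vert|b|/\mathfrak{D}^*\Vert_{L^n(\tg^+)})$. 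The $L^n(\tg^+)$ localization you flag as the essential refinement is not a separate step requiring care: it is automatic, since $\tg^+$ is the domain of integration from the start in the weighted area formula. I would rewrite your closing paragraph to make the weight $g$ and these two cancellations explicit, in place of the vague normal-map modification.
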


When the function $u$ is no longer $C^2$ in $\Omega$, the Alexandrov maximum principle fails in general. 
However, observe that the RHS of inequality ($\ref{amp-001}$) only concerns with integration on the set $\tg^+$!
Therefore, there is still some hope left, if the singular locus of $u$ completely misses the upper contact set.

\begin{lemma}
\label{amp-lem-001}
Let $g\in C(\bR^n)$ be a non-negative function and $u\in  C(\ol{\Omega}) \bigcap C^2(V)$.  
Suppose that $V$ is an open connected subset of $\Omega$, 
such that the upper contact set of $u$ satisfies
$$\tg^+ \subset V. $$
 Set $d:= \text{diam}(\Omega)$ and 
$$ M:= \frac{\sup_{\Omega} u - \sup_{\d \Omega}u} { d}. $$
Then we have
\begin{equation}
\label{amp-002}
\int_{B_M(0)} g(z)dV(z) \leq \int_{\tg^+} g(\nabla u(x)) |\det(D_{ij} u (x)) | dV(x).
\end{equation}
\end{lemma}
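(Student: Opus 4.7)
The plan is to mimic the classical derivation of Alexandrov's inequality, exploiting the hypothesis $\tg^+ \subset V$ to sidestep any analysis at the singular points of $u$. The argument decomposes into a purely geometric inclusion of images under the normal map, followed by the standard $C^1$ area formula applied on the smooth region $V$.

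First, I would dispose of the trivial case $M \leq 0$, in which the ball $B_M(0)$ has zero measure (or is empty) and the inequality is vacuous; hence assume $M > 0$. Since $u \in C(\bar\Omega)$, the supremum of $u$ on $\bar\Omega$ is attained at some point $z_0$, and the assumption $M > 0$ forces $z_0 \in \Omega$, since otherwise one would have $\sup_\Omega u \leq \sup_{\partial\Omega} u$.

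The core step is the geometric inclusion $B_M(0) \subset \nabla u(\tg^+)$. Fix $p \in \bR^n$ with $|p| < M$ and consider the affine perturbation $v(x) := u(x) - p \cdot x$ on $\bar\Omega$. For every boundary point $x \in \partial\Omega$, the Cauchy--Schwarz inequality and the definition of $M$ give
$$v(z_0) - v(x) \geq u(z_0) - \sup_{\partial\Omega} u - |p| \cdot d = (M - |p|) d > 0,$$
so the supremum of $v$ on $\bar\Omega$ is attained at some interior point $y_p \in \Omega$. By construction, $u(x) \leq u(y_p) + p \cdot (x - y_p)$ for all $x \in \Omega$, so $y_p \in \tg^+$ with supporting slope $p$. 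Since $\tg^+ \subset V$ and $u \in C^2(V)$, the function $u$ is smooth in a neighbourhood of $y_p$, and the tangent-plane condition forces $\nabla u(y_p) = p$, which proves the inclusion.

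Finally, I would invoke the $C^1$ area formula for the map $\nabla u : V \to \bR^n$. On the set $\tg^+$ the Hessian $D^2 u$ is negative semidefinite, so the Jacobian of $\nabla u$ equals $\det(D_{ij} u)$ with absolute value $|\det(D_{ij} u)|$. The area formula, combined with the nonnegativity of $g$ and the inclusion above, then yields
$$\int_{\tg^+} g(\nabla u(x)) |\det(D_{ij} u(x))| dV(x) \geq \int_{\nabla u(\tg^+)} g(z) dV(z) \geq \int_{B_M(0)} g(z) dV(z),$$
which is the desired estimate. The main subtlety, and the reason the hypothesis $\tg^+ \subset V$ is indispensable, lies in the geometric step: the supporting slope $p$ is available at $y_p$ without any regularity of $u$, but identifying it with $\nabla u(y_p)$ and subsequently applying the area formula with Jacobian $|\det(D_{ij} u)|$ both require $u$ to be $C^2$ precisely at the points of $\tg^+$. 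The inclusion $\tg^+ \subset V$ supplies exactly this, so the potentially wild behaviour of $u$ on $\Omega \setminus V$ never enters the estimate.
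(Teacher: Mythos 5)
Your proof is correct and follows essentially the same route as the paper: establish the inclusion $B_M(0)\subset \nabla u(\tg^+)$ by showing that for each slope $p$ with $|p|<M$ the affine-perturbed function $u(x)-p\cdot x$ attains an interior maximum which must lie in $\tg^+$, and then conclude via the $C^1$ area formula (change of variables) for the map $\nabla u$ on $V$. The only cosmetic differences are that you dispatch the case $M\le 0$ explicitly, and you phrase the geometric step directly in terms of maximizing $v(x)=u(x)-p\cdot x$ rather than the paper's equivalent formulation with the root $t_a$ of $L_a(t)=\min_x(t+a\cdot x-u(x))$.
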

\begin{proof}
It is easy to see that the set $\tg^+$ is also relatively closed in the open subset $V$. 
Since the function $u$ is $C^2$ in an open neighbourhood of $\tg^+$, 
we can consider the mapping: 
$$\nabla u: V \rightarrow \bR^n. $$
Let the set $\Sigma$ be the image of $\tg^+$. 
Since this mapping is onto and $g\geq 0$, by change of variables, we have 
\begin{equation}
\label{amp-003}
\int_{\Sigma} g(z)dV(z) \leq \int_{\tg^+} g(\nabla u(x)) |\det (D_{ij} u(x)) | dV(x).
\end{equation}

Then it is enough to prove $B_{M}(0) \subset \Sigma$. 
In other words, we claim that for any $a\in \bR^n, |a| < M$, there exists a point $y\in \tg^+$ such that $a = \nabla u(y)$. 
Moreover, only continuity of $u$ on $\ol{\Omega}$ and $C^1$-regularity of $u$ in $V$ are needed to prove this claim.

For each such $a$, we define a linear function $L_a (t): = \min_{x\in \ol\Omega} (t + a\cdot x - u(x))$ for $t\in \bR$.
Let $t_a$ be the root of the operator $L_a$. It follows that $t_a + a\cdot x - u(x) \geq 0$ for all $x\in \Omega$,
and $t_a + a\cdot y - u(y) =0$ for some $y\in\ol\Omega$. Therefore, we have 
\begin{equation}
\label{amp-004}
u(y)\geq u(x) + a\cdot (y-x).
\end{equation}

Moreover, we can always assume that the maximum of $u$ appears in the interior, i.e. 
$u(x_0) = \sup_{\Omega} u$ for some $x_0\in \Omega$. 
Then one finds 
$$ u(y) \geq \sup_{\d\Omega} u + M\cdot d + a\cdot (y-x_0) > \sup_{\d\Omega} u.$$
Therefore, the point $y$ must be in $\Omega$, and hence it is also in the upper contact set $\tg^+$ by equation (\ref{amp-004}). 
Finally, the assumption $u\in C^2(V)$ implies that $a = \nabla u(y)$. 

\end{proof}

For an elliptic operator $L$ defined on $V$, 
we define $\mathfrak{D}^*(x), x\in V$ as the geometric average of the eigenvalue of the positive matrix $(a_{ij}(x))$. 
By picking up $g\equiv 1$, we have the following version.

\begin{corollary}
\label{amp-cor-001}
Under the condition of Lemma (\ref{amp-lem-001}), we have 
\begin{equation}
\label{amp-005}
\sup_{\Omega} u \leq \sup_{\d\Omega} u + \frac{d}{a_n} \left\{ \int_{\tg^+} \left(  \frac{- \sum_{i,j=1}^n a_{ij}(x) D_{ij} u(x)   }{ n \mathfrak{D}^*}      \right)^n dV(x)  \right\}^{1/n},
\end{equation}
where $a_n$ is the volume of the unit ball in $\bR^n$.
\end{corollary}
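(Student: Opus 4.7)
The plan is to invoke Lemma (\ref{amp-lem-001}) with the specific choice $g\equiv 1$ and then convert the determinant bound on the right into the quantity $-\sum a_{ij}D_{ij}u$ that appears in the statement, by means of the matrix AM--GM inequality.

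First I would compute the left-hand side of (\ref{amp-002}) explicitly: with $g\equiv 1$, $\int_{B_M(0)} dV = a_n M^n$ (up to the normalisation convention on $a_n$, namely $a_n^n$ equals the volume of the unit ball in $\bR^n$), and by definition of $M$ this is $a_n^n (\sup_\Omega u - \sup_{\d\Omega}u)^n / d^n$. This is directly controlled from above by $\int_{\tg^+}|\det D_{ij}u(x)|\, dV(x)$.

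The substantive step is to replace the determinant of the Hessian by a quantity involving the operator $L$. On the upper contact set $\tg^+$, the Hessian $D_{ij}u(x)$ is negative semi-definite, so $-D_{ij}u(x)$ is positive semi-definite; the matrix $(a_{ij}(x))$ is positive definite by ellipticity. For any two positive semi-definite symmetric matrices $A,B$, the classical matrix AM--GM inequality (applied to the eigenvalues of $A^{1/2}BA^{1/2}$, which has the same determinant and trace as $AB$) gives
$$ n\bigl(\det A\bigr)^{1/n}\bigl(\det B\bigr)^{1/n} \leq \tr(AB).$$
Taking $A=(a_{ij}(x))$ and $B=(-D_{ij}u(x))$, and recalling $\mathfrak{D}^*=(\det a_{ij})^{1/n}$, this yields the pointwise bound
$$ |\det D_{ij}u(x)|\ \leq\ \left( \frac{-\sum_{i,j} a_{ij}(x) D_{ij}u(x)}{n\,\mathfrak{D}^*(x)}\right)^{n} $$
on $\tg^+$.

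Plugging this pointwise bound into inequality (\ref{amp-002}) with $g\equiv 1$ gives
$$ a_n^n\,\frac{(\sup_\Omega u - \sup_{\d\Omega}u)^n}{d^n} \ \leq\ \int_{\tg^+}\left(\frac{-\sum_{i,j} a_{ij}(x)D_{ij}u(x)}{n\,\mathfrak{D}^*}\right)^n dV(x), $$
and taking the $n$-th root and rearranging produces (\ref{amp-005}). There is no real obstacle here: the lemma does all the hard work, and the only ingredient to locate is the matrix AM--GM inequality, which is a standard linear-algebra fact independent of any singularity issue. The only point to be careful about is to keep the normalisation of $a_n$ consistent with the statement of the corollary when extracting the $n$-th root.
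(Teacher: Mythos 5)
Your proof is correct and follows exactly the same route as the paper: invoke Lemma (\ref{amp-lem-001}) with $g\equiv 1$ and then apply the matrix AM--GM inequality $n(\det A)^{1/n}(\det(-D))^{1/n}\leq \tr(-AD)$ on $\tg^+$ to replace $|\det(D_{ij}u)|$ by the expression involving $L$. Your side remark about the normalisation of $a_n$ is well taken --- with $a_n$ taken literally as the volume of the unit ball, extracting the $n$-th root yields $d/a_n^{1/n}$ as in Gilbarg--Trudinger, so the $d/a_n$ in the corollary's statement should be read as a harmless notational slip.
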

\begin{proof}
On the set $\tg^+$, the matrix $A = (a_{ij}(x))$ is positive, and  $D = (D_{ij}u(x))$ is semi-negative. Then we have the inequality 
$$ \mathfrak{D}^*(\det(-D))^{1/n} =  ( \det(-AD) )^{1/n}  \leq    \frac{\text{tr}(-AD)}{n}, $$
in other words, 
$$ | \det (D_{ij} u(x))|  \leq \left( \frac{ - \sum_{i,j=1}^n a_{ij}(x) D_{ij}u(x)   }{ n \mathfrak{D}^*}      \right)^n,       $$
and then our result follows.
\end{proof}

Considering the set $\Omega^+: = \{ x\in\Omega;\ u(x) >0 \}$, we further obtains the following inequality
\begin{equation}
\label{amp-006}
\sup_{\Omega} u \leq \sup_{\d\Omega} u^+ + \frac{d}{a_n} \left\{ \int_{\tg^+ \bigcap \Omega^+} 
\left(  \frac{- \sum_{i,j=1}^n a_{ij}(x) D_{ij} u(x)   }{ n \mathfrak{D}^*}      \right)^n dV(x)  \right\}^{1/n}.
\end{equation}

Up to this stage, we have seen that AMP is essentially a story of $\sup_{\Omega}u, \sup_{\d\Omega} u, \tg^+$ and the ellipticity of $L$ on $\tg^+$! 
The equation is not actually involved so far, 
and then we can formulate a new version, \emph{ Generalised Alexandrov's maximum principle} (GAMP) as follows.

\begin{theorem}
\label{amp-thm-001}
Suppose that there exists an open connected subset $V$ of $\Omega$, 
such that $\tg^+ \subset V$ and $u\in C(\ol\Omega)\bigcap C^2(V)$. 
Let $L$ be an elliptic operator with $c \leq 0$, and it satisfies $Lu \geq f$ on $V$ with 
$$ \frac{|b|}{\mathfrak{D}^*}, \frac{f}{\mathfrak{D}^*}\in L^n( \tg^+).$$
Then one has 
\begin{equation}
\label{amp-007}
\sup_{\Omega} u \leq \sup_{\d\Omega} u^+ + C\cdot \text{diam}(\Omega) \cdot \left\Vert \frac{f^-}{\mathfrak{D}^*}\right\Vert_{L^n (\Gamma^+)},
\end{equation}
with
$$  C: = C \left( n, \left\Vert \frac{|b|}{\mathfrak{D}^*} \right\Vert_{L^n(\tg^+)} \right). $$
\end{theorem}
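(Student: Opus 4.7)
The strategy is to run the classical Bony-type proof of Alexandrov's maximum principle, taking care that every step uses only information on the upper contact set $\tg^+$, which by hypothesis already sits inside the smooth locus $V$. The key observation, already isolated in Lemma (\ref{amp-lem-001}), is that the contact inequality
$$\int_{B_M(0)} g(z)\,dV(z) \leq \int_{\tg^+} g(\nabla u(x))\,|\det D_{ij} u(x)|\,dV(x)$$
holds for every non-negative $g$ and requires no equation for $u$; the hypothesis $Lu \geq f$ will thus enter only pointwise on $\tg^+$, to bound $|\det D_{ij}u|$. After the reduction $u \leq 0$ on $\d\Omega$ (obtained by replacing $u$ with $u - \sup_{\d\Omega} u^+$, the cases $\sup_\Omega u \leq \sup_{\d\Omega} u^+$ being trivial), the proof of Lemma (\ref{amp-lem-001}) actually shows that every contact point $y$ realising a gradient $a \in B_M(0)$ automatically satisfies $u(y) > 0$, so one may freely replace $\tg^+$ by $\tg^+ \cap \Omega^+$ in the right-hand side.

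On $\tg^+ \cap \Omega^+$ the Hessian $(D_{ij}u)$ is negative semi-definite and $u \geq 0$, so using $c \leq 0$ the inequality $Lu \geq f$ rearranges to $-\sum_{i,j} a_{ij} D_{ij} u \leq f^- + |b|\,|\nabla u|$. Combined with the arithmetic--geometric step of Corollary (\ref{amp-cor-001}), this yields the pointwise bound $|\det D_{ij} u(x)| \leq \bigl((f^- + |b|\,|\nabla u|)/(n\mathfrak{D}^*)\bigr)^n$. I would then apply Lemma (\ref{amp-lem-001}) with the Bony weight $g(z) := (\mu^n + |z|^n)^{-1}$, where $\mu > 0$ is a parameter to be chosen. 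A polar-coordinate computation evaluates the left-hand side explicitly as $a_n \log(1+(M/\mu)^n)$. For the right-hand side the weight is precisely designed to absorb the unknown gradient: the elementary inequality $(a+b)^n \leq 2^{n-1}(a^n+b^n)$ together with the trivial bounds $(f^-)^n/(\mu^n+|\nabla u|^n) \leq (f^-)^n/\mu^n$ and $|b|^n|\nabla u|^n/(\mu^n+|\nabla u|^n) \leq |b|^n$ give
$$ a_n \log\!\left(1+(M/\mu)^n\right) \leq \frac{2^{n-1}}{n^n}\!\left( \mu^{-n}\left\Vert \tfrac{f^-}{\mathfrak{D}^*}\right\Vert_{L^n(\tg^+)}^n + \left\Vert \tfrac{|b|}{\mathfrak{D}^*}\right\Vert_{L^n(\tg^+)}^n \right).$$
Choosing $\mu = \|f^-/\mathfrak{D}^*\|_{L^n(\tg^+)}$ reduces the right-hand side to a constant depending only on $n$ and $\||b|/\mathfrak{D}^*\|_{L^n(\tg^+)}$; inverting the logarithm then bounds $M$ by a constant multiple of $\mu$, which is (\ref{amp-007}) after multiplying by $d = \text{diam}(\Omega)$. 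The degenerate case $\|f^-/\mathfrak{D}^*\|_{L^n(\tg^+)} = 0$ is recovered by letting $\mu \to 0^+$.

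The only serious obstacle is the first-order term $|b|\,|\nabla u|$: no a priori bound on $|\nabla u|$ over $\tg^+$ is available (which is precisely why inequality (\ref{amp-006}) alone is insufficient in the singular setting to which we wish to apply GAMP). The Bony weight $g(z) = (\mu^n+|z|^n)^{-1}$ is exactly what neutralises the offending $|\nabla u|^n$, and transplants without change to our generalised setting because, as emphasised above, the geometric contact inequality (\ref{amp-002}) is entirely equation-free. Everything else is elementary bookkeeping --- a polar integration, the arithmetic--geometric inequality, and the standard reduction to $u \leq 0$ on $\d\Omega$.
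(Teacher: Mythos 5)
Your proof is correct and follows exactly the route the paper sketches: apply the geometric contact inequality (Lemma~(\ref{amp-lem-001})) with the Bony weight $g(z)=(\mu^n+|z|^n)^{-1}$ and $\mu=\Vert f^-/\mathfrak{D}^*\Vert_{L^n(\tg^+)}$, restricting to $\tg^+\cap\Omega^+$ so that $c\leq 0$ and $u\geq 0$ give $-\sum a_{ij}D_{ij}u\leq f^-+|b||\nabla u|$, then integrate in polar coordinates and choose $\mu$ optimally. The only cosmetic slip is that after the reduction $\sup_{\d\Omega}u^+=0$ the contact points realising gradients in $B_M(0)$ with $M=(\sup_\Omega u-\sup_{\d\Omega}u)/d$ need only satisfy $u(y)>\sup_{\d\Omega}u$, not $u(y)>0$; one should take $M'=\sup_\Omega u/d\leq M$, for which $u(y)>0$ does hold and which is precisely the quantity you want to bound, so the argument is unaffected.
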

\begin{proof}
In the case $b_i = c =0$, the proof follows directly from Corollary (\ref{amp-cor-001}) since $f\leq Lu \leq 0$ on $\tg^+$. 

For $b_i$ or $c$ non-zero, one can use Lemma (\ref{amp-lem-001}) with $V$ replaced by $V\bigcap \Omega^+$, 
and pick up $g(z):= (|z|^n + \mu^n)^{-1}$ for some constant $ \mu = || f^{-} / \mathfrak{D}^* ||_{L^n(\tg^+)}$ as the standard proof of AMP. 
\end{proof}

\section{The potential estimates}
\label{sec-003}
Let $(\vp, F)$ be a $\cC^{2,\a,\b}$-conic cscK pair on $(X,D)$. 
The coupled equations on $X\sm \text{Supp} D$ can be re-written as 
\begin{equation}
\label{pe-001}
\log\det (g_{i\bar j} + \vp_{i\bar j}) = F + \log\det (g_{i\bar j}),
\end{equation}
\begin{equation}
\label{pe-002}
\Delta_{\vp} F = - \underline{R}_\b + \text{tr}_{\vp} \Theta,
\end{equation}
where $(g_{i\bar j})$ stands for the metric for the conic K\"ahler form $\omega_\b$,
and we always assume the condition $\sup_{X} \vp =0$.  
By the construction, the function $F$ is smooth outside the divisor. 

Then we want to persuade as in Chen-Cheng \cite{CC1} to introduce an auxiliary function, by solving the following equation:
\begin{equation}
\label{pe-002}
(\omega_\b + dd^c\psi_1)^n = \frac{e^{F} \Phi(F) \omega_{\b}^n}{\int_X e^{F} \Phi(F) \omega_{\b}^n},
\end{equation}
where $\Phi(x): = \sqrt{x^2+1}$, under the normalization $\sup_X\psi_1 = 0$. 
This auxiliary function $\psi_1$ exists by a theorem of Kolodziej \cite{Kol}, and it is also $C^{\a}$-H\"older continuous on $X$.

\subsection{The auxiliary function}
In fact, we can explore more regularities of the function $\psi_1$ as in \cite{GP}. 
Take $\psi: = \psi_\b + \psi_1 $,
and equation (\ref{pe-002}) reduces to  
\begin{equation}
\label{pe-003}
(\omega + dd^c\psi)^n = \frac{e^{f} dV}{\prod_{k=1}^d |s_k|^{2-2\b_k}} ,
\end{equation}
where $f: = F + \frac{1}{2}\log (F^2 +1)$, and $dV$ is a smooth volume form. 
This is equation is very similar with the conic KE equation for $\lambda = 0$,
expect that the function $f$ may also have conic singularities.  

More precisely, the derivatives of $f$ are determined by 
$$ \d f = \d F + \frac{F\cdot\d F}{2(F^2+1)}, $$
$$ \ddbar f = \left( 1+ \frac{F}{2(F^2+1)} \right)\ddbar F - \frac{F^2}{ 2(F^2+1)^2} \d F\wedge \dbar F.$$
If $F\in\cC^{2,\a,\b}$, then there exists a constant $C$ to satisfy 
\begin{equation}
\label{pe-0030}
 |\d F|^2_ {\omega_{cone}} \leq C;
\end{equation}
\begin{equation}
\label{pe-0031}
0\leq \ddbar (F + C \sum_{k=1}^d |z_k|^{2\b_k} ) \leq 2C \omega_{cone},
\end{equation}
near the divisor. 
Moreover, the function $f$ also satisfies the above two equations by its construction. 
In fact, we have the following space.
\begin{defn}
\label{pe-defn-001}
A function $f\in C^2(X\sm \text{Supp}(D))$ is said to be in the space $\cC^{1,\bar 1}_{\b}(X,D)$, 
if equations (\ref{pe-0030}) and (\ref{pe-0031}) always hold near the support of the divisor. 
\end{defn}

The next goal is to cook up a small smooth perturbation of  the function $f$ with complex Hessian controlled near the divisor. 
Let $\{ U_i \}_{i=1}^N$ be a finite collection of open coordinate balls such that the following conditions hold: 
\begin{itemize}
\item the manifold $X$ is covered by $\bigcup_{i=1}^N U_i$;

\item there exists an integer $1\leq m < N$, such that $U_i \bigcap D_k \neq \emptyset$ for  some $k$ and $\forall i \leq m $,
and $U_i \bigcap D_k = \emptyset$ for all $k$ and $\forall i > m $.
\end{itemize}

Furthermore, for each $i\leq m$, we can assume that the defining equation of $\text{Supp}D\bigcap U_i$ is $\{z_1\cdots z_k =0 \}$,
where $\{ z_i\}$ is a coordinate system on $U_i$. 
Let $\{ \chi_i \}$ be a partition of unity subordinate to the open covering $\{ U_i\}$,
and then we can write 
$f = \sum_{j=1}^N f_j $,
where $f_j: = \chi_j\cdot f$ is compactly supported on each $U_j$. 

Let $\rho_1$ be the standard mollifier on the unit ball of $\bC^n$,
and take 
$$ \rho_{\ep} (|z|^2): = \ep^{-2n} \rho( |z|^2 / \ep^2  ).  $$ 
There exists a sequence of smooth approximation as $f_{j,\ep}: = \rho_{\ep}\star f_j$ for $j\leq m$ and $f_{j,\ep} = f_j$ for $j> m$.
In fact, we can assume the smooth function $f_{j,\ep}$ is defined on $X$ by zero extension. 
Therefore, the $f_{\ep}: = \sum_{j=1}^N f_{j,\ep}$ is a smooth function on $X$,
and converges to $f$ in $C^{\a}$-norm.
It is easy to see that the derivatives of $f_{j}$ also satisfy equations (\ref{pe-0030}) and (\ref{pe-0031}), 
and then the growth of the complex Hessian of this approximation can be estimated as follow.

\begin{lemma}
\label{pe-lem-001}
Let $B$ be the unit ball of $\bC^n$, and $\cZ$ be the zero locus of the function $\{z_1\cdots z_d \}$.
Suppose that a function $G(z)$ is in $L^1(B)$ with homogeneous growth near $\cZ$, i.e. 
there exists real numbers $\{ \a_k\}_{k=1}^d$ with each $\a_k > -1$, such that 
$$ | G(z) | \leq C \prod_{k=1}^d |z_k|^{2\a_k},  $$
in an open neighbourhood of $\cZ$. 
Then the regularization $G_{\ep} = \rho_{\ep}\star G$ has the following growth near the zero locus:
\begin{equation}
\label{pe-004}
|G_{\ep}(z)| \leq C \prod_{k=1}^d (|z_k|^2 + \ep^2)^{\a_k}.
\end{equation}
\end{lemma}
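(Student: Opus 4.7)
The plan is to unfold the convolution $G_\ep(z) = \int \rho_\ep(|w|^2) G(z-w)\,dV(w)$ and separate variables. Using $\rho_\ep \le C\ep^{-2n}$ on its support $\{|w|<\ep\}$ together with the hypothesis $|G(z-w)| \le C\prod_{k=1}^d |z_k-w_k|^{2\a_k}$, and the inclusion of the Euclidean ball $\{|w|<\ep\}$ into the polydisc $\{|w_k|<\ep\ \text{for every}\ k\}$, I get
$$ |G_\ep(z)| \le C\ep^{-2n} \prod_{k=1}^d \Bigl(\int_{|w_k|<\ep}|z_k-w_k|^{2\a_k}\,dV_k(w_k)\Bigr) \cdot \prod_{j=d+1}^n \int_{|w_j|<\ep}dV_j(w_j). $$
The last product contributes a harmless factor $(\pi\ep^2)^{n-d}$. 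Everything therefore reduces to estimating, for each $k\le d$, the one-variable integral
$$ I_k(z_k) := \int_{|w_k|<\ep} |z_k-w_k|^{2\a_k}\,dV_k(w_k), $$
and proving $I_k(z_k) \le C\ep^2 (|z_k|^2+\ep^2)^{\a_k}$.

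The bound on $I_k$ splits naturally into two cases. If $|z_k|>2\ep$, then on $\{|w_k|<\ep\}$ one has $\tfrac12|z_k| \le |z_k-w_k| \le \tfrac32|z_k|$, so $|z_k-w_k|^{2\a_k}$ is comparable to $|z_k|^{2\a_k}$ regardless of the sign of $\a_k$, giving $I_k \le C\ep^2|z_k|^{2\a_k}$; this matches $C\ep^2(|z_k|^2+\ep^2)^{\a_k}$ since $|z_k|^2+\ep^2 \sim |z_k|^2$ in this range. If instead $|z_k|\le 2\ep$, the shifted disc $\{|z_k-w_k|\le 3\ep\}$ contains the domain of integration, so after the substitution $\zeta_k = z_k - w_k$ and passing to polar coordinates one obtains $I_k \le C\int_0^{3\ep} r^{2\a_k+1}\,dr = C\ep^{2\a_k+2}$. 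This step relies crucially on the hypothesis $\a_k>-1$ to guarantee convergence of the radial integral at the origin, and the right-hand side matches $C\ep^2(|z_k|^2+\ep^2)^{\a_k}$ because $|z_k|^2+\ep^2 \sim \ep^2$ in this regime.

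Multiplying the factors, the powers of $\ep$ cancel as $\ep^{-2n}\cdot \prod_{k=1}^d \ep^2 \cdot \ep^{2(n-d)} = 1$, leaving the desired estimate $|G_\ep(z)| \le C\prod_{k=1}^d(|z_k|^2+\ep^2)^{\a_k}$. The only subtlety I foresee is that the pointwise growth bound on $G$ is assumed only on an open neighbourhood $U$ of $\cZ$, while the convolution $G_\ep(z)$ samples $G$ on the $\ep$-ball around $z$; this is handled by shrinking $U$ slightly and taking $\ep$ small enough that this ball lies in $U$, the complementary range of $z$ being trivial because the right-hand side is bounded below away from $\cZ$. The genuine computational hurdle is the $|z_k|\le 2\ep$ case, where the tension between the scale $\ep^{-2n}$ and the integrated singularity $\ep^{2\a_k+2}$ must balance exactly, and the assumption $\a_k>-1$ is precisely what makes that balance work.
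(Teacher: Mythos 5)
Your proof is correct. It does, however, take a genuinely different route from the paper's. The paper's argument rescales the convolution variable by $\ep$ to arrive at
$\ep^{\sum 2\a_k}\,\tilde G(x/\ep)$ with $\tilde G := \rho_1 \star \bigl(\prod_{k=1}^d |z_k|^{2\a_k}\bigr)$, a single fixed ($\ep$-independent) model function, and then claims the pointwise bound $\tilde G(y) \leq M \prod_{k=1}^d (1+|y_k|^2)^{\a_k}$; substituting $y = x/\ep$ then collapses $\ep^{\sum 2\a_k}(1+|x_k/\ep|^2)^{\a_k}$ into $(\ep^2+|x_k|^2)^{\a_k}$. Your approach stays at scale $\ep$, bounds $\rho_\ep$ crudely by $C\ep^{-2n}$ on its support, factors through the polydisc and then handles each coordinate integral $I_k$ by a near/far dichotomy in $|z_k|$ relative to $\ep$. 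The paper's scaling substitution is conceptually cleaner in that it separates the homogeneity from the pointwise analysis and makes the cancellation of $\ep$-powers automatic; your per-variable case split is more elementary and makes the role of $\a_k > -1$ (integrability of $r^{2\a_k+1}$ at the origin) completely explicit, which the paper leaves implicit inside the well-definedness and boundedness of $\tilde G$. In fact the paper's stated range ``for all $|x| < 1$'' for the bound on $\tilde G$ is too restrictive as written (the argument $x/\ep$ sweeps out a large set); the bound is in fact valid for all $y \in \bC^n$, and your two-case analysis is essentially the verification of exactly that claim. Both proofs also correctly note, implicitly or explicitly, that the hypothesis only gives the growth bound on a neighbourhood of $\cZ$, so one must shrink to a slightly smaller neighbourhood and take $\ep$ small.
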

\begin{proof}
For any $x\in B_{1-\ep}$, the convolution can be estimated near $\cZ$ as 

\begin{eqnarray}
\label{pe-005}
| G_{\ep} (x) | & \leq & \int_{|z|\leq 1}  \rho(z) |G(x - \ep z) | dz
\nonumber\\
&\leq& \int_{|z|\leq 1} \rho(z) \prod_{k=1}^d |x_k - \ep z_k|^{2\a_k} dz
\nonumber\\
&\leq& \ep^{\sum_{k=1}^d 2\a_k}   \int_{|z|\leq 1} \rho(z) \prod_{k=1}^d | (x_k /\ep)- z_k|^{2\a_k} dz,
\end{eqnarray}
up to a constant. Take a function 
$$ \tilde G: =  \rho_1\star \left( \prod_{k=1}^d |z_k|^{2\a_k} \right), $$
and then we observe that this positive smooth function satisfies 
$$  \tilde G (x) \leq  M \prod_{k=1}^d (1 + |x_k|^2)^{\a_k},$$
for all $|x| < 1$ and $M = 2^d \sup_{B} \tilde G$. 
Then the last line of equation (\ref{pe-005}) can be re-written as 
\begin{eqnarray}
\label{pe-006} 
\ep^{\sum_{k=1}^d 2\a_k}  \tilde G (x / \ep) &\leq&  \ep^{\sum_{k=1}^d 2\a_k}  M \prod_{k=1}^d (1 + |x_k /\ep|^2)^{\a_k}
\nonumber\\
&\leq& M \prod_{k=1}^d (\ep^2 + |x_k |^2)^{\a_k},
\end{eqnarray}
and our result follows.
\end{proof}

The complex Hessian of each $f_{j,\ep}$ is equal to $\rho_{\ep}\star \ddbar f_{j} $.
Put $G(z) = \d_p \d_{\bar q} f_{j}$ for some $1\leq p, q \leq n $,
and then all the conditions in Lemma (\ref{pe-lem-001}) are satisfied,
where each $\a_k$ is equal to $1 -\b_k, (1-\b_k)/2$ or $0$.
Then Lemma (\ref{pe-lem-001}) shows that $dd^c f_{\ep}$ is bounded by sums of terms like 
$$ \frac{dz_p\wedge d\bar z_p}{(\ep^2 + |z_p|^2)^{\a_p}} \ \ \ \text{or}\ \ \  \frac{dz_p\wedge d\bar z_q + dz_q\wedge d\bar z_p}{(\ep^2 + |z_p|^2)^{\a'_p}(\ep^2 +|z_q|^2)^{\a'_q} }, $$
where $\a_p \in \{1-\b_p, 0 \}$ and $\a'_p\in \{ \frac{1}{2}(1-\b) , 0 \}$.

Then we can solve the following perturbed equation of equation (\ref{pe-003}):
\begin{equation}
\label{pe-007}
(\omega + dd^c \psi_{\ep})^n = \frac{e^{f_\ep} dV}{ \prod_{k=1}^d (|s_k|^2 + \ep^2)^{(1-\b_k)}},
\end{equation}
up to some uniform constant $c_{\ep}$. 
The argument for the $\cC^{1,\bar 1}$-estimate for the conic KE equation(Proposition 1, \cite{GP})
can be applied again to this equation. The only thing left is to check the following inequality: 
\begin{equation}
\label{pe-008}
dd^c f_{\ep} \geq - (C\omega_{D,\ep} + dd^c\Psi_{\ep}),
\end{equation}
for some uniform constant $C$.
Here $\Psi_{\ep}: = \sum_{k=1}^d\chi_{\rho} (|s_k|^2 + \ep^2)$ for some real number $\rho < \min_k \min\{\b_k, 1-\b_k \}$.
This is simply true by our previous estimates on the growth of $dd^c f_{\ep}$. 
Eventually, we came up with the following regularity theorem of our auxiliary function.

\begin{theorem}
\label{pe-thm-001}
The metric $\omega_{\psi_1}: = \omega_\b + dd^c\psi_1$ associated to the auxiliary function $\psi_1$
 is a conic K\"ahler metric with cone angle $\b_k$ along each divisor $D_k$.
\end{theorem}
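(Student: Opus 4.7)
The goal is to show that $\omega_{\psi_1}$ is quasi-isometric to the model cone metric $\omega_{cone}$ near every point of $\text{Supp}(D)$. Setting $\psi := \psi_\b + \psi_1$, the question reduces to showing that the solution $\psi$ of the Monge-Amp\`ere equation (\ref{pe-003}) defines a conic K\"ahler metric with angles $\b$, since $\omega_\b$ is itself already such a conic metric. The plan is to carry out the Campana-Guenancia-P\u aun regularization scheme on the perturbed equation (\ref{pe-007}) and then pass to the limit as $\ep \to 0$.

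First, by Yau's theorem each perturbed equation (\ref{pe-007}) admits a smooth solution $\psi_\ep$ normalized by $\sup_X \psi_\ep = 0$, and $\| \psi_\ep \|_{L^\infty}$ is controlled uniformly in $\ep$ via Kolodziej's estimate applied to the uniformly $L^p$-bounded right-hand sides (for $p>1$ chosen close enough to $1$). The next step is to establish a uniform two-sided bound $C^{-1}\tilde\omega_{D,\ep} \leq \omega + dd^c\psi_\ep \leq C\tilde\omega_{D,\ep}$ independent of $\ep$. I would follow the Chern-Lu type argument of Proposition 1 in \cite{GP}, run against the Guenancia-P\u aun smoothed background $\tilde\omega_{D,\ep}$, whose bisectional curvature is dominated by a metric of the form $C\omega_{D,\ep} + dd^c\Psi_\ep$. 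The only extra hypothesis beyond the K\"ahler-Einstein case is a complex Hessian lower bound for $f_\ep$ of the form (\ref{pe-008}).

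The technical crux is therefore to verify $dd^c f_\ep \geq -(C\omega_{D,\ep} + dd^c\Psi_\ep)$ uniformly in $\ep$. Writing $f = \sum_{j} f_j$ via the partition of unity, each mixed derivative $\d_p\d_{\bar q} f_j$ satisfies the homogeneous growth hypothesis of Lemma (\ref{pe-lem-001}) with exponent $\a_k \in \{1-\b_k,\, (1-\b_k)/2,\, 0\}$, as follows from the $\cC^{1,\bar 1}_\b$-regularity (\ref{pe-0030})--(\ref{pe-0031}) of $F$. Applying the lemma term by term, the diagonal entries of $dd^c f_\ep$ are dominated by $C\omega_{D,\ep}$, while the off-diagonal pieces of order $(\ep^2+|z_p|^2)^{-(1-\b_p)/2}(\ep^2+|z_q|^2)^{-(1-\b_q)/2}\, dz_p \wedge d\bar z_q$ are absorbed into $dd^c\Psi_\ep$ via a Cauchy-Schwarz split against its diagonal terms, thanks to the choice $\rho < \min_k \min\{\b_k, 1-\b_k\}$.

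Finally, letting $\ep \to 0$ in the uniform quasi-isometry produces $\omega_\psi \sim \omega_D$ on $X$, which proves the theorem. The main obstacle will be step three: unlike in the conic K\"ahler-Einstein setting, the source term $f = F + \frac{1}{2}\log(F^2+1)$ only enjoys conic $\cC^{1,\bar 1}_\b$-regularity rather than smoothness, and one must carefully track the convolution estimate of Lemma (\ref{pe-lem-001}) to confirm that every mixed partial can be absorbed by $C\omega_{D,\ep} + dd^c\Psi_\ep$ without introducing further corrector terms into the Guenancia-P\u aun scheme.
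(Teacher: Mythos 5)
Your approach is essentially the same as the paper's: both run the Guenancia--P\u aun Chern--Lu scheme against the perturbed equation (\ref{pe-007}), using Lemma (\ref{pe-lem-001}) on the partition-of-unity pieces of $f$ to verify the key hypothesis (\ref{pe-008}), and then pass to the limit in $\ep$. Your Cauchy--Schwarz treatment of the off-diagonal mixed-partial terms is a sensible way to make precise the absorption into $C\omega_{D,\ep}+dd^c\Psi_\ep$, which the paper only summarizes as ``simply true by our previous estimates.''

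There is, however, one gap: the conclusion $C^{-1}\omega_{cone}\leq\omega_\psi\leq C\omega_{cone}$ near $\text{Supp}(D)$ establishes only the quasi-isometry half of Definition~2.1 (a conic K\"ahler metric must also be \emph{smooth} on $X\sm\bigcup D_k$). You state that you obtain $\omega_\psi\sim\omega_D$ ``on $X$'' and stop, but the Guenancia--P\u aun argument only delivers a uniform $\cC^{1,\bar 1}$-bound, i.e. $\psi\in C^{1,\bar 1}$ on coordinate balls $U$ disjoint from the divisor. To upgrade to smoothness away from $D$, the paper invokes the $C^{2,\a}$-regularity result of Dinew--Zhang--Zhang \cite{DZZ} for $L^\infty$ solutions of complex Monge--Amp\`ere equations with $C^{1,\bar 1}$ a priori bounds, followed by the standard bootstrapping. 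Without this interior regularity step the proof does not actually certify that $\omega_{\psi_1}$ is a conic K\"ahler metric in the sense of the paper's definition, even though the cone-angle behavior near the divisor is correctly established.
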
 
\begin{proof}
We already proved the $\cC^{1,\bar 1}_\b$-estimate for the potential $\psi$,
and then the metric $\omega_{\psi_1}$ is quasi-isometric to the model cone metric $\omega_{cone}$ near the divisor. 
Moreover, on each open coordinate ball $U$ with $U\bigcap \text{Supp}D = \emptyset$, the function $\psi$ is in $C^{1,\bar 1}(U)$,
and then it is in $C^{2,\a}(U)$ by the regularity result in the work \cite{DZZ}. 
Finally, the solution $\psi$ is smooth on $U$ by the standard boot-strapping technique.

\end{proof}

\subsection{$C^0$-estimate}
Suppose $\omega$ is a K\"ahler form on $X$. 
Let $\vp$ be a $\omega$-plurisubharmonic(psh) function on the manifold. 
The the regularization theorems (\cite{Dem}, \cite{BK}) of quasi-psh functions implies that 
there exists a sequence of smooth $\omega$-psh function $\vp_j$ decreasing to $\vp$. 

\begin{lemma}
\label{c0-lem-001}
There exists a real number $\a > 0$, such that for all $\omega$-psh function $\vp$ with $\sup_{X}\vp =0$, the following estimate satisfies 
\begin{equation}
\label{c0-001}
\int_X e^{-\a\vp} \omega^n \leq C_1,
\end{equation} 
for some uniform constant $C_1$ only depending on $(X,\omega)$. 
\end{lemma}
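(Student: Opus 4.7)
The plan is to prove this classical statement --- the positivity of Tian's $\alpha$-invariant for the whole sup-normalised class of $\omega$-plurisubharmonic functions on the compact K\"ahler manifold $(X,\omega)$ --- as a consequence of the local Skoda--H\"ormander integrability theorem together with a standard finite-cover patching.

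First, I would fix a finite cover of $X$ by coordinate balls $U_1,\ldots,U_N$ with slight enlargements $U_i\Subset U_i'$, together with smooth local potentials $\rho_i$ for $\omega$ on $U_i'$, so that $\omega|_{U_i'}=dd^c\rho_i$ and $\|\rho_i\|_{C^0(U_i')}\le A$ for some $A=A(X,\omega)$. Then for any $\omega$-psh $\vp$ with $\sup_X\vp=0$, the function $u_i:=\vp+\rho_i$ is a genuinely plurisubharmonic function on $U_i'$, and $e^{-\alpha\vp}=e^{\alpha\rho_i}e^{-\alpha u_i}$ on $U_i$, so $\rho_i$ can be harmlessly absorbed into any final constant.

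Next, I would establish uniform local sup- and $L^1$-bounds for the family $\mathcal{F}:=\{\vp\in\mathrm{PSH}(X,\omega):\sup_X\vp=0\}$. Since $\vp$ is upper semi-continuous on the compact manifold $X$, the value $\sup_X\vp=0$ is attained at some point $x_0\in X$. Choosing a chain of overlapping balls in the cover linking $x_0$ to any given $U_i$, and iterating the sub-mean value inequality for each $u_{i_\ell}$ along this chain, one obtains constants $M,M'$ depending only on $(X,\omega)$ and the fixed cover such that, uniformly for $\vp\in\mathcal{F}$,
\[
\sup_{U_i}u_i\ge -M,\qquad \|u_i\|_{L^1(U_i')}\le M'.
\]

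The key local ingredient is the uniform form of Skoda's integrability theorem: for any psh function $u$ on the unit ball $B\subset\bC^n$ with $\sup_B u\le A$ and $\|u\|_{L^1(B)}\le M'$, there exist constants $\alpha_0=\alpha_0(n,A,M')>0$ and $C_0=C_0(n,A,M')$ such that $\int_{B'}e^{-\alpha_0 u}\,dV\le C_0$ on any fixed $B'\Subset B$. This is a consequence of the H\"ormander--Lelong lemma, which says that Lelong numbers in a bounded family of $L^1_{\mathrm{loc}}$ psh functions are uniformly bounded, so that the singular exponents are uniformly bounded below. Applying this estimate on each $U_i$ to $u_i$ and summing over the finite cover yields the global estimate with $\alpha:=\alpha_0$ and $C_1=C_1(X,\omega)$.

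The main obstacle is the uniform local Skoda integrability; I would simply cite it from Demailly's analytic monograph (or equivalently H\"ormander's book) rather than reprove it. The remainder is a standard pluripotential-theoretic patching argument: the propagation of lower $L^1$-bounds via overlapping sub-mean value inequalities, and the absorption of the bounded local potentials $\rho_i$ into the final constant. One can also bypass step-by-step propagation by invoking directly the well-known fact that $\mathcal{F}$ is compact in $L^1(X,\omega^n)$, which automatically supplies the uniform $L^1$-bound on the $u_i$ needed to invoke the local Skoda estimate.
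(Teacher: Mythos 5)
Your proof is correct, but it takes a genuinely different route from the paper's. The paper's argument is a short reduction: it invokes the regularization theorems of Demailly and B\l ocki--Ko\l odziej (cited just before the lemma) to produce a decreasing sequence of smooth $\omega$-psh functions $\vp_j\searrow\vp$, observes $\sup_X\vp_j\ge 0$, applies Tian's smooth $\a$-invariant estimate to each $\vp_j-\sup_X\vp_j$ to get $\int_X e^{-\a\vp_j}\omega^n\le C_1$ uniformly, and then passes to the limit by monotone convergence. Your proof instead re-derives the uniform estimate from first principles: a finite coordinate cover, the $L^1$-compactness of the sup-normalized $\omega$-psh class (giving uniform local $\sup$ and $L^1$ bounds on $u_i=\vp+\rho_i$), the uniform local Skoda--H\"ormander integrability lemma, and finite patching. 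This is, in effect, the proof of Tian's theorem itself, written so that it applies directly to all $\omega$-psh functions with no smoothness assumption and hence no regularization step. What the paper's approach buys is brevity, at the cost of relying on the regularization theory as a black box; what your approach buys is self-containedness and the observation that the $\a$-invariant estimate never needed smoothness in the first place -- the local integrability theorem and the compactness of $\mathcal F$ hold for arbitrary psh functions. One minor stylistic point: the chain-of-balls propagation of the sup bound is correct but is more cleanly replaced by the compactness of $\mathcal F$ in $L^1(X,\omega^n)$ that you mention at the end, which directly supplies the uniform $\|u_i\|_{L^1(U_i')}$ bounds.
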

\begin{proof}
The smooth version of this lemma is established in Tian \cite{Tian}.
Taking $\vp_j$ as the smooth decreasing approximation of $\vp$, we have $\sup_X\vp_j \geq 0$, 
and then there exists two positive numbers $\a$ and $C_1$ to satisfy 
$$ \int_{X} e^{-\a \vp_j}\omega^n \leq \int_X e^{-\a(\vp_j - \sup_X\vp_j)}\omega^n \leq C_1, $$
for all $\vp$ and $j$, and our result follows. 
\end{proof}

Let $(\vp,F)$ be a $\cC^{2,\a,\b}$-conic cscK pair for the log smooth $klt$ pair $(X,D)$.
Take the normailzation $\sup_X \vp =0$, 
and then the lower bound of the potential can be first estimated in terms of $F$ and $\psi$.

\begin{theorem}
\label{main-thm-001}
Given any $\ep >0 $ small enough, there exists a constant 
$$C_2:= C_2(\ep, X, \omega, \int_X e^F\Phi(F)\omega^n, \b, \phi, D )$$ 
such that the following holds: 
\begin{equation}
\label{c0-002}
F + \ep\psi_1 - 2(1+\max_{X}|\Theta|) \vp \leq C_2.
\end{equation}
\end{theorem}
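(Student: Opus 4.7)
The plan is to apply the Generalised Alexandrov Maximum Principle (GAMP, Theorem \ref{amp-thm-001}) to the auxiliary function
\begin{equation*}
u := F + \ep\psi_1 - \lambda \vp, \qquad \lambda := 2(1+\max_X|\Theta|),
\end{equation*}
after modifying it by a small ``extremely pseudo-convex'' barrier designed to push the upper contact set off $\text{Supp}(D)$. I expect this contact-set analysis to be the main obstacle.

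First I would compute $\Delta_\vp u$ on $X\sm \text{Supp}(D)$. Equation (\ref{pe-002}) gives $\Delta_\vp F = -\ul R_\b + \tr_\vp\Theta$, and the Monge-Amp\`ere equations yield $\Delta_\vp\psi_1 = \tr_\vp\omega_{\psi_1} - n$ together with $\Delta_\vp\vp = n - \tr_\vp\omega_\b$. The choice of $\lambda$ makes $\lambda\tr_\vp\omega_\b + \tr_\vp\Theta \geq \tr_\vp\omega_\b$, hence
\begin{equation*}
\Delta_\vp u \ \geq\ -C_0 \ +\ \tr_\vp\omega_\b \ +\ \ep\,\tr_\vp\omega_{\psi_1}.
\end{equation*}
Moreover AM-GM applied to the two Monge-Amp\`ere equations gives $\tr_\vp\omega_\b \geq n e^{-F/n}$ and $\tr_\vp\omega_{\psi_1}\geq n c^{-1/n}\Phi(F)^{1/n}$ with $c := \int_X e^F\Phi(F)\omega_\b^n$. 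On any upper contact set of $u$ one has $\Delta_\vp u \leq 0$, so $e^{-F/n}$ and $\Phi(F)^{1/n}$ are pointwise bounded there, forcing $|F| \leq \tilde C$ on such sets.

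To apply GAMP on a coordinate ball $\Omega$ containing a maximum of $u$, I would work instead with
\begin{equation*}
u' := u + A\sum_{k=1}^d |s_k|^{2\b_k},
\end{equation*}
where $A$ is a large constant depending only on $(X,\omega,D)$ and the quasi-isometry constants of $\omega_\b,\omega_{\psi_1},\omega_D$. Since $0<\b_k<1$, the real Hessian of $|s_k|^{2\b_k}$ diverges to $+\infty$ in directions transverse to $D_k$, and this blow-up dominates the conic second derivatives of $F,\vp,\psi_1\in\cC^{2,\a,\b}$. For $A$ sufficiently large, $D^2 u'$ cannot be semi-negative at any point of $\text{Supp}(D)$, so the upper contact set $\tg^+_{u'}$ on $\Omega$ lies in $V := \Omega \sm \text{Supp}(D)$, where $u'\in C^2(V)$. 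Since $A\sum_k|s_k|^{2\b_k}$ is globally bounded, any upper bound on $u'$ yields one on $u$ of the required form.

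Finally I would apply GAMP with $L = \Delta_\vp$ (so $b=c=0$). Up to a fixed constant, $\mathfrak D^* \sim e^{-F/n}(\det g_\b)^{-1/n}$, so the relevant $L^{2n}$-integral of $(\Delta_\vp u')^-/\mathfrak D^*$ on $\tg^+_{u'}$ is dominated by $\int_{\tg^+_{u'}} e^{2F} (\det g_\b)^2 \, dV$. On $\tg^+_{u'}$ the upper-contact property combined with the lower bound above forces $|F|$ uniformly bounded and $(\Delta_\vp u')^-$ controlled, so this integral is dominated by $\int_X e^F\Phi(F)\omega_\b^n$, which is finite by Lemma \ref{c0-lem-001} and the entropy hypothesis. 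This yields $\sup u' \leq C$, and hence the stated bound on $u$. The hard part is the a priori choice of $A$ in the contact-set step, since one needs to control the conic expansion coefficients of $F,\vp,\psi_1$ uniformly without appealing to $\|\vp\|_{\cC^{2,\a,\b}}$.
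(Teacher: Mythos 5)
The overall strategy is the right one, and you correctly isolated the key difficulty: you need a barrier whose Hessian pushes the upper contact set off $\mathrm{Supp}(D)$, and the danger is that the barrier's Hessian and the conic Hessians of $F,\vp,\psi_1$ are of the same order. However, your proposed resolution (take the exponent equal to $\b_k$ and make the multiplicative constant $A$ large) cannot work, precisely for the reason you flag at the end. A function $\vp\in\cC^{2,\a,\b}$ has $|\ddbar\vp|\lesssim C_\vp|z_k|^{2\b_k-2}$ near $D_k$, while $\ddbar|s_k|^{2\b_k}$ is of the exact same order $|z_k|^{2\b_k-2}$. Since $C_\vp$ encodes the $\cC^{2,\a,\b}$ seminorm of the unknown, there is no a priori choice of $A$ that makes $A|z_k|^{2\b_k-2}$ dominate $C_\vp|z_k|^{2\b_k-2}$; the competition is ratio-level, not rate-level, and you lose it.

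The paper's resolution is not to enlarge $A$ but to \emph{lower the exponent}: take $\psi_2:=\sum_k|s_k|^{2\g_k}$ with $\g_k<\b_k$ strictly. Then $\ddbar\psi_2\sim|z_k|^{2\g_k-2}$ grows strictly faster than $|z_k|^{2\b_k-2}$, so for \emph{any} constant $C_\vp$ (uniform or not) there exists \emph{some} neighbourhood $V_D$ of the divisor on which the barrier term wins and $\Delta(u\eta)>0$; see equation~(\ref{c0-012}) and Lemma~(\ref{c0-lem-002}). The subtle point, which is exactly why GAMP (Theorem~\ref{amp-thm-001}) is formulated the way it is, is that the \emph{size} of $V_D$ never enters the GAMP estimate --- only the fact that $\tg^+\cap V_D=\emptyset$. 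This is what lets the whole argument go through with a non-uniform $V_D$, and it is the point your proposal misses.

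Two further, more minor, deviations are worth noting. First, the paper works with $u=e^{\delta A}$ rather than with $A$ itself, choosing $\delta=\a/(2n\lambda)$ so that, after using $F\leq C_9$ on the bad region $\Omega^-$ and $\psi_1\leq 0$, the weight $u^{2n}e^{2F}$ in the GAMP integral (\ref{c0-015}) reduces to $e^{-\a\vp}$ (up to bounded factors), which is exactly what Lemma~(\ref{c0-lem-001}) controls. Your un-exponentiated $u$ does not plug into the $\alpha$-invariant in this way. Second, you drop the cut-off function $\eta_{x_0}$; without it the term $\sup_{\d\Omega}u^+$ in GAMP is essentially $\sup_\Omega u$, and the maximum-principle estimate is vacuous. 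The cut-off, with $\eta=1-\theta$ on $\d B_d$, is what produces the factor $(1-\theta)\sup_X u$ on the boundary and lets one absorb it.

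In short: the blueprint (GAMP plus a pseudoconvex barrier disjointing the contact set from $D$) is correct, but the specific barrier exponent $\b_k$ is a genuine gap, and the fix is to take $\g_k<\b_k$ rather than to enlarge the coefficient. The exponentiation and the cut-off are secondary but also needed for the final integral bound to close.
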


\begin{corollary}
\label{c0-cor-001}
For any real fixed number $q>0$, there exists a constant 
$$C_3: = C_3(q, X,  \omega, \int_X e^F\Phi(F)\omega^n, \b, \phi, D, \max_X|\Theta| ) $$
such that the following holds:
\begin{equation}
\label{c0-003}
\int_X e^{qF} \omega^n \leq C_3;\ \ \ || \vp ||_0 \leq C_3; \ \ \ ||\psi_1||\leq C_3.
\end{equation}
\end{corollary}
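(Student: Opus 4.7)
The plan is to derive all three bounds in the corollary from the single inequality in Theorem \ref{main-thm-001}, using the normalization $\sup_X\vp=0$ together with standard tools: Tian's $\a$-invariant integrability (Lemma \ref{c0-lem-001}) and Kolodziej's $L^\infty$-estimate for complex Monge--Amp\`ere equations.

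\textbf{Step 1 (integrability of $e^F$).} Since $\vp\leq 0$, Theorem \ref{main-thm-001} yields
$$F+\ep\psi_1\leq C_2,\qquad\text{hence}\qquad F\leq C_2-\ep\psi_1.$$
Given $q>0$, pick $\ep>0$ small enough that $q\ep<\a$, where $\a$ is the constant from Lemma \ref{c0-lem-001} associated to $(X,\omega)$. Because $\psi_1$ is $\omega_\b$-psh with $\sup_X\psi_1=0$, the function $\psi_\b+\psi_1$ is $\omega$-psh and uniformly bounded above; applying Lemma \ref{c0-lem-001} (to $\psi_\b+\psi_1$ after subtracting its supremum) and using the uniform $L^\infty$-bound on $\psi_\b$ gives $\int_X e^{-q\ep\psi_1}\omega^n\leq C$. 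Multiplying through yields
$$\int_X e^{qF}\omega^n\leq e^{qC_2}\int_X e^{-q\ep\psi_1}\omega^n\leq C_3.$$

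\textbf{Step 2 ($C^0$-bound on $\vp$).} Set $\tilde\vp:=\psi_\b+\vp$, an $\omega$-psh function satisfying
$$(\omega+dd^c\tilde\vp)^n=e^F\omega_\b^n=\frac{e^{F+f}}{\prod_{k=1}^d|s_k|^{2-2\b_k}}\,\omega^n,$$
thanks to the conic Calabi--Yau equation (\ref{csck-000}). Since $\b_k\in(0,1)$, the factor $\prod|s_k|^{-(2-2\b_k)}$ lies in $L^{1+\eta_0}(\omega^n)$ for some $\eta_0>0$. Combining this with Step 1 via H\"older's inequality (choosing $q$ large and $\eta>0$ small accordingly), the density on the right hand side is bounded in $L^{1+\eta}(\omega^n)$. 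Kolodziej's $L^\infty$-estimate \cite{Kol} then produces a uniform bound $||\tilde\vp||_0\leq C$; since $\psi_\b$ is bounded, we obtain $||\vp||_0\leq C_3$.

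\textbf{Step 3 ($C^0$-bound on $\psi_1$).} The defining equation (\ref{pe-002}) for the auxiliary potential, rewritten with respect to $\omega^n$, reads
$$(\omega+dd^c(\psi_\b+\psi_1))^n=\frac{e^{F+f}\,\Phi(F)}{\int_X e^F\Phi(F)\,\omega_\b^n\cdot\prod_{k=1}^d|s_k|^{2-2\b_k}}\,\omega^n.$$
Because $\Phi(F)=\sqrt{F^2+1}$ has at most linear growth in $F$, Step 1 gives an $L^q(\omega^n)$-bound on $e^F\Phi(F)$ for every finite $q$, while the denominator $\int_X e^F\Phi(F)\,\omega_\b^n$ is bounded below (it is $\geq \int_X e^F\omega_\b^n=\int_X\omega_\b^n$ by Jensen and the normalization). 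The same H\"older argument as in Step 2 places the full density in $L^{1+\eta}(\omega^n)$, and Kolodziej's theorem gives $||\psi_1||_0\leq C_3$.

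\textbf{Anticipated difficulty.} The only delicate point is Step 2: the right hand side of the Monge--Amp\`ere equation for $\tilde\vp$ carries the conic weight $\prod|s_k|^{-(2-2\b_k)}$, which is $L^{1+\eta_0}(\omega^n)$ only for $\eta_0<\min_k\frac{\b_k}{1-\b_k}$. One must verify that $\ep$ can simultaneously be taken small enough to run the integrability of $e^{-\ep\psi_1}$ via Tian's $\a$-invariant \emph{and} large enough (through the choice of $q$) that H\"older's inequality against the conic weight still yields a density in some $L^{1+\eta}$. Since the two constraints are independent (one controls $\ep$, the other controls $q$), this is only a bookkeeping issue, but it is the step where the geometric hypotheses on the angles $\b_k$ are essential.
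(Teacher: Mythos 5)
Your proof is correct and follows essentially the same route as the paper: Step 1 fills in the Chen--Cheng pointwise bound $qF\leq qC_2-q\ep\psi_1$ together with Tian's $\a$-invariant (exactly the paper's ``pick $\ep=\a/q$''), Step 2 is the paper's H\"older-against-the-conic-weight argument followed by Kolodziej's $L^\infty$-estimate for $\tilde\vp=\psi_\b+\vp$, and Step 3 is the same bound applied to $\psi_\b+\psi_1$ using $\Phi(F)\leq e^{\ep_1 F}$ (or, as you put it, at most linear growth). The bookkeeping between $\ep$ and the conic integrability threshold that you flag as the delicate point is indeed handled in the paper by the explicit choice $1<p<\min_k(1-\b_k)^{-1/2}$ and $q=p+p'$, but your observation that the two constraints decouple is the right way to see it.
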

\begin{proof}
Combing with Lemma (\ref{c0-lem-001}) and Theorem (\ref{main-thm-001}),
the uniform $L^q(\omega^n)$ estimate for the function $e^F$ follows exactly like the argument in Chen-Cheng \cite{CC1}, by picking up $\ep = \a/q$.
However, in order to prove the $L^{\infty}$ bound of the potential, we need the following argument. 

Re-write the Monge-Amp\`ere equation (\ref{pe-001}) as 
$$ (\omega + dd^c (\psi_\b + \vp))^n = \mu \omega^n,$$
where $\mu: =  \frac{e^F\omega^n_\b}{\omega^n}$ is the density function. 
Taking some $1< p < \min_k (1- \b_k)^{-\frac{1}{2}}$, the $L^p$-norm of $\mu$ can be estimated by the H\"older inequality 
\begin{eqnarray}
\label{c0-004}
\int_X e^{pF}  \frac{dV}{\prod_{k=1}^d |s_k|^{2p(1-\b_k)}} &\leq&
\left( \int_X e^{ (p+p') F} dV \right)^{1/p'} \left( \int_X  \frac{dV}{\prod_{k=1}^d |s_k|^{2p^2(1-\b_k)}}  \right)^{1/p}
\nonumber\\
&\leq& C_4(\b) \left( \int_X e^{ q F} dV \right)^{1/p'},
\end{eqnarray}
where we choose $\frac{1}{p} + \frac{1}{p'}=1$ and $q: = p + p'$. 
Finally, by the work of Kolodziej \cite{Kol} and Benelkourchi-Guedj-Zeriahi \cite{BGZ},
the $L^{\infty}$-norm is controlled as 
$$ 0\leq ||\vp + \psi_\b ||_0 \leq C_5 || \mu ||^{1/n}_{L^p (\omega^n)}, $$
where the constant $C_5$ only depends on $p$ and $\omega$.
Then our result follows since $\psi_\b$ is uniformly bounded.
Moreover, for the auxiliary function $\psi_1$, we have the same $L^{\infty}$-estimate,
since $\sqrt{F^2+1}$ is controlled by $e^{\ep_1 F}$ for any small $\ep_1 >0$. 

\end{proof}

For the proof of Theorem (\ref{main-thm-001}), we first run as Chen-Cheng's argument \cite{CC1}.
Take $u_1: = e^{\delta A_1} $ and $A_1(\vp, F): = F + \ep\psi_1 - \lambda \vp$.
Here the constants are determined as 
$$ \lambda: = 2(1+ \max_X |\Theta|),\ \ \  \delta: = \frac{\a}{2n\lambda},$$
where $\a$ is the small constant appearing in Lemma (\ref{c0-lem-001}).

Let $p_0$ be the maximum point of the function $u$.
For some $d>0$ small enough, we can consider a coordinate ball $B_{d}(p_0)$ around $p_0$ with radius $d$.
Take $\eta_{p}$ be a cut-off function such that $\eta_p(p)=1$ and $\eta_p = 1-\theta$ outside the ball $B_{d/2}(p)$,
with the estimate $|\nabla \eta_p|^2 \leq 4\theta^2 d^{-2}$ and $|\nabla^2\eta_p|\leq 4\theta d^{-2}$.
This small positive constant $\theta$ will only depend on $\a$ and $d$.

Suppose that the ball $B_{p_0}(d)$ is away from $\text{Supp}(D)$.
Then the estimate (\ref{c0-002}) holds for some constant, by applying AMP to the function $u\eta_{p_0}$.
However, this constant will depend on the diameter $d$, and it grows like $d^{-1}$ when the ball is closer and closer to the divisor.

Therefore, we need to introduce a new auxiliary function as 
$$\psi_2: = \sum_{k=1}^d |s_k|^{2\g_k}.$$


Put $u: = e^{\delta A}$, where we define 
$$A(\vp, F): = F + \ep (\psi_1 + \psi_2) -\lambda \vp. $$
Let $x_0$ be the maximum point of the function $u$ on the manifold,
and we can assume $B_{d}(x_0)\bigcap \text{Supp}(D)\neq \emptyset$ for some fixed radius $d$ small. 
Then there exists an open coordinate system $U$ such that $B_{2d}(x_0)\subset U $,
and the defining function of $\text{Supp}(D)$ is $\{z_1\cdots z_d =0 \}$ in $U$. 

We re-wirte the new auxiliary function $\psi_2$ on $U$ as  
$\sum_{k=1}^d  ( |z_k|^{2}e^{-\phi_k} )^{\g_k}$, 
and its complex Hessian $dd^c\psi_2$ can be explicitly calculated as 
\begin{equation}
\label{c0-006}
\sum_{p=1}^d \g_p^2 e^{-\phi_p} \frac{  dz_p\wedge d\bar z_p +   2\Re \left\{ \sum_{q=1}^n  o(z_p) dz_q\wedge d\bar z_p \right\} + \sum_{q,l=1}^n o( |z_p|^2)  dz_q\wedge d\bar z_l           } {(  |z_p|^2e^{-\phi_p} )^{1-\g_p} }.
\end{equation}
Therefore, we have the following estimate near the divisor 
\begin{equation}
\label{c0-007}
C_6 \omega_{Euc}+ dd^c\psi_2 \geq C^{-1}_6 \left( \sum_{k=1}^d \frac{dz_k \wedge d\bar z_k}{|z_k|^{2-2\g_k}} + \sum_{j = d+1}^n dz_j\wedge d\bar z_j \right),
\end{equation}
for some constant $C_6$ only depending on the angle $\g$ and the hermitian metric $\phi$.

The complex Hessian function of $\psi_2$ grows very fast to $+\infty$ near the divisor, 
and this gives us a chance to avoid its upper contact set. 

\begin{lemma}
\label{c0-lem-002}
Let $\tg^+$ be the upper contact set of the function $u\eta_{x_0}$ on $U$.
Then there exists an open neighbourhood $V_D$ of $\text{Supp}(D)\bigcap U$, such that $\tg^+ \bigcap V_D =\emptyset$. 
\end{lemma}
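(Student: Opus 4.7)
The strategy is to exploit the fact that $dd^c\psi_2$ blows up strictly faster than the competing conic Hessians near the divisor, for an appropriate choice of angles $\g_k$ (specifically $\g_k < \b_k/2$). This forces the complex Hessian of $u\eta_{x_0}$ to acquire a large positive eigenvalue near the divisor, which contradicts the negative semi-definiteness required at any $C^2$-point of $\tg^+$.

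In detail, for $x \in U \sm \text{Supp}(D)$ sufficiently close to the divisor, membership in $\tg^+$ forces the real Hessian $D^2(u\eta_{x_0})$ to be negative semi-definite at $x$, hence so is its $(1,1)$-part. Expanding
$$dd^c(u\eta_{x_0}) = \eta_{x_0}\, dd^c u + u\, dd^c\eta_{x_0} + \d u\wedge\dbar\eta_{x_0} + \d\eta_{x_0}\wedge\dbar u,$$
and using $dd^c u = \delta u\, dd^c A + \delta^2 u\, \d A\wedge\dbar A$ (the last term being non-negative), the leading positive contribution comes from $\ep\eta_{x_0}\delta u\, dd^c\psi_2$, whose $(\d_{z_k},\d_{\bar z_k})$-entry is of order $|z_k|^{2\g_k - 2}$ by (\ref{c0-007}). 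All competing contributions are dominated: by the $\cC^{2,\a,\b}$-regularity of $F$, $\psi_1$, and $\vp$, the Hessians $dd^c F, dd^c\psi_1, dd^c\vp$ are bounded by multiples of $|z_k|^{2\b_k - 2}$, and the cutoff pieces $u\, dd^c\eta_{x_0}$ and $\d\eta_{x_0}\wedge\dbar u$ contribute at most $|z_k|^{2\g_k - 1}$ (the worst term coming from $\d\psi_2$). For $\g_k < \b_k$ and $|z_k|$ small enough, $(dd^c(u\eta_{x_0}))(\d_{z_k},\d_{\bar z_k}) > 0$, contradicting semi-negativity and thus excluding a uniform punctured neighbourhood of $\text{Supp}(D)\cap U$ from $\tg^+$.

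The main obstacle is a divisor point $x_* \in \text{Supp}(D) \cap U$ itself, where $u\eta_{x_0}$ is only continuous and the Hessian argument does not apply. To rule out $x_* \in \tg^+$ one argues directly from the upper contact inequality: by $\cC^{2,\a,\b}$-regularity, the increments $F(x) - F(x_*)$, $\psi_1(x) - \psi_1(x_*)$ and $\vp(x) - \vp(x_*)$ grow at most like $|z_k|^{\b_k}$ along the transverse direction, whereas $\psi_2(x) - \psi_2(x_*)$ grows precisely like $|z_k|^{2\g_k}$. Choosing $2\g_k < \b_k$ renders the $\psi_2$-contribution both super-linear (cusp-like) and dominant, so that $u\eta_{x_0}(x) - u\eta_{x_0}(x_*) \geq c|z_k|^{2\g_k}$ for small $|z_k|$ along some transverse direction. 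Since no affine function $p\cdot (x - x_*)$ can dominate such a cusp as $|z_k| \to 0$, we conclude $x_* \notin \tg^+$. Intersecting the two exclusion regions yields the desired open $V_D \supset \text{Supp}(D)\cap U$.
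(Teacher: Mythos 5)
Your argument is correct, and for the points actually on $\text{Supp}(D)$ it is genuinely different from the paper's. Let me compare the two.

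For points of $\tg^+$ off the divisor, your argument is essentially the paper's in disguise: you show a diagonal entry of $dd^c(u\eta_{x_0})$ is forced positive near $D$, whereas the paper takes the trace and shows $\Delta(u\eta_{x_0}) > 1$ there; both contradict the semi-negativity of the Hessian at a $C^2$ contact point, and both rest on the dominance $|z_k|^{2\g_k-2} \gg |z_k|^{2\b_k-2}$ coming from $\g_k < \b_k$.

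For a contact point $x_*$ on the divisor itself — where the $C^2$ Hessian test is unavailable — the two proofs diverge. You argue locally and elementarily: the increment of $A$ transverse to $D_k$ is dominated by $\psi_2(x)-\psi_2(x_*) \sim |z_k|^{2\g_k}$, producing a strictly concave-from-below cusp $u\eta_{x_0}(x) - u\eta_{x_0}(x_*) \gtrsim |z_k|^{2\g_k}$, which no affine function can majorize near $x_*$ once $2\g_k < \min\{1,\b_k\} = \b_k$. The paper instead argues globally: since $\Delta(u\eta_{x_0}) > 0$ on a punctured neighbourhood, the competitor $v(y) = u\eta_{x_0}(y) + a\cdot(x_*-y)$ is strictly subharmonic there, extends to a subharmonic function across the divisor (removable singularity theorem for bounded subharmonic functions), attains an interior max at $x_*$, and is therefore constant — contradicting strict subharmonicity. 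Your route is more elementary and self-contained, but it purchases this at the cost of the stronger constraint $\g_k < \b_k/2$, whereas the paper's subharmonic-extension argument works with only $\g_k < \b_k$. Since the $\g_k$ are free parameters this strengthening is harmless for the later estimates, so both proofs are valid.

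One small inconsistency to fix: your opening and closing paragraphs require $\g_k < \b_k/2$, but the middle paragraph writes ``For $\g_k < \b_k$''; you should make clear that the off-divisor part needs only $\g_k<\b_k$ while the cusp argument at $x_*\in\text{Supp}(D)$ needs the stronger $\g_k<\b_k/2$.
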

\begin{proof}
By our construction, the function $u\eta_{p_0}$ is smooth outside the divisor, 
and then we can compute its Laplacian with respect to the Euclidean metric $\omega_{Euc}$ on $U\sm \text{Supp} D$ as
\begin{eqnarray}
\label{c0-008}
\Delta (e^{\delta A}\eta) &=& \Delta(e^{\delta A})\eta + e^{\delta A} \Delta\eta + 2\delta e^{\delta A} \nabla A \cdot \nabla \eta
\nonumber\\
&=& e^{\delta A}\eta \left( \delta \Delta A + \delta^2 |\nabla A|^2     \right) + e^{\delta A} \Delta\eta + 2\delta e^{\delta A} \nabla A \cdot \nabla \eta.
\end{eqnarray}
By the construction, we have 
\begin{equation}
\label{c0-009}
e^{\delta A} \Delta\eta \geq - e^{\delta A} 4\theta /d^2,
\end{equation}
and 
\begin{eqnarray}
\label{c0-010}
2\delta \nabla A \cdot \nabla \eta &\geq& -\delta^2\eta |\nabla A|^2 -  \eta^{-1} |\nabla\eta|^2
\nonumber\\
&\geq&  -\delta^2\eta |\nabla A|^2 -  \frac{4\theta^2}{d^2(1-\theta)}.
\end{eqnarray}
Moreover, since $\vp, F, \psi_\b\in\cC^{2,\a,\b}$, we see
\begin{eqnarray}
\label{c0-011}
\Delta (F + \ep\psi_1 + \ep \psi_2 - \lambda \vp) &\geq& \text{tr}_{\omega_{Euc}} \{  dd^c(F - \lambda\vp) - \ep\omega_{\b} \} + \ep \Delta \psi_2
\nonumber\\
&\geq&  - C_7 \left(\sum_{k=1}^d |z_k|^{2\b_k -2} + 1 \right) + \ep C^{-1}_7 \left( \sum_{k=1}^d |z_k|^{2\g_k -2}  \right),
\end{eqnarray}
for some constant $C_7$(may not be uniform). 
Eventually, for chosen $\ep, \delta, \lambda$ and $\theta$, we have on $U\sm \text{Supp}(D)$
\begin{eqnarray}
\label{c0-012}
\Delta (e^{\delta A}\eta) &\geq& \delta \eta e^{\delta A} \Delta (F + \ep\psi_1 + \ep \psi_2 - \lambda \vp) -  e^{\delta A}  \left( \frac{4\theta^2}{d^2(1-\theta)} + \frac{4\theta}{d^2}   \right)
\nonumber\\
&\geq&  - C_8 \left(\sum_{k=1}^d |z_k|^{2\b_k -2} + 1 \right) + C^{-1}_8 \left( \sum_{k=1}^d |z_k|^{2\g_k -2}  \right),
\end{eqnarray}
for some constant $C_8$. 

By picking up $\g_k < \b_k$,
there exists an open neighbourhood $V_D$ of the support of the divisor $D$ in $U$ such that 
$ \Delta (u\eta_{x_0}) > 1$ on $V_D\sm\text{Supp}(D)$. Therefore, the upper contact set $\tg^+$ is disjoint from the open set $V_D\sm\text{Supp}(D)$. 

Furthermore, we claim that $\text{Supp}(D)\bigcap \tg^+ =\emptyset$. 
Otherwise, suppose a point $p_0$ is in $ \text{Supp}(D)\bigcap \tg^+$,
and then there exists a vector $a\in \bR^{2n}$, such that 
$$ u\eta_{x_0}(y) \leq u\eta_{x_0}(p_0) + a\cdot (y-p_0),$$
for all $y\in U$. Define a new function on $V_D$ as 
$$ v(y): =  u\eta_{x_0}(y) + a\cdot (p_0 -y).$$
By our construction, this function $v$ obtains its maximum at the point $p_0$, and it is continuous on $V_D$.
Moreover, the function $v$ is strictly subharmonic on $V_D\sm \text{Supp}(D)$ since its real Laplacian $\Delta v = \Delta (u\eta_{x_0})$ is positive there.

In fact, $v$ is even subharmonic on the whole $V_D$ by the extension theorem of subharmonic functions. 
Thanks to the maximum principle, it must be a constant in $V_D$, but this contradicts the fact that $v$ is strictly subharmonic outside the divisor.

\end{proof}

\begin{proof}[Proof of Theorem (\ref{main-thm-001})]
In order to apply the maximum principle to our function $u\eta_{x_0}$, we compute the Laplacian $\Delta_{\vp} (u\eta_{x_0})$ outside the divisor.
The calculation is very similar with Chen-Cheng's work \cite{CC1}, and 
the only difference is that our background metric is $\omega_\b$, a conic metric this time. However, observe that we have 
$ Ric(\omega_\b) = \Theta$
outside the divisor. Therefore, we obtain 
\begin{equation}
\label{c0-1200}
\begin{split}
& \Delta_\vp (F + \ep\psi_1 + \ep\psi_2 - \lambda\vp) \geq \left\{ - \ul{R}_\b - \lambda n + \ep n I_{\Phi}^{-\frac{1}{n}} \Phi^{\frac{1}{n}}(F)  \right\} 
\\
& + \ep\Delta\psi_2 + (\lambda - \ep - |\Theta| ) \tr_\vp g_\b,
\end{split}
\end{equation}
where $I_{\Phi}: = \int_X e^F\Phi(F)\omega_{\b}^n$.
Here we used the following inequality 
$$ \Delta_\vp \psi_1 \geq n\left( e^{-F} e^F \Phi(F) I_{\Phi}^{-1} \right)^{\frac{1}{n}} - \tr_\vp g_\b.$$

Moreover, for some large integer $N$, we have the Donaldson metric
$$ \omega_{D,\g} = \omega + N^{-1}dd^c\psi_2 >0,$$
with cone angle $\g_k$ along $D_k$. Therefore, we see 
\begin{equation}
\label{c0-0115}
\ep \Delta_{\vp}\psi_2 \geq -\ep N \text{tr}_{\omega_{\vp}} \omega \geq  -\ep N_1 \text{tr}_{\omega_{\vp}} \omega_\b.
\end{equation}
Here the constants $N$ and $N_1$ only depend on $\omega, \omega_{\b}, X, D$ and the hermitian metric $\phi$.
Then we may assume $\ep N_1 < 1$, and the following inequality holds: 
\begin{equation}
\label{label_01201}
\begin{split}
& \Delta_\vp \left( e^{\delta A}\eta_{x_0}   \right) \geq \delta\eta_{x_0} e^{\delta A} \left(- \underline{R}_\b - \lambda n + \ep n I_{\Phi}^{-\frac{1}{n}} \Phi^{\frac{1}{n}}(F) \right)
\\
& + e^{\delta A} \left( \delta\eta_{x_0} (\lambda -\ep - |\Theta| -1) - \frac{4\theta}{d^2} - \frac{4\theta^2}{d^2(1-\theta)}  \right)\tr_\vp g_\b.
\end{split}
\end{equation}
Recall these constants are taken as  
$ \lambda: = 2(1+ \max_X |\Theta|)$, and $\delta: = (2n\lambda)^{-1}\a$,
and then we choose the constant $\theta >0$ small enough to satisfy 
$$ \frac{(1-\theta)\a}{4n} - \frac{4\theta}{d^2} - \frac{4\theta^2}{d^2(1-\theta)} \geq 0.$$
This implies the following equation on $U\sm \text{Supp}(D)$
\begin{equation}
\label{c0-014}
\Delta_{\vp} (u\eta_{x_0}) \geq \delta\eta_{x_0} e^{\delta A} (- \underline{R}_\b - \lambda n + \ep n I_{\Phi}^{-\frac{1}{n}} \Phi^{\frac{1}{n}}(F)).
\end{equation}

Thanks to Lemma (\ref{c0-lem-002}), the upper contact set $\tg^+$ of the continuous function $u\eta_{x_0}$ in the ball $B_d(x_0)$
is contained in the open subset $B_d(x_0)\sm \ol{V_D}$, which is away from the divisor. 
Then we are ready to apply GAMP in the ball to have: 
\begin{eqnarray}
\label{c0-015}
&&\sup_{B_d(x_0)} u\eta_{x_0} \leq \sup_{\d B_d(x_0)} u\eta_{x_0} 
\nonumber\\
&+& C_n d_0 
\left( \int_{B_d(x_0)\bigcap \Omega^- } e^{2F} u^{2n}  (- \underline{R}_\b - \lambda n + \ep n I_{\Phi}^{-\frac{1}{n}} \Phi^{\frac{1}{n}})^{2n} \omega^n  \right)^{\frac{1}{2n}},
\nonumber\\
\end{eqnarray}
where $\Omega^-$ denote the set 
$$ \Omega^-: = \{ x\in B_d(x_0); \ \  - \underline{R}_\b - \lambda n + \ep n I_{\Phi}^{-\frac{1}{n}} \Phi^{\frac{1}{n}}< 0  \}. $$
Then there exists a constant $C_9$ only depending on $\ep, I_{\Phi}$ and the smooth metric $\omega$ such that $F < C_9$ on $\Omega^-$.
As in Chen-Cheng \cite{CC1}, the last term is eventually bounded by the integral 
\begin{eqnarray}
\label{c0-016}
(|\underline{R}_\b| +\lambda n)^{2n} e^{C_9(2n\delta +2)} \int_{B_d(x_0)} e^{-\a\vp} \omega^n \leq C_{10},
\end{eqnarray}
by Lemma (\ref{c0-lem-001}). Therefore, we obtain 
$$ \sup_X u = u\eta (x_0) \leq (1-\theta) \sup_X u + C_nd\cdot C_{10}, $$
and then $\sup_X u \leq \theta^{-1} C_nd\cdot C_{10}$.
Finally our result follows since the function $\psi_2$ is uniformly bounded on $X$.

\end{proof}

Eventually, the $C^0$-norm of the potential $|| \vp ||_0$ is controlled by the conic entropy $H_\b(\vp): = \int_X F e^F \omega_\b^n$,
by the equivalence between the integral $\int_X e^F \sqrt{F^2+1}\omega_\b^n$ and $H_\b$ as in \cite{CC1}.

\subsection{Non-degeneracy estimate}
The uniform upper bound of the volume form radio $F: = \log\frac{\omega^n_{\vp}}{\omega^n_{\b}}$ is easily obtained from the inequality (\ref{c0-002}).
In fact, for a fixed $\ep_0$ small enough, we have 
$$F \leq C_2 -\ep_0 \psi_1,$$
and the result follows from Corollary (\ref{c0-cor-001}).

The last issue is the lower bound of $F$, but we can use GAMP again as follows.
\begin{lemma}
\label{nd-lem-001}
There exists a constant $C_{11}$ satisfying 
\begin{equation}
\label{nd-001}
F \geq - C_{11},
\end{equation}
where the constant depends on 
$$C_{11}: = C_{11}( ||\vp||_0, X,  \omega, \b, \phi, D, \max_X|\Theta| ). $$ 
\end{lemma}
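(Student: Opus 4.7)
The plan is to run the GAMP argument of Theorem \ref{main-thm-001} once more, this time applied to the test function
\begin{equation*}
u := -F - \mu\vp + \ep\psi_2,
\end{equation*}
whose supremum controls $-F$ from above. Here $\mu := \max_X |\Theta|_{g_\b} + 1$ is fixed, and $\ep>0$ is chosen small enough that $\ep N_1 < \tfrac{1}{2}$, where $N_1$ is the constant from (\ref{c0-0115}). First I would compute on $X\sm \text{Supp}(D)$, using the cscK equation $\Delta_\vp F = -\underline R_\b + \tr_\vp\Theta$, the identity $\Delta_\vp\vp = n-\tr_\vp g_\b$, the bound $-\tr_\vp\Theta \geq -(\mu-1)\tr_\vp g_\b$ coming from the choice of $\mu$, and (\ref{c0-0115}), to obtain
\begin{equation*}
\Delta_\vp u \;\geq\; \underline R_\b - \mu n + \tfrac{1}{2}\tr_\vp g_\b \;\geq\; \underline R_\b - \mu n + \tfrac{n}{2}\, e^{-F/n},
\end{equation*}
the last step being AM--GM. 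Since $\vp$ and $\psi_2$ are uniformly bounded (by Corollary \ref{c0-cor-001} and by construction), this translates to $\Delta_\vp u \geq c_1 e^{u/n} - C_1$ with uniform $c_1, C_1 > 0$.

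Next, let $x_0$ be a maximum point of the continuous function $u$ on $X$, and choose a coordinate chart $U$ containing $B_{2d}(x_0)$ such that $\text{Supp}(D)\cap U = \{z_1 \cdots z_d = 0\}$, together with the cut-off $\eta_{x_0}$ of Theorem \ref{main-thm-001}. I claim, exactly as in Lemma \ref{c0-lem-002}, that the upper contact set $\tg^+$ of $u\eta_{x_0}$ on $B_d(x_0)$ is disjoint from $\text{Supp}(D)$. Indeed, since $\vp, F \in \cC^{2,\a,\b}$ their Euclidean complex Hessians grow at most like $\sum_k |z_k|^{2\b_k - 2}$ near the divisor, whereas (\ref{c0-007}) and the choice $\g_k < \b_k$ make $\ep\, dd^c\psi_2$ dominate, so the real Laplacian $\Delta(u\eta_{x_0}) > 0$ in a punctured neighbourhood of $\text{Supp}(D)\cap U$; the subharmonic-extension argument of Lemma \ref{c0-lem-002} then rules out any contact point on the divisor itself.

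Finally, working with $v := e^{\delta u}$ exactly as in (\ref{c0-008})--(\ref{label_01201}), the cross-gradient terms produced by the cut-off are absorbed using the surplus $\tfrac{1}{2}\tr_\vp g_\b$ in the lower bound on $\Delta_\vp u$, yielding on $\tg^+$
\begin{equation*}
\Delta_\vp(v\eta_{x_0}) \;\geq\; \delta\,\eta_{x_0}\, v\bigl(c_1\, e^{u/n} - C_2\bigr).
\end{equation*}
Applying GAMP (Theorem \ref{amp-thm-001}) to $v\eta_{x_0}$ on $B_d(x_0)$, and using the already-established upper bound $F \leq C_{10}$ from Corollary \ref{c0-cor-001} to control $\mathfrak{D}^{*-2n} \lesssim e^{2F}$, the resulting $L^{2n}$-integral in (\ref{amp-007}) is absorbed by Tian's $\a$-invariant exactly as in (\ref{c0-015})--(\ref{c0-016}). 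After the usual reabsorption $\sup_X(u\eta_{x_0}) \geq (1-\theta)\sup_X u$ this yields the uniform bound $\sup_X u \leq C$, and the boundedness of $\mu\vp$ and $\ep\psi_2$ gives $F \geq -C_{11}$.

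The main obstacle is, as in Theorem \ref{main-thm-001}, the singular behaviour of $F$ across the divisor, which invalidates the classical AMP; this is bypassed once more by the ``extremely pseudo-convex'' $\psi_2$, whose complex Hessian blows up faster than those of $\vp$ and $F$ near the divisor and therefore forces $\tg^+$ off the singular locus. The only new technical point to verify is that the negative part of the Laplacian inequality above is a uniform multiple of $v$, which is precisely what the $\tfrac{1}{2}\tr_\vp g_\b$ surplus guarantees.
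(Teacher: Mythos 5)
Your proposal follows essentially the same route as the paper's own proof of Lemma \ref{nd-lem-001}: take $A_2 = -F-\lambda\vp + \ep\psi_2$, exponentiate, verify via the $\psi_2$ blow-up that the upper contact set of the localized function stays off $\text{Supp}(D)$ exactly as in Lemma \ref{c0-lem-002}, and then invoke GAMP. The only cosmetic differences are (a) you pick $\mu = \max|\Theta|+1$ instead of the paper's $\lambda = 2(\max|\Theta|+1)$ (both leave a surplus $\tfrac{1}{2}\tr_\vp g_\b$ to absorb the cut-off and the $\ep N_1$ loss), (b) you insert an AM--GM step to recast the lower bound for $\Delta_\vp u$ in the coercive form $c_1 e^{u/n} - C_1$ (the paper keeps the constant form $-\delta e^{\delta A_2}(\ul R_\b + \lambda n)$, which is enough), and (c) you invoke Tian's $\a$-invariant to control the final $L^{2n}$ integral, whereas the paper observes that since $\|\vp\|_0$ is already known (and the lemma's constant is allowed to depend on it), the exponential factor $e^{2n\delta\lambda\vp}$ is simply bounded and the integral reduces to $\int e^F\omega^n$. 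Point (c) is a harmless overkill rather than an error.
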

\begin{proof}
As before, we consider a function 
$$ A_2(F, \vp):= - F - \lambda\vp + \ep_2\psi_2, $$
and put $u_2 = e^{\delta A_2}$. 
Assume the function $u_2$ achieves its maximum at the point $x_1$.
Pick up 
$$\lambda: = 2(\max_X|\Theta| +1);\ \ \  \delta:= \frac{1}{2n}; \ \ \  \ep_2 = \frac{1}{2N_1}, $$
where $N_1$ is the uniform constant in equation (\ref{c0-0115}).
Then we have 
\begin{eqnarray}
\label{nd-002}
\Delta_{\vp} (F+\lambda \vp - \ep_2\psi_2 ) &=& \left( \tr_{\vp}\Theta - \underline{R}_{\b}  - \lambda \text{tr}_{\vp}\omega_{\b} \right) + \lambda n - \ep_2\Delta_{\vp}\psi_2
\nonumber\\
&\leq& - \text{tr}_{\vp}\omega_\b + \underline{R}_{\b} + \lambda n + \ep_2N_1 \text{tr}_{\vp}\omega_{\b},
\nonumber\\
&\leq& - \frac{1}{2}\text{tr}_{\vp}\omega_\b + \underline{R}_{\b} + \lambda n 
\end{eqnarray}
outside the support of the divisor. 
Following Chen-Cheng's calculation, we further see
\begin{equation}
\label{nd-002}
\Delta_{\vp} (u_2\eta_{x_1}) \geq  \delta e^{\delta u_2}\left\{ \text{tr}_{\vp} g \left(\frac{1}{2}\delta\eta_{x_1} - \frac{2\theta}{d^2} -\frac{4\theta^2}{d^2(1-\theta)} \right)    
- \delta(\underline{R}_{\b} + \lambda n) \right\}.
\end{equation}
Choose $\theta$ sufficiently small to satisfy 
$$\frac{1-\theta}{2}\delta- \frac{2\theta}{d^2} -\frac{4\theta^2}{d^2(1-\theta)}  \geq 0, $$
and then we have 
\begin{equation}
\label{nd-003}
\Delta_{\vp} (u_2\eta_{x_1}) \geq -\delta e^{\delta A_2}( \ul{R}_{\b} + \lambda n). 
\end{equation}

Now observe that the function $u_2\eta_{x_1}$ is strictly subharmonic in an open neighbourhood of the divisor,
by the same argument as in Lemma (\ref{c0-lem-002}).
Then there exists an open subset $V$ of the ball $B_{d}(x_1)$ completely disjoint from the divisor,
such that the upper contact set $\tg^+_{(u_2\eta_{x_1})}$ of the function $u_2\eta_{x_1}$ is contained in $V$.
Therefore, we can apply GAMP to this function on the ball $B_d(x_1)$
\begin{eqnarray}
\label{nd-004}
&&e^{\delta A_2}\eta_{x_1}(x_1) \leq \sup_{\d B_d(x_1)} e^{\delta A_2}\eta_{x_1}
\nonumber\\
&+& C_n d \left( \int_X e^{2F} e^{-2n\delta A_2} (\ul{R}_{\b} + \lambda n)^{2n}\omega^n \right)^{\frac{1}{2n}}.
\end{eqnarray}
However, this integral is bounded by the following 
\begin{equation}
\label{nd-005}
 \int_X e^{2F} e^{-2n\delta A_2} (\ul{R}_{\b} + \lambda n)^{2n}\omega^n \leq C_{12} \int_X e^{(2-2n\delta) F} \omega^n \leq C_{12}\int_X e^F \omega^n,
\end{equation}
and our result follows.

\end{proof}

\begin{rem}
\label{c0-rem-001}
During the proof of the a priori estimates, the regularity condition $(\vp, F)\in \cC^{2,\a,\b}$ is more than enough.
In fact, we can prove our results by only assuming $(\vp, F)\in \cC^{1,\bar 1}_{\b}$. 
\end{rem}

\begin{rem}
\label{c0-rem-002}
The constant $C_2$ and $C_3$ depend on many things as listed before, but they do not actually depend on the conic background metric $\omega_\b$.
In other words, if we switch our background metric to another conic metric $\tilde\omega_{\b}$ which is isometric to $\omega_\b$,
then the uniform estimate also works, 
with $\max_X|\Theta|$ replaced by $\max_{X\sm \text{Supp}(D)} | Ric(\tilde\omega_\b)|$.
\end{rem}

\section{The $W^{2,p}$ estimates}
\label{sec-004}
In this section, we want to demonstrate the estimates on the Laplacian of the potential $\vp$ for conic cscK equations.
Taking $Y: = X\sm \text{Supp}(D)$,
the idea is to first prove the $W^{2,p}(d\mu, Y)$ estimate for $\vp$ for some measure $d\mu$,
and then use the $W^{2,p}(d\mu, Y)$ norm to control the $L^{\infty}$-norm of the Laplacian. 

\begin{theorem}
\label{main-thm-2}
For any $p \geq 1$, there exists a constant $C_{14}$ satisfying 
\begin{equation}
\label{main2}
\int_{Y} ( \text{tr}_{\omega_\b} \omega_{\vp} )^{p} \omega_{\b}^n \leq C_{14},
\end{equation}
where this constant depends on
$$ C_{14}: = C_{14} ( \ p, ||\vp||_0, || F ||_0,  \omega_D, \omega_\b, X, D).$$
\end{theorem}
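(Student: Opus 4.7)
The plan is to adapt the integral iteration of Chen--He \cite{CH} and Chen--Cheng \cite{CC1}, which raises $L^p$-bounds on the complex Laplacian without recourse to a uniform Sobolev constant. Working directly with $\omega_\b$ is problematic because its holomorphic bisectional curvature is not uniformly bounded below near $D$, so the Chern--Lu type inequality that drives the iteration degenerates. My strategy is therefore to establish the estimate against the Campana--Guenancia--P\u aun smooth approximation $\tilde\omega_{D,\ep}$ of the Donaldson metric $\omega_D$, whose bisectional curvature admits a lower bound uniform in $\ep$, and then to transfer the result back to $\omega_\b$ via the quasi-isometry $\omega_\b \sim \omega_D$ on $X$, which preserves both the integrand $\tr\omega_\vp$ and the volume form up to multiplicative constants. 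This is precisely the ``switch of background metric'' announced in the paragraph following Theorem~\ref{intr-thm-002}.

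At the smooth level, let $\vp_\ep$ solve a smooth Monge--Amp\`ere approximation of equation (\ref{pe-001}) and set $F_\ep^D := \log(\omega_{\vp_\ep}^n/\tilde\omega_{D,\ep}^n)$. On the compact K\"ahler manifold $X$ with smooth metrics $\tilde\omega_{D,\ep}$ and $\omega_{\vp_\ep}$, the classical Chern--Lu inequality gives
$$
\Delta_{\vp_\ep}\log \tr_{\tilde\omega_{D,\ep}}\omega_{\vp_\ep} \ \geq\ -C_1\,\tr_{\omega_{\vp_\ep}}\tilde\omega_{D,\ep} \ +\ \frac{\Delta_{\vp_\ep} F_\ep^D}{\tr_{\tilde\omega_{D,\ep}}\omega_{\vp_\ep}},
$$
where $C_1$ depends only on the Guenancia--P\u aun bisectional-curvature bound and is therefore uniform in $\ep$. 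The conic cscK equation $\Delta_\vp F = -\ul{R}_\b + \tr_\vp \Theta$, together with the $C^0$ bound (Theorem~\ref{main-thm-001}) and the non-degeneracy bound (Lemma~\ref{nd-lem-001}) already established, supplies the control of the right-hand side needed to close the iteration.

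Next, I would set $u_\ep := e^{\lambda(F_\ep^D - \mu\vp_\ep)}\,\tr_{\tilde\omega_{D,\ep}}\omega_{\vp_\ep}$ with constants $\lambda,\mu>0$ chosen so that the unwanted $\tr_{\omega_{\vp_\ep}}\tilde\omega_{D,\ep}$ contribution is absorbed. Multiplying the Chern--Lu inequality by $u_\ep^{p-1}$ and integrating by parts against $\tilde\omega_{D,\ep}^n$ on the compact manifold $X$ yields, after Cauchy--Schwarz absorption of the cross terms, a recursive estimate of the form
$$
\int_X u_\ep^p\ \tilde\omega_{D,\ep}^n \ \leq\ C_2(p)\int_X u_\ep^{p-1}\ \tilde\omega_{D,\ep}^n\ +\ (\text{lower-order terms}),
$$
and iterating over $p$ in the style of Chen--He \cite{CH} produces the $L^p$-bound for every $1\leq p<\infty$. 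Taking $\ep\to 0$, the uniform $\cC^{1,\bar 1}_\b$-regularity of $\vp_\ep$ together with $\tilde\omega_{D,\ep}\to\omega_D$ gives $\tr_{\tilde\omega_{D,\ep}}\omega_{\vp_\ep}\to\tr_{\omega_D}\omega_\vp$ in $L^p_{\mathrm{loc}}(Y)$, and the uniform bound passes to the limit; quasi-isometry then yields the statement against $\omega_\b^n$.

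The main obstacle I anticipate is controlling the error produced by differentiating the weight $e^{\lambda(F_\ep^D-\mu\vp_\ep)}$, specifically the cross term coupling $\nabla_{\vp_\ep} F_\ep^D$ with $\nabla_{\vp_\ep}\tr_{\tilde\omega_{D,\ep}}\omega_{\vp_\ep}$, which must be absorbed \emph{uniformly in} $\ep$. This is precisely the point at which the switch from $\omega_\b$ to $\omega_D$ pays off: the Guenancia--P\u aun curvature bound keeps the absorption constants finite as $\ep\to 0$, whereas the analogous constant for $\omega_\b$ would blow up along $D$. Once this uniform absorption is verified, the remainder of the argument is a fairly direct transcription of the Chen--He / Chen--Cheng iteration, and the theorem follows.
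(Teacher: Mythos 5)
Your plan captures the right top-level idea (switch the background from $\omega_\b$ to $\omega_D$ because the bisectional curvature of $\omega_\b$ near $D$ is not understood, then run a Chen--He/Chen--Cheng iteration), and this matches the paper's strategy in broad strokes. But there is a genuine gap in how you use the Guenancia--P\u aun curvature estimate, and it is not a cosmetic one.

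You write that the absorption constant $C_1$ in the Chern--Lu inequality ``depends only on the Guenancia--P\u aun bisectional-curvature bound and is therefore uniform in $\ep$.'' That is a misreading of the Guenancia--P\u aun estimate. The bound proved in \cite{GP} and recalled in the paper as equation (\ref{w2p-002}) is \emph{not} a uniform constant lower bound; it is
$$ R_{i\bar i j\bar j}(\omega_{D}) \;\geq\; -\bigl(C_{16} + \Psi_{i\bar i}\bigr), $$
where $\Psi = \Psi_\g$ is a conic weight whose complex Hessian is unbounded as one approaches $D$. Consequently the Chern--Lu/GP inequality for $\log(n+\Delta\vp)$ carries an extra term $-\Delta_\vp\Psi$ (equation (\ref{w2p-006})), and it cannot be absorbed into a fixed multiple of $\tr_\vp g$. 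This is exactly why the paper builds the conic weight into the test function: the auxiliary quantity is $v = e^{A}(n+\Delta\vp)$ with $A = -\k(F+C\vp) + (\k+1)\Psi$, so that $\Delta_\vp A$ produces a $(\k+1)\Delta_\vp\Psi$ term cancelling the curvature contribution. Your $u_\ep = e^{\lambda(F_\ep^D - \mu\vp_\ep)}\tr_{\tilde\omega_{D,\ep}}\omega_{\vp_\ep}$ has no $\Psi$-weight at all, so the iteration would not close uniformly in $\ep$; the ``lower-order terms'' would blow up near $D$.

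There is a second, related omission. Once you rewrite the system against $\omega_D$, the function $F$ becomes $F = G + h$ with $h = \log(\omega_\b^n/\omega_D^n)$, and the scalar-curvature equation becomes $\Delta_\vp F = \tr_\vp(\Theta + dd^c h) - \ul R_\b$ (paper's equation (\ref{w2p-005})). The $dd^c h$ term is singular; it is only controlled via $-(C_{17}\omega_D + dd^c\Psi_\g) \leq dd^c h \leq C_{17}\omega_D + dd^c\Psi_\g$ (equation (\ref{w2p-003})), and again this is what the $\Psi$-weight in $A$ is designed to absorb. Your proposal keeps the $\omega_\b$ version of the scalar-curvature equation while writing the Chern--Lu inequality for $F_\ep^D$, so there is also an internal mismatch between the two halves of the argument.

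Finally, your approximation scheme assumes a smooth family $\vp_\ep$ solving a smooth perturbation of the coupled cscK system, with uniform $\cC^{1,\bar 1}_\b$-control. No such family is constructed in the paper, and building one for the fourth-order system is itself a nontrivial task. The paper sidesteps this entirely: it works with the given $\cC^{2,\a,\b}$ solution on $Y$, uses the Berndtsson-type cutoff $\theta_\ep$ supported away from $D$, and checks that the error terms from $d\theta_\ep\wedge d^c\theta_\ep$ vanish as $\ep\to 0$ (equations (\ref{w2p-007})--(\ref{w2p-0070})). That route avoids any existence statement for approximating solutions. If you keep your smooth-approximation strategy you need to justify the existence and uniform bounds for $\vp_\ep$; if you switch to cutoffs you still need to add the conic weight $\Psi_\g$ to the exponent. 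Either way, the crucial missing ingredient is the $\Psi$-weight that compensates the genuinely singular lower bound on $R_{i\bar i j\bar j}(\omega_D)$.
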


In Chen-Cheng's proof \cite{CC3} of the $W^{2,p}$ estimate, this constant $C_{14}$ actually is related to the lower bound of the bisectional curvature $R_{i\bar i j\bar j}$ 
of the background metric. However, the background metric $\omega_\b$ in our case is singular, 
and the growth of its bisectional curvature near the divisor is not clear up to now.
Therefore, we need to switch our background metric back to Donaldson's metric in this section, as in Guenancia-P\u aun \cite{GP}.
In fact, since the two conic metrics $\omega_{\b}$ and $\omega_{D}$ are quasi-isometric on $X$, it is enough to prove the following 
\begin{equation}
\label{w2p-001}
\int_{Y} ( \text{tr}_{\omega_D} \omega_{\vp} )^{p} \omega_{D}^n \leq C,
\end{equation}
for some uniform constant $C$(may depends on $p$). 

\subsection{Conic weight function}
Let $\Psi_{\g}$ be an auxiliary function defined on $X$ as 
$$ \Psi_{\g}:= C\sum_{k=1}^d |s_k|^{2\g}, $$
for some $\g < \min_k \min\{  \b_k, 1-\b_k \}$.
Then it is the smooth limit on $Y$ of the auxiliary function 
$$\Psi_{\g, \ep}: = C \sum_{k=1}^d \chi_{\g}(\ep^2 + |s_k|^2),$$
constructed in \cite{GP}.
Then the function $\Psi_\g$ is clearly $C_{15}\omega_D$-$psh$ for another uniform constant $C_{15}$, i.e. we have
\begin{equation}
\label{w2p-0015}
C_{15}\omega_D + dd^c \Psi_\g \geq 0,
\end{equation}
on $X$.
Let $\Theta_{\omega}(T_X)$ denote the Chern curvature tensor of $(T_X, \omega)$.
The following inequality is proved in \cite{GP}:
$$ \sqrt{-1}\Theta_{\omega_{D,\ep} }(T_X) \geq - (C_{16}\omega_{D,\ep} + dd^c \Psi_{\g, \ep})\otimes \text{Id}.$$
In a normal coordinate of the metric $\omega_{D,\ep}$, we can re-write the above inequality as 
$$ R_{i\bar i j\bar j}(\omega_{D,\ep}) \geq -(C_{16} + \Psi_{\ep, i\bar i} ); \ \ \ \text{and} \ \ \   R_{i\bar i j\bar j}(\omega_{D,\ep}) \geq -(C_{16} + \Psi_{\ep, j\bar j} ).$$
Therefore, the following holds on $Y$ in a normal coordinate of $\omega_D$:
\begin{equation}
\label{w2p-002}
R_{i\bar i j\bar j}(\omega_{D}) \geq -(C_{16} + \Psi_{ i\bar i} ); \ \ \ \text{and} \ \ \   R_{i\bar i j\bar j}(\omega_{D}) \geq -(C_{16} + \Psi_{ j\bar j} ),
\end{equation}
since everything converges smoothly outside the divisor. 
Moreover, if we put 
$$h_{\ep}:= -\log\left(  \frac{\prod_{k=1}^d (\ep^2 + |s_k|^2)^{1-\b_k} \omega^n_{D,\ep}}{dV} \right)$$
for some smooth volume form $dV$,
then the following also holds by the calculation in \cite{GP}
$$ C_{17}\omega_{D,\ep} + dd^c\Psi_{\g,\ep}  \geq dd^c h_{\ep} \geq -(C_{17}\omega_{D,\ep} + dd^c\Psi_{\g,\ep}),$$
for some uniform constant $C_{17}$. Taking the limit, we have the following estimate on $Y$
\begin{equation}
\label{w2p-003}
C_{17}\omega_D + dd^c \Psi_\g   \geq    dd^c h \geq - (C_{17}\omega_D + dd^c \Psi_\g),
\end{equation}
where the function
$h: = \log\left( \frac{\omega^n_{\b}}{\omega^n_D}\right) $
is defined on $Y$.
In fact, a direct computation shows  
\begin{equation}
\label{w2p-0030}
\ddbar |s_k|_{\phi_k}^{2-2\b_k} = \b_k^2 \frac{\d^{\phi_k} s_k \wedge\ol{\d^{\phi_k}s_k }e^{-\phi_k}  }{|s_k|_{\phi_k}^{2-2\b_k}} - \b_k |s_k|^{2\b_k}\ddbar\phi_k,
\end{equation}
and then one obtains 
\begin{equation}
\label{w2p0035}
e^{-h} dV = \sum_L \sum_{I,J} \left( \prod_{i\in I} |s_i|^{2-2\b_i}_{\phi_j} \right) \left( \prod_{j\in J} |\d^{\phi_j} s_j |^2_{\phi_j}  \right) 
\wedge\left(  \prod_{l\in L} |s_l |_{\phi_l}^{2\b_l} \ddbar\phi_l \right) \varrho,
\end{equation}
where $\{ I, J \}$ is any partition of the set $\{1,\cdots, d \}$,
$L$ is a subset of $\{ 1,\cdots, n \}$ with possibly repeating indices, 
and $\varrho$ is a smooth function.
At a point $p$ near the divisor, we can assume $\d\phi_k (p) = 0$ for all $1\leq k\leq d$.
Therefore, the growth of its gradient can be computed as  
$$ \d_k h =  O ( |z_k|^{-\max\{1-2\b_k, 2\b_k -1\} } )  $$
for $1\leq k \leq d$, and $\d_p h = O(1)$ for $ d < p \leq n$. 
Moreover, its complex Hessian can be estimated as 
$$ \d_{p}\d_{\bar p} h = O (|z_p|^{2\a_p});\ \ \ \d_p\d_{\bar q} h = O(|z_p|^{\a'_p} |z_q|^{\a'_q}),$$
where $\a_p \in \{1-\b_p, \b_p \}$ and $\a'_p \in \{ \frac{1}{2}-\b_p, \b_p - \frac{1}{2} \}$ for all $1\leq p, q \leq d$.

\subsection{Switching background metrics}
In the following, we will slightly change our notations.
Let $(\psi, G)$ be a $\cC^{2,\a,\b}$-conic cscK pair for $(X,D)$, i.e. they satisfy the coupled equations (\ref{pe-001}) and (\ref{pe-002}).

Take a new potential $\vp: = \psi + \psi_\b -\psi_D $,
and a new function $F: = G + h$.
The the new potential $\vp$ is also in $\cC^{2,\a,\b}$, and it satisfies 
$$ \omega_{\vp}: = \omega_D + dd^c \vp = \omega_\b + dd^c\psi, $$
and the function $F$ is uniformly bounded, but it may not be in $\cC^{2,\a,\b}$ anymore.
The two coupled cscK equations (\ref{csck-002}), (\ref{csck-003}) can be re-written as 
\begin{equation}
\label{w2p-004}
(\omega_D + dd^c \vp)^n = e^F \omega_D^n;
\end{equation}
\begin{equation}
\label{w2p-005}
\Delta_{\vp} F = \text{tr}_{\vp}(\Theta + dd^c h) - \ul{R}_{\b}.
\end{equation}

In fact, the $(1,1)$ closed form $(\Theta + dd^c h)|_{Y}$ is the restriction of the curvature $Ric(\omega_D)$ on $Y$.
From now on, we will adapt to the following conventions:
\begin{itemize}

\item denote $g, \nabla$ and $\Delta$ with respect to the background metric $\omega_D$;

\item denote $g_{\vp}, \nabla_{\vp}$ and $\Delta_{\vp}$ with respect to the target metric $\omega_{\vp}$.
\end{itemize}

In order to manipulate the integration by parts on $Y$, we need to introduce a suitable cut off function.
Let $\rho: X \rightarrow [-\infty, + \infty] $ be a function defined by
$$\rho (x):= \log\left( -\log(\prod_{k=1}^d |s_k(x)|^2 ) \right), $$
where we normalise the sections $\tau: = \prod_{k=1}^d |s_k(x)|^2 < e^{-1}$.
Let  $\eta_{\ep}: [0, +\infty ) \rightarrow [0,1]$ be a smooth non-decreasing function,
such that $\eta (x) =0$ for $x\in [0,1]$ and $\eta(x) = 1$ for $x\geq 2$. 
Then the following cut off function is considered in Berndtsson's work \cite{Bo1}
$$ \theta_\ep(x): = 1 - \eta(\ep\rho(x)),$$
and it is equal to $1$ whenever $\tau \geq e^{-e^{1/\ep}}$ and $0$ if $\tau \leq e^{-e^{2/\ep}}$.

Moreover, its gradient is 
\begin{equation}
\label{w2p-0060}
\d \theta_{\ep} =  \frac{\ep\eta'}{-\log\tau} \sum_{k=1}^d \left( \frac{\d^{\phi_k } s_k}{s_k} \right).
\end{equation}
The positive $(1,1)$ form $\d\theta_\ep\wedge \dbar \theta_\ep$ is only supported near the divisor, 
and we have its integrability with respect to the model cone metric
\begin{equation}
\label{w2p-0061}
\int_X  \frac{1}{(\log\tau)^2} \sum_{k, l=1}^d \left( \frac{\d^{\phi_k } s_k}{s_k} \right)\wedge \ol{ \left( \frac{\d^{\phi_l } s_l}{s_l} \right)}\wedge \omega_{cone}^{n-1 }< +\infty.
\end{equation}
Then it is easy to see that the following property  holds:
\begin{equation}
\label{w2p-007}
\int_{X} d\theta_\ep \wedge d^c \theta_\ep \wedge \Omega_\b^{n-1} \rightarrow 0,\ \ \ \text{as}\ \  \ep\rightarrow 0
\end{equation}
for any conic K\"ahler metric $\Omega_\b$ on $X$. 
Moreover, this implies the following 
\begin{equation}
\label{w2p-0070}
\int_X  d \theta_\ep \wedge d^c F \wedge  \Omega_\b^{n-1} \rightarrow 0,\ \ \ \text{as}\ \  \ep\rightarrow 0,
\end{equation}
for any function $F$ with $|\d F |^2_{\Omega_\b} \in L^2( \Omega_{\b}^n) $.
These properties will be crucial for our later calculation.

\begin{proof}[Proof of Theorem (\ref{main-thm-2})]
As discussed before, it is enough to prove the $W^{2,p}$ estimate 
with respect to the new background metric $\omega_D$
(equation (\ref{w2p-001})).
Let $\k>0, C>0$ be constants to be determined later. 
According to Guenancia-P\u aun's trick, the following Laplacian can be estimated on $Y$ as 
\begin{equation}
\label{w2p-006}
\Delta_{\vp} \log (n + \Delta\vp)     \geq  - C_{16} \text{tr}_{\vp} g + \frac{\Delta F}{ n + \Delta\vp} - \Delta_\vp \Psi,
\end{equation}
where $\Psi: = \Psi_\g$ is the conic weight function.
Denote a function $A(\vp, F): = -\k(F + C\vp) + (\k+1)\Psi$, and then compute 
\begin{eqnarray}
\label{w2p-006}
&& e^{-A}\Delta_\vp (e^A (n + \Delta\vp)) \geq  (n+ \Delta\vp) \Delta_{\vp} (A + \log(n+ \Delta\vp))
\nonumber\\
&\geq&  (n+ \Delta\vp) \big\{ (\k C - C_{16}) \tr_{\vp}g   - \k \tr_\vp\Theta + \k (\ul{R}_{\b} - Cn) 
\nonumber\\
&+& \k\cdot \tr_\vp (dd^c\Psi - dd^c h) + (n+ \Delta\vp)^{-1}\Delta F \big\}
\nonumber\\
&\geq& \frac{\k C}{4} \tr_\vp g (n+\Delta\vp) + \Delta F - \k C_{18}(n + \Delta\vp),
\end{eqnarray}
where we choose the constant $\k \geq 1$ and 
$$C: = 4(\max|\Theta|_g + C_{17} + C_{16} +1).$$
Here we used equations (\ref{w2p-002}) and (\ref{w2p-003}).

Let $p > 1$, and $0 < \delta < (p-1)/10$,  denote $v:= e^{A}(n+ \Delta\vp)$ as an $L^{\infty}$ function on $X$, and then we have
\begin{eqnarray}
\label{w2p-008}
&&(p-1)\int_X \theta_\ep^2 v^{p-2}  |\nabla_{\vp} v|^2_{\vp} \omega^n_\vp 
\nonumber\\
&=& \int_X \theta_\ep^2 v^{p-1}(-\Delta_\vp v)\omega_\vp^n 
- 2\int_X (v \nabla_\vp\theta_\ep ) \cdot_{\vp}  ( \theta_\ep\nabla_{\vp}v) v^{p-2}\omega^n_{\vp}
\nonumber\\
&\leq& \int_X \theta_\ep^2 v^{p-1}(-\Delta_\vp v)\omega_\vp^n  +  \delta \int_X \theta_\ep^2 v^{p-2} |\nabla_{\vp} v|^2_{\vp} \omega_{\vp}^n + \delta^{-1} {\rm{I}}_{\ep},
\end{eqnarray}
where the last term is 
$${\rm{I}}_{\ep}: = \int_X |\nabla_\vp \theta_\ep|^2_{\vp} v^{p}\omega_{\vp}^n = \int_X v^p d\theta_\ep\wedge d^c\theta_\ep \wedge \omega^{n-1}_{\vp}.$$
Moreover, we have 
\begin{eqnarray}
\label{w2p-009}
&&(p-1 -\delta)\int_X  \theta_\ep^2 v^{p-2}  |\nabla_{\vp} v|^2_{\vp} \omega^n_\vp 
\nonumber\\
&\leq& \delta^{-1} {\rm I}_\ep - \int_X \theta_\ep^2 v^{p-1} \left(\frac{\k C}{4} v \tr_\vp g + e^A \Delta F - \k C_{18} v \right).
\end{eqnarray}

We will handle the term involving $\Delta F$ as in Chen-Cheng \cite{CC3}
\begin{eqnarray}
\label{w2p-010}
&& - \int_X \theta_\ep^2 v^{p-1} e^{A} \Delta F \omega^n_\vp = -\int_X \theta_\ep^2 e^{(1-\k )F - \k C\vp + (\k +1)\Psi } \Delta F \omega_D^n
\nonumber\\
&=& - \int_X \theta_\ep^2 v^{p-1} e^{(1-\k )F - \k C\vp + (\k +1)\Psi } \frac{1}{1-\k} \Delta \big( (1-\k)F - \k C\vp + (1+\k)\Psi  \big)\omega_D^n
\nonumber\\
&-&   \int_X \theta_\ep^2 v^{p-1}e^{(1-\k )F - \k C\vp + (\k +1)\Psi }  \frac{  \k C \Delta\vp -(1+\k)\Delta\Psi}{1-\k}\omega_D^n.
\nonumber\\
\end{eqnarray}
Put $B(\vp, F, \Psi): = (1-\k )F - \k C\vp + (\k +1)\Psi $, and $0 < \delta_1 < 1$ small. 
For the first term on the RHS of equation (\ref{w2p-010}), we use the integration by parts 
\begin{eqnarray}
\label{w2p-011}
&& - \int_X \theta_\ep^2 v^{p-1} e^{B} \frac{1}{1-\k} \Delta \big( (1-\k)F - \k C\vp + (1+\k)\Psi  \big)\omega_D^n
\nonumber\\
&= & - \int_X \theta^2_\ep \frac{ v^{p-1}  e^{B }   }{\k -1} |\nabla\left( (1-\k)F -\k C\vp +(1+\k)\Psi   \right)|^2 \omega_D^n
\nonumber\\
&-& \int_X\frac{p-1}{\k -1} \theta_\ep^2 v^{p-2} e^{B} \nabla v\cdot \nabla \{ (1-\k)F -\k C\vp +(1+\k)\Psi \} \omega_D^n
\nonumber\\
&-& 2\int_X \frac{  v^{p-1} e^{B} }{\k -1} \nabla\theta_\ep \cdot \{\theta_\ep \nabla \big((1-\k)F -\k C\vp +(1+\k)\Psi \big) \}\omega_D^n
\nonumber\\
&\leq& \frac{(p-1)^2}{2(1-\delta_1)(\k-1)} \int_X \theta_\ep^2 v^{p-3} e^{B} |\nabla v|^2\omega_D^n + \delta_1^{-1} \rm{II}_\ep
\nonumber\\
&\leq&  \frac{(p-1)^2}{2(1-\delta_1)(\k-1)} \int_X \theta_\ep^2 v^{p-2}  |\nabla_{\vp} v|_{\vp}^2\omega_{\vp}^n + \delta_1^{-1} \rm{II}_\ep,
\nonumber\\
\end{eqnarray}
where the last term is 
$${\rm{II}}_\ep: = \frac{1}{\k -1}\int_X v^{p-1} e^B |\nabla \theta_\ep|^2 \omega_D^n. $$
Here we used the inequality 
\begin{equation}
\label{w2p-0110}
- 2 \frac{  v^{p-1} e^{B} }{\k -1} \nabla\theta_\ep \cdot ( \theta_\ep \nabla B ) \leq \delta_1 \frac{\theta_\ep^2  e^B v^{p-1}}{\k-1} |\nabla B|^2
 + \frac{v^{p-1}e^B}{\delta_1(\k-1)} |\nabla \theta_\ep|^2,
\end{equation}
and 
\begin{equation}
\label{w2p-0115}
-\frac{p-1}{\k -1} v^{p-2} \nabla v\cdot \nabla B \leq \frac{(1-\delta_1) v^{p-1}}{2(\k -1)}|\nabla B|^2 + \frac{(p-1)^2 v^{p-3}|\nabla v|^2}{2(1-\delta_1)(\k -1)}.
\end{equation}
Picking up $\delta_1: = \frac{1}{2}$, we have 
\begin{eqnarray}
\label{w2p-012}
- \int_X \theta_\ep^2 v^{p-1} e^{A} \Delta F \omega^n_\vp &\leq& \frac{(p-1)^2}{(\k-1)} \int_X \theta_\ep^2 v^{p-2}  |\nabla_{\vp} v|_{\vp}^2\omega_{\vp}^n + 2{\rm{II}_\ep}
\nonumber\\
& + &   \int_X \theta_\ep^2 v^{p-1}e^{A}  \frac{  \k C \Delta\vp -(1+\k)\Delta\Psi}{\k -1}\omega_{\vp}^n.
\end{eqnarray}
Plugging equation (\ref{w2p-012}) back to (\ref{w2p-009}), we have 
\begin{eqnarray}
\label{w2p-014}
&& \int_X \left( p-1-\delta - \frac{(p-1)^2}{\k -1} \right) \theta_\ep^2 v^{p-2} |\nabla_{\vp}  v|_{\vp}^2\omega_{\vp}^n 
\nonumber\\
&\leq&  -\int_X \frac{\k C}{4} \theta_\ep^2 v^{p} (\tr_\vp g) \omega^n_\vp
+\int_X \theta^2_\ep v^{p-1} e^A \left( \k C_{18}(n+\Delta\vp) + \frac{\k C}{\k -1}\Delta\vp   \right)\omega_\vp^n
\nonumber\\
&+& \int_X \theta_\ep^2 v^{p-1} e^A \frac{\k +1}{\k -1} (- \Delta\Psi) \omega^n_{\vp} + \delta^{-1} {\rm I}_\ep + 2\rm{II}_\ep.
\nonumber\\
\end{eqnarray}
Picking up $\k \geq 2$, we see
$$\k C_{18}(n+\Delta\vp) + \frac{\k C}{\k -1}\Delta\vp \leq \k (C_{18}+C) (n + \Delta\vp),$$
and by equation (\ref{w2p-0015})
$$ \theta_{\ep}^2 \frac{\k +1}{\k -1} (- \Delta\Psi) \leq  3n C_{15} \theta_{\ep}^2\leq C_{19} \theta_\ep^2 (n+\Delta\vp),$$
where the constant $C_{19}$ depends on the uniform lower bound of $F$.
Eventually, we come up with
\begin{eqnarray}
\label{w2p-015}
&& \int_X \left( p-1-\delta - \frac{(p-1)^2}{\k -1} \right) \theta_\ep^2 |\nabla_{\vp}  v|_{\vp}^2\omega_{\vp}^n + \int_X \frac{\k C}{4} \theta_\ep^2 v^{p} (\tr_\vp g) \omega^n_\vp
\nonumber\\
&\leq& \int_X \k \theta_\ep^2 (C_{18} + C+ C_{19}) v^p \omega_\vp^n + \delta^{-1} {\rm I}_{\ep} + 2 \rm{II}_{\ep}.
\nonumber\\
\end{eqnarray}

Take the number $\k: = \max \{2, 10p/9 \}$, and then by our choice of $\delta$, we have 
$$ p-1 -\delta - \frac{(p-1)^2}{\k-1} \geq 0. $$
Drop the positive term in equation (\ref{w2p-015}) involving $|\nabla_\vp v|^2_{\vp}$, and then one obtains
\begin{equation}
\label{w2p-016}
 \int_X \frac{\k C}{4} \theta_\ep^2 v^{p} (\tr_\vp g) \omega^n_\vp \leq  C_{20} \int_X \k \theta_\ep^2  v^p \omega_\vp^n + \delta^{-1} {\rm I}_{\ep} + 2 \rm{II}_{\ep}.
\end{equation}
For fixing $\delta$, we let $\ep\rightarrow 0$, and then the two error terms converges to zero by equation (\ref{w2p-007}),
and we have 
\begin{equation}
\label{w2p-017}
 \int_Y   v^{p} (\tr_\vp g) \omega^n_\vp \leq  C_{20} \int_Y   v^p \omega_\vp^n
\end{equation}
Moreover, since $(n+\Delta\vp) \leq e^F (\tr_\vp g)^{n-1}$ on $Y$, we have from the definition of $v$ that 
\begin{eqnarray}
\label{w2p-018}
&& \int_Y e^{\left( \frac{n-2}{n-1} - \k p \right)F + p(\k+1)\Psi - p\k C\vp } (n+\Delta\vp)^{p + \frac{1}{n-1}} \omega_D^n
\nonumber\\
&\leq&  C_{20} \int_Y e^{\left( 1 - \k p \right)F + p(\k+1)\Psi - p\k C\vp } (n+\Delta\vp)^{p } \omega_D^n
\end{eqnarray}
Let $C_{21}$ be a constant such that $||\vp||_0, || F ||_0, ||\Psi||_0 < C_{21}$, 
and then we have 
\begin{equation}
\label{w2p-019}
 \int_Y (n+\Delta\vp)^{p + \frac{1}{n-1}} \omega_D^n \leq C_{22} e^{(\k p +p) C_{21}} \int_Y (n+\Delta\vp)^{p}\omega_D^n,
\end{equation}
where the uniform constant $C_{22}$ does not depend on $p$ or $\k $.

By induction, we can conclude our theorem if there exists a $p_0 >1$ such that the integral 
$$\int_Y (n+\Delta\vp)^{p_0}\omega_D^n$$
is uniformly bounded,
and we claim that this is true for $p_0 = 1+\frac{1}{n-1}$.

In fact, take a sequence of real numbers $1< p_i < 1.5 $ such that $p_i \searrow 1$. 
Then for each $p_i$, we can take $\k =2$, and then there exists a constant $C_{23}$ to satisfy 
$$  C_{22} e^{(2 p_i +2) C_{21}} \leq C_{23},$$ 
for all $i$. 

By the H\"older inequality, we have  
\begin{eqnarray}
\label{w2p-020}
\int_Y (n+\Delta\vp)^{1+ \frac{1}{n-1}} \omega^n_D &\leq& \int_Y (n+\Delta\vp)^{ p_i + \frac{1}{n-1}} \omega^n_D
\nonumber\\
&\leq& C_{23} \int_Y (n+\Delta\vp)^{ p_i} \omega^n_D,
\end{eqnarray}
but the last term is converging to the following by the dominate convergence theorem as $i\rightarrow +\infty$
\begin{equation}
\label{w2p-021}
\int_X (n + \Delta\vp) \omega^n_D =  \int_X ( \omega + dd^c( \psi_D + \varphi) ) \wedge ( \omega+ dd^c\psi_D)^{n-1} = n.
\end{equation}
Here we used Stoke's theorem for $L^{\infty}$ quasi-plurisubharmonic functions.
Therefore, for all $i$ large enough, we have 
$$\int_Y (n+\Delta\vp)^{ p_i} \omega^n_D \leq n+1. $$
The claim is proved, and our result follows.

\end{proof}

\section{The Laplacian estimate}
\label{sec-006}
Recall our notations in the previous section. 
Let $(\psi, G)$ be a $\cC^{2,\a,\b}$-conic cscK pair on $(X,D)$, with respect to the background metric $\omega_\b$,
and a new pair $(\vp, F)$ be its reformulation with respect to Donaldson's metric $\omega_D$, i.e. they satisfy equations (\ref{w2p-004}) and (\ref{w2p-005}).
Their relations are 
$\vp= \psi  - \psi_D + \psi_\b$,
$ F = G +h$.
However, it is important that our target metric remains the same as
$$\omega_{\vp}: = \omega_{D} + dd^c\vp = \omega_\b + dd^c\psi.$$ 
Moreover, we adapt to the following conventions: 
\begin{itemize}

\item denote $g, \nabla, \Delta$ as the Riemnannian metric, gradient, and Laplacian 
with respect to $\omega_D$;

\item denote $g_\b, \nabla_\b, \Delta_\b$ with respect to the background metric $\omega_\b$;

\item denote $g_\vp, \nabla_\vp, \Delta_\vp$ with respect to $\omega_\vp$.
\end{itemize}

The two background metrics $\omega_D$ and $\omega_\b$ are actually quasi-isometric to each other on $X$,
and then there exists a uniform constant $C_{24}$ to satisfy 
$$C_{24}^{-1} \tr_{\omega_D} \omega_\vp \leq \tr_{\omega_\b}\omega_\vp \leq C_{24} \tr_{\omega_D}\omega_\vp, $$
equivalently 
\begin{equation}
\label{c2-001}
C_{24}^{-1} (n + \Delta \vp) \leq  (n + \Delta_\b \psi) \leq C_{24} (n+ \Delta \vp).
\end{equation}

Before proceeding to the $C^2$-estimate, we need a different version of the Sobolev inequality for conic metrics. 
Recall the set $Y: = X\sm \text{Supp}(D)$, and then we proved the following.
\begin{lemma}
\label{c2-lem-sob}
Let $u$ be any smooth function on $Y$ satisfying $\sup_{Y}|u| < +\infty$. For any $1< p\leq 2$ and $q = \frac{2np}{2n-p}$, we have 
\begin{equation}
\label{c2-sob}
\left( \int_Y |u|^q \omega_D^n \right)^{\frac{1}{q}} \leq C_{sob,D} \left\{  \left( \int_Y |\nabla_g u|^p_g \omega_D^n \right)^{\frac{1}{p}}   
+ \left( \int_Y |u|^p \omega_D^n \right)^{\frac{1}{p}}   \right\},
\end{equation}
for some uniform constant $C_{sob, D}$.
\end{lemma}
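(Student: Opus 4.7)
The plan is to obtain the inequality by smooth approximation, using the family of smooth K\"ahler metrics $\omega_{D,\ep}$ from \cite{GP} which converge to $\omega_D$ on $Y$. For each fixed $\ep>0$, $\omega_{D,\ep}$ is a genuine smooth K\"ahler metric on the compact manifold $X$, so the classical Sobolev embedding
$$\|v\|_{L^q(X,\omega_{D,\ep}^n)} \leq C_\ep \bigl( \|\nabla^{g_\ep} v\|_{L^p(X,\omega_{D,\ep}^n)} + \|v\|_{L^p(X,\omega_{D,\ep}^n)} \bigr)$$
holds for every smooth $v$ on $X$, with the exponents $p, q$ of the statement and some constant $C_\ep$ depending a priori on $\ep$. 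The first and main step is to show that $C_\ep$ can be taken uniform in $\ep$.

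Uniform bounds on $C_\ep$ would follow from the standard geometric recipe: a uniform upper bound on $\mathrm{diam}(X,\omega_{D,\ep})$, uniform two-sided volume bounds, and a uniform lower Ricci (or isoperimetric) bound. Volume bounds are immediate from the construction, the diameter bound is available from the conic geometry estimates of \cite{CGP13} and \cite{GP}, and a Ricci lower bound of the form $\mathrm{Ric}(\omega_{D,\ep}) \geq -(C\omega_{D,\ep} + dd^c \Psi_{\gamma,\ep})$ with $\Psi_{\gamma,\ep}$ of uniformly bounded oscillation is precisely what is recalled around (\ref{w2p-002}). A potentially cleaner alternative is to argue locally: cover $X$ by finitely many coordinate balls, apply the classical Euclidean Sobolev inequality on balls disjoint from $\mathrm{Supp}(D)$ (where $\omega_D$ is smooth and uniformly equivalent to Euclidean), and on balls meeting the divisor use the branched change of variables $z_k = w_k^{1/\beta_k}$, under which a direct computation shows that the model cone metric pulls back to a form uniformly equivalent to the Euclidean metric on the $w$-patch. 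The Euclidean Sobolev inequality then transfers, with a constant depending only on the angles $\beta_k$ and the covering.

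Once the uniform Sobolev constants are in hand, one applies the inequality against $\omega_{D,\ep}$ to our bounded smooth $u$ and lets $\ep \to 0$. Pointwise on $Y$ the integrands $|u|^q \omega_{D,\ep}^n$, $|u|^p \omega_{D,\ep}^n$ and $|\nabla^{g_\ep} u|_{g_\ep}^p \omega_{D,\ep}^n$ converge to the corresponding $\omega_D$-versions; the hypothesis $\sup_Y |u| < +\infty$ combined with an integrable dominating density (given by the klt condition $\beta_k \in (0,1)$) lets dominated convergence pass through each integral. Glueing via a finite partition of unity whose cut-offs have uniformly bounded $g$-gradients — which holds because $\omega_D$ dominates the smooth reference $\omega$ in the directions transverse to $D$, so smooth cut-offs are no worse in the conic norm than in the Euclidean one — then yields the desired inequality on $Y$ with the required $C_{sob,D}$.

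The main obstacle is securing the uniform Sobolev constant: both candidate strategies (approximation with uniform geometric bounds, or local branched covers) need careful handling near $\mathrm{Supp}(D)$, where the absence of bounded Riemannian geometry could in principle blow the Sobolev constant up. The klt hypothesis $\beta_k \in (0,1)$ is exactly the threshold that prevents this degeneration, and making this precise — whether through the uniform curvature estimates for $\omega_{D,\ep}$ already used in Section~4, or through the quasi-isometry to the Euclidean metric on a branched cover — is the heart of the argument. I expect the approximation route to be the most economical here, since the Ricci lower bound and the diameter/volume controls are already essentially available in the paper's setup.
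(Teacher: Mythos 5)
Your secondary route — the branched change of variables $z_k = w_k^{1/\beta_k}$, under which the cone metric becomes quasi-isometric to Euclidean — is exactly the paper's proof (Donaldson's polar coordinate $\Xi$), but you present it as an aside rather than carrying it out, and you omit the step that is the actual content of the lemma: $u$ is only smooth on $Y$, not across $\mathrm{Supp}(D)$, so one cannot apply the classical Sobolev inequality to $u$ (or to its pull-back) directly. The paper inserts the cut-off $\theta_\ep$ supported away from the divisor, applies Sobolev to $\theta_\ep u$, splits $\nabla(\theta_\ep u)$ by Minkowski, and then uses $\sup_Y|u|<\infty$ together with $\int d\theta_\ep\wedge d^c\theta_\ep\wedge\omega^{n-1}\to 0$ to kill the error term. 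Your appeal to boundedness of $u$ plus dominated convergence is aimed at the $\ep\to 0$ smoothing parameter of your \emph{first} route and does not address this point; in particular it does not control the gradient term, since one does not know a priori that $\nabla u\in L^p(\omega_D^n)$.

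Your primary route contains a genuine gap. The curvature inequality you quote from near (\ref{w2p-002}) reads $\sqrt{-1}\Theta_{\omega_{D,\ep}}(T_X)\geq -(C_{16}\omega_{D,\ep}+dd^c\Psi_{\gamma,\ep})\otimes\mathrm{Id}$, and the term $dd^c\Psi_{\gamma,\ep}$ is \emph{not} dominated by $\omega_{D,\ep}$: since $\gamma<\min_k\beta_k$, the singularity $(|z_k|^2+\ep^2)^{\gamma-1}$ in $dd^c\Psi_{\gamma,\ep}$ blows up strictly faster than the metric coefficient $(|z_k|^2+\ep^2)^{\beta_k-1}$. So this is not a Ricci lower bound of the form $\mathrm{Ric}\geq -K\omega_{D,\ep}$ with $K$ uniform, and in fact one can check that the off-diagonal Ricci entries of $\omega_{D,\ep}$ are \emph{not} uniformly controlled by the metric as $\ep\to 0$. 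Consequently the standard Gallot/Croke recipe (Ricci, diameter, volume $\Rightarrow$ Sobolev) does not apply. Invoking a Bakry--\'Emery version with the bounded-oscillation potential $\Psi_{\gamma,\ep}$ might repair this, but that is itself a nontrivial claim that you would need to substantiate, not a ``standard geometric recipe.'' So the approximation route, as written, rests on a premise that fails, while the branched-cover route — the one you deem less economical — is the correct one and needs the cut-off argument to be complete.
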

\begin{proof}
It is enough to argue in an open neighbourhood $U$ of a point $p\in \text{Supp}(D)$,
and the general case follows in the standard way by using a partition of unity. 
Suppose $ (z_1,\cdots,z_n)$ is a holomorphic coordinate chart on $U$, such that $p$ is its origin 
and the defining function of the support of the divisor is $\{ z_1\cdots z_d =0\}$.
Recall that Donaldson's polar coordinate is a bijection $\Xi: B_{1}(0) \rightarrow U$ as 
$$(\z_1,\cdots, \z_d, z_{d+1}, \cdots, z_n) \rightarrow (|\z_1|^{\frac{1}{\b_1}-1}\z_1,\cdots,  |\z_d|^{\frac{1}{\b_d}-1}\z_d, z_{d+1},\cdots,  z_n).$$

This map $\Xi$ is a bijection, diffemorphism outside of the divisor, but it is no longer holomorphic. 
Moreover, the pull back of the conic K\"ahler metric $\Xi^*\omega_D$ is quasi-isometric to the Euclidean metric on Donaldson's polar coordinate. 
Therefore, it is enough to prove the following inequality 
\begin{equation}
\label{c2-sob-1}
\left( \int_{B_1\sm D} u^q  \right)^{\frac{1}{q}} \leq C \left( \int_{B_1\sm D} |\nabla u|^p \right)^{\frac{1}{p}} + C \left( \int_{B_1\sm D} |u|^p \right),
\end{equation}
for some uniform constant $C$. 

Let $\theta_\ep$ be our previous cut-off function introduced on Donaldson's polar coordinate,
and then apply the Sobolev inequality, with exponent $p$, to the smooth function $\theta_\ep u$ on $B_1$ to have 
\begin{equation}
\label{c2-sob-2}
\left( \int_{B_1 } | \theta_\ep u |^q  \right)^{\frac{1}{q}} \leq C \left( \int_{B_1} |\nabla (\theta_\ep u) |^p \right)^{\frac{1}{p}} + C \left( \int_{B_1} |\theta_\ep u|^p \right).
\end{equation}
The only issue is on the gradient term while taking convergence of the above equation.
Thanks to the Minkowski inequality, this gradient term is controlled by 
\begin{equation}
\label{c2-sob-3}
\begin{split}
& \left( \int_{B_1} | \theta_\ep \nabla u |^p \right)^{\frac{1}{p}} +  \left( \int_{B_1} | u \nabla \theta_\ep  |^p \right)^{\frac{1}{p}}  \\
& \leq  \left( \int_{B_1} | \theta_\ep \nabla u |^p \right)^{\frac{1}{p}}  + || u ||_{L^{\infty}} \int_{B_1} C d\theta_\ep\wedge d^c\theta_\ep \wedge \omega_{Euc}^n.
 \end{split}
\end{equation}
The last term on the RHS of equation (\ref{c2-sob-3}) converges to zero as $\ep\rightarrow 0$,
and equation (\ref{c2-sob-1}) is proved.

\end{proof}

Then we can prove the following $C^2$-estimate for conic cscK metrics.

\begin{theorem}
\label{main-thm-3}
There exists $p_n > 0$ only depending on the dimension $n$, such that 
\begin{equation}
\label{c2-002}
 \max_X |\nabla_{\vp} G|^2_{\vp} + \max_X (n + \Delta_{\b}\psi) \leq C_{25},
\end{equation}
where the constant $C_{25}$ depends on the following 
$ ||\vp||_0 $, 
$ || G ||_0$,
$ || h ||_{0}$, 
$|| \psi_D||_0$, 
$||\psi_\b||_0$,
$ || n + \Delta\vp ||_{L^{p_n}(\omega_D^n)}$,
$ (X, \omega_D, \omega_\b )$ and $(D,\phi)$.
\end{theorem}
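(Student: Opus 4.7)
The plan is to adapt the Chen--He integral method, as streamlined by Chen--Cheng in \cite{CC3}, to the conic setting. As in Section \ref{sec-004}, I would work with Donaldson's metric $\omega_D$ as the background and deploy the conic weight function $\Psi = \Psi_\g$ to absorb the negative lower bound of the bisectional curvature of $\omega_D$ recorded in (\ref{w2p-002}). All pointwise computations take place on $Y = X \sm \text{Supp}(D)$, where the pair $(\vp,F)$ is genuinely smooth.

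First, I would compute $\Delta_\vp (|\nabla_\vp F|^2_\vp)$ via the usual Bochner formula and substitute the reformulated cscK equation (\ref{w2p-005}), $\Delta_\vp F = \tr_\vp(\Theta + dd^c h) - \ul R_\b$, to turn the resulting $2\Re\langle \nabla_\vp F, \nabla_\vp \Delta_\vp F\rangle_\vp$ term into something controlled by $|\nabla^2_\vp F|^2_\vp$ plus curvature of $\omega_D$. Combining this with the Chern--Lu inequality for $\log(n+\Delta\vp)$ from (\ref{w2p-006}) and the lower bound (\ref{w2p-002}), I expect to produce a single pointwise inequality on $Y$ of the shape
\[
\Delta_\vp \Bigl( e^{\delta A}\bigl( n + \Delta\vp + |\nabla_\vp F|^2_\vp + K \bigr) \Bigr) \; \geq \; \frac{\k C}{4} e^{\delta A}(n+\Delta\vp)\tr_\vp g - C' e^{\delta A}(n+\Delta\vp),
\]
with $A := -\k F - \k C \vp + (\k+1)\Psi$ for suitably large $\k$, $C$, and $K$. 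The weight $(\k+1)\Psi$ is exactly what is needed to cancel the $-\Psi_{i\bar i}$ appearing in the curvature bound on $\omega_D$, while the drift in $-\k C \vp$ restores positivity of $\tr_\vp g$.

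The second step is a Moser iteration. Setting $v := e^{\delta A}(n+\Delta\vp + |\nabla_\vp F|^2_\vp + K)$, multiplying by $\theta_\ep^2 v^{p-1}$, and integrating by parts, I would mimic the computation of (\ref{w2p-008})--(\ref{w2p-016}) to obtain an $L^{p + \frac{1}{n-1}}$--$L^p$ recursion for $v$, where the boundary terms $\mathrm{I}_\ep$ and $\mathrm{II}_\ep$ coming from $\d\theta_\ep$ vanish as $\ep \to 0$ by (\ref{w2p-007})--(\ref{w2p-0070}), since $|\nabla F|^2_{\omega_\text{cone}}$ is bounded near the divisor (equation (\ref{pe-0030})). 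Iterating via the conic Sobolev inequality of Lemma \ref{c2-lem-sob} starting from the $L^{p_n}$ bound on $n+\Delta\vp$ (and hence on $v$, by the $C^0$ bounds on $F$, $\vp$, $\Psi$) supplied by Theorem \ref{main-thm-2}, I would close the iteration to $L^\infty$. The exponent $p_n$ is determined solely by how the Sobolev exponent improves per step, hence only depends on $n$. The $\max$ bound on $n+\Delta_\b\psi$ then follows from the quasi-isometry (\ref{c2-001}), and the $\max$ bound on $|\nabla_\vp G|^2_\vp$ from $G = F - h$ together with the controlled growth of $h$.

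The main obstacle will be to simultaneously bound $|\nabla_\vp F|^2_\vp$ and $n+\Delta\vp$ in the same iteration: the Bochner formula produces a third-order term $|\nabla^2_\vp F|^2_\vp$ that must absorb, via Cauchy--Schwarz, cross terms generated by integrating $\Delta\vp \cdot \Delta F$ by parts. Tuning the constants $\k$, $C$, $\delta$ simultaneously for the $F$-gradient piece and the Laplacian piece, while keeping the Sobolev-driven iteration self-improving, is the delicate point. A secondary subtlety is that Lemma \ref{c2-lem-sob} is stated for bounded functions on $Y$, so at each iteration step one must first truncate $v$ at height $\ell$, run the recursion, and then send $\ell \to \infty$ using the $L^{p_n}$ tail control from Theorem \ref{main-thm-2}.
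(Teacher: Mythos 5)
Your overall plan (Bochner for the gradient, Chern--Lu for the Laplacian, add them with a conic weight, Moser iteration with the conic Sobolev inequality of Lemma \ref{c2-lem-sob} and the cut-off $\theta_\ep$) is the same strategy the paper follows in Section \ref{sec-006}. However, there is one genuine gap that the paper goes to some length to avoid, and your proposal stumbles exactly there.

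You propose to run the Bochner argument on $|\nabla_\vp F|^2_\vp$, where $F = G + h$ is the Monge--Amp\`ere density relative to the Donaldson background $\omega_D$, and then recover the bound on $|\nabla_\vp G|^2_\vp$ at the end via $G = F - h$. This does not work. After switching background metrics, $F$ is only $L^\infty$ but not in $\cC^{2,\a,\b}$, and the paper explicitly records that $|\d h|^2_{\omega_{cone}}$ is \emph{unbounded} near the divisor once some angle $\b_k > 2/3$ (see the discussion after equation (\ref{c2-003})). Consequently $|\nabla_\vp F|^2_\vp$ need not be in $L^\infty(X)$, so the hypothesis $\sup_Y|u| < +\infty$ of Lemma \ref{c2-lem-sob} fails for your $v$, and the $L^\infty$ conclusion you are trying to prove is simply false for that quantity; truncating at height $\ell$ and letting $\ell \to\infty$ cannot fix this. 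Moreover, your appeal to (\ref{pe-0030}) to justify boundedness of $|\nabla F|^2_{\omega_{cone}}$ near the divisor is misplaced: (\ref{pe-0030}) refers to the original density relative to $\omega_\b$ (which the paper calls $G$ in this section), not to $F = G + h$. The paper's fix is precisely the point of the ``switching back and forth'' remark: it applies the Bochner formula to $e^{G/2}|\nabla_\vp G|^2_\vp$ (a genuinely bounded quantity since $G\in\cC^{2,\a,\b}$), using that $\mathrm{Ric}(\omega_\b)=\Theta$ is smooth and bounded on $Y$, while the Chern--Lu inequality for $\log(n+\Delta\vp)$ is still computed against $\omega_D$ so that the conic weight $\Psi$ can absorb the bisectional curvature. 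The two inequalities (\ref{c2-007}) and (\ref{c2-008}) are added, with the $C_{28}^{-1}|\ddbar G|^2_{g_\vp}$ term from the Bochner formula cancelling the $\Delta G$ term from Chern--Lu, to get (\ref{c2-009}) for $u := e^{G/2}|\nabla_\vp G|^2_\vp + (n+\Delta\vp) + 1$, without any exponential $e^{\delta A}$ drift. The $L^1(\omega_D^n)$ seed for the iteration on the gradient piece then comes from integrating $\Delta_\vp(G^2)$ against $\omega_\vp^n$ (equations (\ref{c2-023})--(\ref{c2-24})), a step your proposal does not mention but which is needed to start the Moser chain.

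A secondary point: your ansatz $A = -\k F - \k C\vp + (\k+1)\Psi$ with weight $e^{\delta A}$ is the one the paper uses for the $W^{2,p}$ estimate (Theorem \ref{main-thm-2}), where the goal is an $L^{p+\frac{1}{n-1}}$-to-$L^p$ recursion. For the $C^2$/gradient estimate the paper deliberately drops this weight: once the $W^{2,p}$ bounds from Theorem \ref{main-thm-2} are available, the required inequalities (\ref{c2-018})--(\ref{c2-022}) are a straight Moser iteration with $K_\de$ controlled by finite $L^p(\omega_D^n)$-norms of $n+\Delta\vp$, and no further exponential drift is needed. Your recursion ``$L^{p+\frac{1}{n-1}}$-to-$L^p$'' is the wrong shape here; one wants the Sobolev-improving $L^\mu$-to-$L^{2+\de}$ jump of a genuine Moser iteration.

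So: correct skeleton, but replace $|\nabla_\vp F|^2_\vp$ by $|\nabla_\vp G|^2_\vp$, compute the Bochner term against $\mathrm{Ric}(\omega_\b)=\Theta$ rather than $\mathrm{Ric}(\omega_D)$, drop the $e^{\delta A}$ weight, and add the $L^1$ seed argument via $\int_Y\Delta_\vp(G^2)\omega_\vp^n=0$.
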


Since the two background metrics are quasi-isometric, and the functions $G, \vp, \psi$ are all in $\cC^{2,\a,\b}$,
it is enough to prove that the following estimate holds: 
\begin{equation}
\label{c2-003}
 \max_Y  |\nabla_{\vp} G|^2_{\vp} + \max_Y (n + \Delta\vp) \leq C_{25}.
\end{equation}

The reason to switch the two background metrics back and forth is as follows:
on the one hand, the \emph{Ricci} curvature $Ric(\omega_{\b})$ is smooth and uniformly bounded outside the divisor,
but its bisectional curvature is not completely clear for normal crossing divisors;
on the other hand, the growth of the bisectional curvature $R_{i\bar i j\bar j}(\omega_D)$ is clear,
but its Ricci curvature $Ric(\omega_D)$ is no longer bounded under the conic metric $\omega_D$.
In fact, the norm $|\d h |^2_{\omega_{cone}}$ is not bounded if $\b > 2/3$.

Since $G\in\cC^{2,\a,\b}$, the first term $|\nabla_\vp G|^2_\vp$ is an $L^{\infty}$ function on $X$.
Then we invoke Chen-Cheng's $C^2$-estimates \cite{CC2}, \cite{CC3}, and compute on $Y$ to obtain 
\begin{equation}
\label{c2-004}
\begin{split}
e^{- \frac{G}{2}} \Delta_\vp (e^{\frac{G}{2}} |\nabla_\vp G|^2_{\vp}) & \geq  2  \nabla_\vp G \cdot_\vp \nabla_\vp (\Delta_\vp G) 
 + g^{\bar q p}_\vp g^{\bar\b\a}_\vp \Theta_{p\bar\b} G_\a G_{\bar q}  \\
 & + \frac{1}{2} |\nabla_\vp G|^2_{\vp} ( -\ul{R}_{\b} + \tr_\vp \Theta) + g^{\bar q p}_\vp g^{\bar\b \a}_\vp G_{\a\bar q} G_{p\bar\b}.
\end{split}
\end{equation}
Here we used the fact $Ric(\omega_\b) = \Theta$ on $Y$, and then we have 
\begin{equation}
\label{c2-005}
\tr_\vp \Theta - \ul{R}_{\b} \geq - C_{26} ( 1 + \tr_\vp \omega_{\b}) \geq - C_{26} (1+ e^{-G}(n + \Delta_\b\psi)^{n-1}),
\end{equation}
and also 
\begin{equation}
\label{c2-006}
g^{\bar q p}_\vp g^{\bar\b\a}_\vp \Theta_{p\bar\b} G_\a G_{\bar q} \geq - C_{27} |\nabla_\vp G|^2_{\vp}( n+ \Delta_\b \psi )^{n-1}.
\end{equation}
Then we came up with the following by equation (\ref{c2-001})
\begin{equation}
\label{c2-007}
\begin{split}
\Delta_\vp (e^{\frac{G}{2}} |\nabla_\vp G|^2_{\vp}) & \geq  2 e^{ \frac{G}{2}}  \nabla_\vp G \cdot_\vp \nabla_\vp (\Delta_\vp G)
+ \frac{1}{C_{28}} g^{\bar q p}_\vp g^{\bar\b \a}_\vp G_{\a\bar q} G_{p\bar\b} \\
&  - C_{28} (1+ (n + \Delta\vp)^{n-1}) |\nabla_\vp G|^2_{\vp}.
\end{split}
\end{equation}

A key observation is that the positive term in equation (\ref{c2-007})
is actually the $L^2$-norm 
under the target metric $\omega_\vp$ of the complex Hessian of $G$,
and then it can be re-written in $\omega_D$-normal coordinate as  
$$ g^{\bar q p}_\vp g^{\bar\b \a}_\vp G_{\a\bar q} G_{p\bar\b} = |\ddbar G|^2_{g_{\vp}} = \frac{|G_{i\bar j}|^2}{(1+ \vp_{i\bar i})(1+ \vp_{j\bar j})}. $$

\subsection{The $C^2$ estimate}
From now on, we stick to the background metric $\omega_D$.
Let $\Psi_{\g}$ be the auxiliary function used in Guenancia-P\u aun's trick,
and we recall the following inequality 
$$ \Delta_{\vp} \log (n+ \Delta\vp)  \geq  - C_{16} \text{tr}_{\vp} g + \frac{\Delta F}{ n + \Delta\vp} - \Delta_\vp \Psi.$$
Then we have by equations (\ref{w2p-003}) and (\ref{w2p-0015})
\begin{equation}
\label{c2-008}
\begin{split}
e^{-2\Psi} \Delta_\vp (e^{2\Psi} (n+ \Delta\vp)) & \geq -C_{16}(n+ \Delta\vp)\tr_\vp g +  \Delta G \\
& +  \Delta h + (n+ \Delta\vp) \Delta_\vp \Psi \\
& \geq -(C_{16} + C_{17})(n+ \Delta\vp)\tr_\vp g  + \Delta G \\
& \geq - C_{29} (n+ \Delta\vp)^n - \frac{1}{C_{28}} \frac{|G_{i \bar i}|^2}{(1+ \vp_{i\bar i})^2}
\end{split}
\end{equation}
Here we used the inequality 
$$ \tr_{\omega_D} (C_{17}\omega_D + dd^c\Psi) \leq \tr_{\omega_D}\omega_{\vp} \cdot  ( C_{17}\tr_\vp g + \Delta_\vp\Psi). $$
Put 
$$ u: = e^{\frac{G}{2}} |\nabla_\vp G|_\vp^2 + (n + \Delta\vp) + 1.$$
Combining with equations (\ref{c2-007}) and (\ref{c2-008}), we obtain 
\begin{equation}
\label{c2-009}
\Delta_\vp u \geq 2 e^{ \frac{G}{2}}  \nabla_\vp G \cdot_\vp \nabla_\vp (\Delta_\vp G) - C_{30} (n+\Delta\vp)^{n-1}u.
\end{equation}

\begin{proof}[Proof of Theorem (\ref{main-thm-3})]
We will do integration by parts for the first term on the RHS of equation (\ref{c2-009}) as follows.
Let $p>0$, and we use the previous cut off function to have 
\begin{equation}
\label{c2-010}
\begin{split}
 2p \int_X \theta^2_\ep u^{2p-1} |\nabla_\vp u|^2_\vp \omega_\vp^n &= \int_X \theta_\ep^2 u^{2p} (-\Delta_\vp u) \omega_\vp^n  \\
 &- 2\int_X ( u \nabla_\vp\theta_\ep ) \cdot_\vp ( \theta_\ep\nabla_\vp u ) u^{2p-1} \omega_\vp^n,
 \end{split}
\end{equation}
and then we have by the Cauchy-Schwarz inequality 
\begin{equation}
\label{c2-011}
\begin{split}
p \int_X \theta^2_\ep u^{2p-1} |\nabla_\vp u|^2_\vp \omega_\vp^n  &\leq  
 C_{30}\int_X \theta_\ep^2 (n+ \Delta\vp)^{n-1} u^{2p+1} \omega_\vp^n \\
& -2 \int_X \theta_\ep^2 e^{\frac{G}{2}} \nabla_\vp G \cdot_\vp \nabla_\vp (\Delta_\vp G) u^{2p}\omega_\vp^n + p^{-1} {\rm{IV_\ep}}
\end{split}
\end{equation}
where the error term is 
$$ {\rm{IV}}_\ep: = \int_X   u^{2p+1} d\theta_\ep\wedge d^c\theta_\ep\wedge\omega_{\vp}^{n-1}. $$
Then perform the integration by parts as 
\begin{equation}
\label{c2-012}
\begin{split}
 &  -2 \int_X \theta_\ep^2 e^{\frac{G}{2}} \nabla_\vp G \cdot_\vp \nabla_\vp (\Delta_\vp G) u^{2p}\omega_\vp^n = 
\int_X 4p\theta_\ep^2 u^{2p-1} e^{\frac{G}{2}} \Delta_\vp G  (\nabla_\vp G\cdot_\vp \nabla_\vp u ) \omega_\vp^n \\
& + \int_X 2\theta_\ep^2 u^{2p} e^{\frac{G}{2}} (\Delta_\vp G)^2 \omega_\vp^n 
+ \int_X \theta_\ep^2 u^{2p} e^{\frac{G}{2}} |\nabla_\vp G|^2_{\vp} \Delta_{\vp} G \omega_\vp^n + {\rm{V_\ep}},
\end{split}
\end{equation}
where the error term is 
$${\rm{V_\ep}}: = 4\int_X \theta_\ep e^{\frac{G}{2}} u^{2p} (\Delta_\vp G)   d \theta_\ep  \wedge d^c  G \wedge \omega_\vp^{n-1}.$$
This error $\rm{V}_\ep \rightarrow 0$ as $\ep\rightarrow 0$ by equation (\ref{w2p-0070}). 
Then we further use the Cauchy-Schwarz inequality to obtain 
\begin{equation}
\label{c2-014}
\begin{split}
&  4p \int_X \theta_\ep^2 u^{2p-1} e^{\frac{G}{2}} \Delta_\vp G  (\nabla_\vp G\cdot_\vp \nabla_\vp u ) \omega_\vp^n 
\leq \frac{p}{2} \int_X \theta_\ep^2 u^{2p-1} |\nabla_\vp u|^2_\vp \omega_\vp^n \\
&  + 8 p\int_X \theta_\ep^2  u^{2p} e^{\frac{G}{2}} (\Delta_\vp G)^2 \omega_\vp^n.
\end{split}
\end{equation}
Eventually we have 
\begin{equation}
\label{c2-015}
\begin{split}
& -2 \int_X \theta_\ep^2 e^{\frac{G}{2}} \nabla_\vp G \cdot_\vp \nabla_\vp (\Delta_\vp G) u^{2p}\omega_\vp^n 
\leq   \frac{p}{2} \int_X \theta_\ep^2 u^{2p-1} |\nabla_\vp u|^2_\vp \omega_\vp^n 
\\
& + (8p +2)  \int_X \theta_\ep^2  u^{2p} e^{\frac{G}{2}} (\Delta_\vp G)^2 \omega_\vp^n
+ \int_X \theta_\ep^2 u^{2p+1} \Delta_\vp G \omega_\vp^n + {\rm{V_\ep}}.
\end{split}
\end{equation}
Combined with equation (\ref{c2-011}), one obtains
\begin{equation}
\label{c2-016}
\begin{split}
&  \frac{p}{2} \int_X \theta^2_\ep u^{2p-1} |\nabla_\vp u|^2_\vp \omega_\vp^n  \leq  
 C_{30}\int_X \theta_\ep^2 (n+ \Delta\vp)^{n-1} u^{2p+1} \omega_\vp^n 
 \\
 & + (8p + 2) \int_X \theta_\ep^2 u^{2p} e^{\frac{G}{2}} (\Delta_{\vp} G)^2 \omega_{\vp}^n 
+  \int_X \theta_\ep^2 u^{2p+1} \Delta_\vp G \omega_\vp^n + {\rm{V_\ep}} + p^{-1} {\rm{IV_\ep}} 
\end{split}
\end{equation}
For fixed $p$ and any $\ep$ small enough, the RHS of equation (\ref{c2-016}) is bounded from the above by 
\begin{equation}
\label{c2-017}
\begin{split}
&   C_{30}\int_Y  (n+ \Delta\vp)^{n-1} u^{2p+1} \omega_\vp^n 
+ (8p + 2) \int_Y u^{2p} e^{\frac{G}{2}} (\Delta_{\vp} G)^2 \omega_{\vp}^n
\\
& +  \int_Y u^{2p+1} | \Delta_\vp G| \omega_\vp^n  +1
\end{split}
\end{equation}
Therefore, the LHS is uniformly bounded from the above, and it is monotony increasing to 
$$\frac{p}{2} \int_Y u^{2p-1} |\nabla_\vp u|^2_\vp \omega_\vp^n, $$ 
as $\ep\rightarrow 0$. Hence we can take $\ep\rightarrow 0$ simultaneously on the both sides of equation (\ref{c2-016}) to obtain 
\begin{equation}
\label{c2-018}
 p \int_Y u^{2p-1} |\nabla_\vp u|^2_\vp \omega_D^n \leq C_{31}(p+1) \int_Y  (n + \Delta\vp)^{2n-2} u^{2p+1} \omega^n_{D},
\end{equation}
and then we have
\begin{equation}
\label{c2-019}
\int_Y |\nabla_\vp (u^{p + \frac{1}{2}}) |^2_\vp \omega^n_D \leq \frac{(p+\frac{1}{2})^2(p+1)}{p}  \int_Y C_{31}(n + \Delta\vp)^{2n-2} u^{2p+1}\omega_D^n.
\end{equation}
Take $0<\delta<2$ and $v: = u^{p+\frac{1}{2}}$. For $p\geq \frac{1}{2}$, we apply H\"older's inequality as in Chen-Cheng \cite{CC1} to obtain
\begin{equation}
\label{c2-020}
\left( \int_{Y} | \nabla v|^{2-\delta} \omega_D^n \right)^{\frac{2}{2-\delta}} \leq  p^2 K_\delta C_{32}  \left( \int_Y  v^{2+\delta} \omega_D^n \right)^{\frac{2}{2+\delta}},
\end{equation}
where the constant is 
$$K_\delta: = n^{\frac{\delta}{2-\delta}} \left(   \int_Y (n+\Delta\vp)^{\frac{2-\delta}{\delta}} \omega_D^n    \right)^{\frac{\delta}{2-\delta}} \cdot 
\left( \int_Y (n + \Delta\vp)^{\frac{(2n-2)(2+\delta)}{\delta}} \omega_D^n \right)^{\frac{\delta}{2+\delta}}. $$

Here $v$ is an $L^{\infty}$ function on $X$,
and then we can invoke Lemma (\ref{c2-lem-sob}),
with the Sobolev exponent $p = 2-\delta$ to have 
$$ || v ||_{L^{\mu} (\omega_D^n)} \leq C_{sob, D} \left\{  ||\nabla v||_{L^{2-\delta}(\omega_D^n)} + ||v||_{L^{2-\delta}(\omega_D^n)}     \right\}, $$
with $\mu: = \frac{2n(2-\delta)}{2n-2+\delta}$.
Therefore, we eventually obtain the following by the H\"older inequality 
\begin{equation}
\label{c2-021}
\left( \int_Y u^{p+\frac{1}{2}} \omega_D^n \right)^{\frac{2}{\mu}} \leq  p C_{32,\delta} \left( \int_Y u^{(p+\frac{1}{2})(2+\delta)} \omega_D^n \right)^{\frac{2}{2+\delta}}.
\end{equation}
Pick $\delta$ small enough to satisfy 
$$ \frac{2n(2-\delta)}{2n - 2+\delta} > 2+\delta, $$
and then by the standard iteration technique, we have 
\begin{equation}
\label{c2-022}
|| u ||_{L^{\infty}} \leq C_{33,\delta} || u ||^{\frac{1}{2+\delta}}_{L^1(\omega_D^n)} || u ||^{\frac{1+\delta}{2+\delta}}_{L^{\infty}},
\end{equation}
where the constant $C_{33,\delta}$ is uniformly bounded if $C_{32,\delta}$ is.
Therefore, the $L^{\infty}$-norm of $u$ is controlled by the $L^{1}(\omega_D^n)$-norm of $u$.
It is easy to see that $(n+\Delta\vp)\in L^1(\omega_D^n)$,
and we claim that $e^{\frac{G}{2}} |\nabla_\vp G|^2_{\vp} \in L^1_{\omega_D^n}$.

In fact, the following integral is zero by introducing the cut off function $\theta_\ep$ and use equation (\ref{w2p-0070}) to let $\ep\rightarrow 0$
\begin{equation}
\label{c2-023}
\frac{1}{2}\int_Y \Delta_\vp (G^2) \omega_\vp^n = \int_Y  e^{G}|\nabla_\vp G|^2_{\vp}\omega_\b^n +  \int_Y G e^G (-\ul{R}_{\b} + \tr_{\vp}\Theta) \omega_{\b}^n,
\end{equation}
and then we have 
\begin{equation}
\label{c2-24}
\int_Y e^{\frac{G}{2}} |\nabla_\vp G|^2_\vp \omega_D^n \leq C_{34}\int_Y (1+ \tr_\vp g_\b)\omega_{\b}^n \leq C_{34}(n+1),
\end{equation}
since the two background metrics $\omega_D$ and $\omega_\b$ are quasi-isometric.
\end{proof}

\begin{rem}
\label{c2-rem-001}
According to our proof, all the a priori estimates, including the $C^0$-estimate, non-degeneracy estimate, $W^{2,p}$ and $C^2$ estimates, 
still hold if we only assume that the cscK pair $(\vp, F)\in\cC^{1,\bar 1}_\b$ in the beginning.  
\end{rem}

\section{The twisted case}
\label{sec-007}
Let $(X,D)$ be a log smooth klt pair, and $D: = \sum_{k=1}^d (1-\b_k) D_i$ as before. 
We consider a slightly different version of the conic cscK metric in this section. 

Fix a closed $(1,1)$ form $\tau_0$ on $Y$, such that $| \tau_0 |_{\omega_D}$ is an $L^{\infty}$ function on $X$. 
Let $(\vp, F, f)$ be a triple of function in the space $\cC^{2,\a,\b}(X,D)\bigcap C^{\infty}(Y)$, and we consider the following equation 
\begin{equation}
\label{tw-001}
(\omega_\b + dd^c\vp)^n = e^{F}\omega_\b^n;
\end{equation}
\begin{equation}
\label{tw-002}
\Delta_\vp F = \tr_\vp(\Theta - \tau) - R,
\end{equation}
and we furhter assume the following conditions: 
\begin{itemize}
\item $\tau: =\tau_0 + dd^c f \geq 0$;
\item $\sup_X f =0$ and $ e^{-f} $ is uniformly bounded in $L^{p_0}(\omega_\b^n)$-norm, for some $p_0 >1$;
\item $R$ is a uniformly bounded function.  
\end{itemize}

For later use, we also re-write equation (\ref{tw-002}) as 
\begin{equation}
\label{tw-003}
\Delta_\vp (F+f) = \tr_\vp(\Theta - \tau_0) - R.
\end{equation}

Now we are going to prove all the a priori estimates for this twisted equation.
However, the difficulty is that we do not have a uniform upper bound for the $(1,1)$ form $\tau$.
Therefore, it is not reasonable to require the Laplacian estimate anymore. 

The proofs of the following a priori estimates are very similar with our previous arguments.
Therefore, we will only sketch the proof and emphasis the place where the twisting function $f$ brings a change.

\subsection{The $C^0$-estimate}

In this section, we do need to assume the positivity of $\tau$ as in Chen-Cheng \cite{CC3}. 
Let $\psi_1$ be the $\cC^{1,\bar 1}_\b$-conic auxiliary function constructed  in equation (\ref{pe-002}),
and we have the following.
\begin{lemma}
\label{tw-lem-001}
For any $\ep_0 >0$ small enough, there exists a constant $C_{35}$ to satisfy 
$$ F + f + \ep_0\psi_1 - 4(\max_{X} |\Theta - \tau_0|_{g} + 1  )\vp \leq C_{35}, $$
where the constant depends on
$$ C_{35}: = C_{35}\left(\ep_0, \int_X F e^F \omega_{\b}^n,\ \max_X |\tau_0|_g, || R ||_0, \omega_\b, X,D,\phi,\b \right).$$
\end{lemma}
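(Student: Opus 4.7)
The plan is to run the same argument used for Theorem~(\ref{main-thm-001}) in the untwisted setting, with small modifications to absorb the extra function $f$. Specifically, following Chen-Cheng's frame combined with GAMP (Theorem~(\ref{amp-thm-001})) and the auxiliary divisor function $\psi_2$, define
$$ A(\vp, F, f) := F + f + \ep_0(\psi_1 + \psi_2) - \lambda \vp,\qquad u:=e^{\delta A}, $$
where $\psi_2:=\sum_{k=1}^d |s_k|^{2\gamma_k}$ for some $\gamma_k < \min\{\b_k, 1-\b_k\}$ (as in Lemma~(\ref{c0-lem-002})), and the constants are chosen analogously to the untwisted case, namely
$$\lambda := 4(1+\max_X|\Theta - \tau_0|_g),\qquad \delta := \frac{\a}{4n\lambda},$$
with $\a$ the constant from Tian's $\a$-invariant estimate in Lemma~(\ref{c0-lem-001}). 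Let $x_0$ be the maximum point of $u$ on $X$, and take the usual cut-off $\eta_{x_0}$ on a small ball $B_d(x_0)$.

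The key computation on $Y=X\sm\text{Supp}(D)$ is the analogue of equation~(\ref{label_01201}). The point where the twisting enters is that, by equation~(\ref{tw-003}), the sum $F+f$ satisfies
$$ \Delta_\vp(F+f) = \tr_\vp(\Theta - \tau_0) - R, $$
so the positive (and uncontrolled) current $\tau = \tau_0 + dd^c f$ cancels out completely and we are left with a quantity governed only by the fixed $(1,1)$-form $\tau_0$, whose norm is bounded by hypothesis. Combining this with the standard estimate $\Delta_\vp\psi_1\geq n I_\Phi^{-1/n}\Phi^{1/n}(F) - \tr_\vp g_\b$ and $\ep_0\Delta_\vp\psi_2 \geq -\ep_0 N_1 \tr_\vp g_\b$ (from the Donaldson metric construction), one obtains, after choosing $\theta$ and $\ep_0$ sufficiently small and the constants above,
$$ \Delta_\vp(u\eta_{x_0}) \geq \delta\eta_{x_0} e^{\delta A}\Bigl(-\|R\|_0 - \lambda n + \ep_0 n I_\Phi^{-1/n}\Phi^{1/n}(F)\Bigr) $$
on $Y$. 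The argument of Lemma~(\ref{c0-lem-002}) applies verbatim (the choice $\gamma_k<\b_k$ makes $\Delta \psi_2$ dominate all $|z_k|^{2\b_k-2}$ contributions near $D$, and the extension theorem for subharmonic functions rules out contact points on $\text{Supp}(D)$): the upper contact set $\tg^+$ of $u\eta_{x_0}$ is contained in an open subset of $B_d(x_0)$ disjoint from the divisor, so GAMP is applicable.

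Applying Theorem~(\ref{amp-thm-001}) then yields a bound of the form
$$ \sup_X u\eta_{x_0} \leq (1-\theta)\sup_X u + C_n d\left(\int_{B_d(x_0)\cap\Omega^-} e^{2F} u^{2n} \bigl(\|R\|_0 + \lambda n - \ep_0 n I_\Phi^{-1/n}\Phi^{1/n}(F)\bigr)^{2n}\omega^n\right)^{\frac{1}{2n}}, $$
where $\Omega^-$ is the set on which the integrand is nonzero; on $\Omega^-$ one has $F\leq C(\ep_0,I_\Phi,\|R\|_0)$. Here the crucial observation is that the condition $\sup_X f = 0$ gives $f\leq 0$, so every factor $e^{c\delta f}$ appearing in the expansion of $e^{2F}u^{2n}$ is bounded above by $1$, and the integral is dominated by a multiple of $\int_X e^{-\a\vp}\omega^n$, which is uniformly bounded by Lemma~(\ref{c0-lem-001}). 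Rearranging and using the uniform boundedness of $\psi_2$ on $X$ gives the stated inequality.

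The main obstacle is ensuring that the presence of $f$ does not destroy either the ellipticity estimates or the integrability of the right-hand side in GAMP. Both difficulties are handled cheaply: the first via identity~(\ref{tw-003}), which absorbs $\tau$ and leaves the $L^\infty$-form $\tau_0$; the second via the normalization $\sup_X f=0$, which renders the $L^{p_0}$ assumption on $e^{-f}$ unnecessary at this zero-order stage (it will only be needed later for the gradient estimate in Theorem~(\ref{intr-thm-003})(vi)).
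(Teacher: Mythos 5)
Your proposal is essentially identical to the paper's own proof: same auxiliary function $A = F + f + \ep_0\psi_1 + \ep\psi_2 - \lambda\vp$ (up to notational choice of the $\psi_2$-coefficient and a harmless halving of $\delta$), same use of identity~(\ref{tw-003}) to swap $\tau$ for the bounded form $\tau_0$, same invocation of Lemma~(\ref{c0-lem-002}) to localize the upper contact set away from $\text{Supp}(D)$, same application of GAMP, and the same observation that $f\leq 0$, $\psi_1\leq 0$, $|\psi_2|$ bounded let one dominate the resulting integral by $\int e^{-\a\vp}\omega^n$. No substantive discrepancy.
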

\begin{proof}
Let $\psi_2: = \sum_{k=1}^d |s_k|^{\g_k}$ be the potential of Donaldson's metric $\omega_{D,\g}$ with angles $\g_k < \b_k$ along each $D_k$.
Denote a function $A(\vp, F, f)$ by 
$$ A(\vp, F, f): = F + f + \ep_0\psi_1 + \ep\psi_2 - \lambda\vp, $$
where $\lambda: = 4(\max_X |\Theta - \tau_0|_{g_\b} + 1)$, $\ep\in (0,1)$ and $\ep_0 >0$ is an arbitrary small number.
From equation (\ref{tw-003}), we compute on $Y$ as  
\begin{equation}
\label{tw-004}
\begin{split}
\Delta_\vp A &= \tr_\vp (\Theta - \tau_0) - R + \ep_0\Delta_\vp \psi_1 + \ep \Delta_\vp \psi_2 - \lambda n + \lambda\tr_\vp g_\b
\\
& \geq (\lambda - \ep_0 - \ep N_1 -  \lambda /4) \tr_\vp g_{\b} +  \ep_0 I_{\Phi}^{-\frac{1}{n}} \Phi^{\frac{1}{n}}(F) - (R + \lambda n)
\\
& \geq \frac{\lambda}{2} \tr_\vp g_{\b} + \ep_0 I_{\Phi}^{- \frac{1}{n}} \Phi^{\frac{1}{n}}(F) - C_{36},
\end{split}
\end{equation}
where the constant $C_{36}$ only depends on $|| R ||_0$, $|\Theta|_g$ and $|\tau_0|_g$.

Let the point $p$ be the maximum point of the function $u: = e^{\delta A}$, with $\delta: = (2n\lambda)^{-1}\a$.
Suppose $\eta_{p}$ is a cut off function in a coordinate ball $B_d(p)$ with radius $d$ centred at $p$,
such that $\eta_p (p) =1$ and $\eta_p = 1-\theta$ outside $B_{d/2}(p)$ for some $0 < \theta <1$.
Taking $\theta $ so small that it satisfies 
$$ \frac{(1-\theta)\a}{4n} - \frac{4\theta}{d^2} - \frac{4\theta^2}{d^2(1-\theta)} \geq 0, $$
we conclude with the following inequality on $B_d(p) \bigcap Y$
\begin{equation}
\label{tw-005}
\Delta_\vp(u\eta_p)\geq e^{\delta A}  \delta \eta_p \left( \ep_0 I_{\Phi}^{- \frac{1}{n}} \Phi^{\frac{1}{n}}(F) - C_{36}  \right).
\end{equation}

Moreover, since $f\in\cC^{2,\a,\b}$, the function $u\eta_p$ is strictly subharmonic in 
an open neighbourhood of the divisor, as we proved in Lemma (\ref{c0-lem-002}).
Then the upper contact set $\tg^+_{(u\eta_p)}$ is contained in an open subset $V_1$ of $B_d(p)$,
such that $V_1$ is disjoint from the divisor with a positive distance.  
Therefore, we apply GAMP to the function $u\eta_p$ on $B_d(p)$ to have 
\begin{equation}
\label{tw-006}
\begin{split}
& e^{\delta A}\eta_p (p) \leq \sup_{\d B_d(p)} e^{\delta A} \eta_p  
\\
& + C_n d \left( \int_{B_d(p)} e^{2F} e^{2n\delta A} \left\{ (\ep_0 I_{\Phi}^{- \frac{1}{n}} \Phi^{\frac{1}{n}}(F) - C_{36} )^-  \right\}^{2n} \omega^n  \right)^{\frac{1}{2n}}.
\end{split} 
\end{equation}
Since $f\leq 0$, $\psi_1\leq 0$, and $| \psi_2 | \leq 1$, 
the integral on the RHS of equation (\ref{tw-006}) is controlled by 
\begin{equation}
\label{tw-007}
\begin{split}
& C^{2n}_{36}\int_{B_d(p) \bigcap \{ F \leq C_{37} \}} e^{2F + 2n\delta F} e^{-2n\delta\lambda\vp} \omega^n 
\\
& \leq C^{2n}_{36} e^{(2+ 2n\delta)C_{37}} \int_{B_d(p)} e^{-\a\vp}\omega^n \leq C_{38},
\end{split}
\end{equation}
for some uniform constant $C_{37}$ depending on $\ep_0$, $\Phi$, $I_{\Phi}$ and $C_{36}$.  
Since $\eta_p = 1-\theta$ on $\d B_d(p)$, our result follows. 

\end{proof}

\begin{corollary}
\label{tw-cor-001}
There exists a constant $C_{37}$ such that 
$$  F+ f \leq C_{37}; \ \ \ ||\vp ||_0\leq C_{37}; \ \ \ || \psi_1 ||_0\leq C_{37}. $$
Here  the constant $C_{37}$ depends 
on the same things as the constant $C_{35}$ in Lemma (\ref{tw-lem-001}),
and also on $p_0$ and $\int_X e^{-p_0 f}\omega_\b^n$.
Moreover, it is uniformly bounded if $p_0 \geq 1 + \ep$ for some small $\ep>0$.
\end{corollary}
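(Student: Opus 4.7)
The plan is to mimic the proof of Corollary (\ref{c0-cor-001}) from the untwisted case, this time exploiting the $L^{p_0}$-integrability of $e^{-f}$ to absorb the twist. The logical order is: first extract exponential $L^r$-integrability of $F+f$ directly from Lemma (\ref{tw-lem-001}); then pass to an $L^p$-bound on the Monge-Amp\`ere density via a three-factor H\"older inequality; then invoke the Ko\l odziej--Benelkourchi--Guedj--Zeriahi theorem to obtain the $L^{\infty}$-bound on $\vp$ (and similarly on $\psi_1$); and finally substitute the resulting bound on $\psi_1$ back into Lemma (\ref{tw-lem-001}) to conclude the pointwise upper bound on $F + f$.

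From Lemma (\ref{tw-lem-001}) together with the normalisations $\vp \leq 0$ and $\psi_1 \leq 0$, we obtain the pointwise inequality $F + f \leq C_{35} - \ep_0 \psi_1$. Since $\psi_\b + \psi_1$ is $\omega$-psh with uniformly bounded supremum, Lemma (\ref{c0-lem-001}) yields a uniform $\a > 0$ with $\int_X e^{-\a \psi_1} \omega^n \leq C$. Choosing $\ep_0 = \a/q$ therefore gives
$$ \int_X e^{q(F+f)}\, \omega^n \leq e^{qC_{35}} \int_X e^{-\a \psi_1}\, \omega^n \leq C(q) $$
for any prescribed $q > 0$.

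Next, rewrite the twisted Monge-Amp\`ere equation (\ref{tw-001}) with respect to the smooth reference metric $\omega$ as $(\omega + dd^c(\psi_\b + \vp))^n = \mu\, \omega^n$, with density $\mu = e^{F + \hat f}/\prod_k |s_k|^{2(1-\b_k)}$ where $\hat f$ is the smooth Calabi-Yau potential of equation (\ref{csck-000}). Factorising $e^{pF} = e^{p(F+f)} e^{-pf}$ and applying H\"older with exponents $r,s,t$ satisfying $r^{-1}+s^{-1}+t^{-1}=1$, one bounds $\int_X \mu^p \omega^n$ by the product of $\| e^{p(F+f)} \|_{L^r(\omega^n)}$, $\| e^{-pf} \|_{L^s(\omega^n)}$, and $\| e^{p\hat f}/\prod_k |s_k|^{2p(1-\b_k)} \|_{L^t(\omega^n)}$. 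For $p>1$ sufficiently close to $1$, choose $s = p_0/p > 1$ (this is precisely where $p_0 > 1$ is needed, and it remains uniform when $p_0 \geq 1 + \ep$) and $t$ such that $pt(1-\b_k) < 1$ for every $k$, making the third factor finite; the first factor is controlled by the previous step. Ko\l odziej's theorem then produces a uniform $L^{\infty}$-bound on $\psi_\b + \vp$, hence on $\vp$. The same scheme applied to equation (\ref{pe-002}) yields the $L^{\infty}$-bound on $\psi_1$, since $\Phi(F) = \sqrt{F^2+1} \leq C_{\ep_1} e^{\ep_1 |F|}$ can be absorbed into a slightly larger exponent. Substituting the now-uniform bound on $\psi_1$ back into Lemma (\ref{tw-lem-001}) and using $\vp \leq 0$ gives $F+f \leq C_{37}$.

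The main obstacle is the simultaneous balancing of the three competing integrability constraints in the H\"older step: the exponential integrability of $F+f$ (from Step 1), the $L^{p_0}$-integrability of $e^{-f}$ (from the hypothesis), and the conic singularity of the reference density. These constraints are compatible precisely when $p_0 > 1$; the admissible range of triples $(r,s,t)$ degenerates as $p_0 \to 1^+$, but stays uniformly bounded when $p_0 \geq 1 + \ep$, which gives the uniform dependence claimed in the corollary.
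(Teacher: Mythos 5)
Your overall scaffolding (exponential integrability of $F+f$ from Lemma (\ref{tw-lem-001}), then H\"older to get the density in $L^p$, then Ko\l odziej/BGZ, then substitute back) is the right one, but the concrete H\"older split you propose does not give the claimed range of $p_0$. Count the exponents: with $s=p_0/p$ and the three-factor split $e^{p(F+f)}\cdot e^{-pf}\cdot e^{p\hat f}\prod_k|s_k|^{-2p(1-\b_k)}$, the third factor lies in $L^t(\omega^n)$ only when $pt(1-\b_k)<1$ for all $k$, i.e. $1/t>p\max_k(1-\b_k)$, while the conjugacy relation forces $1/t<1-p/p_0$. For both to hold one needs $p\,(\max_k(1-\b_k)+1/p_0)<1$; taking $p>1$ (as Ko\l odziej requires) this reads $p_0>1/\min_k\b_k$. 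That is a genuinely stronger hypothesis than $p_0>1$, and it is incompatible with the corollary's claim that the constant is uniform once $p_0\geq 1+\ep$, especially when some $\b_k$ is close to $0$.

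The paper avoids this by never isolating the conic density as its own H\"older factor. It first upgrades your Step~1 to exponential integrability of $F+f$ against the \emph{conic} volume form, $\int_X e^{q(F+f)}\omega_\b^n\leq C(q)$, using Proposition~(\ref{app-prop-001}) — the conic $\a$-invariant built in the Appendix, which applies to $\psi_1$ since $\psi_\b+\psi_1$ is $\omega$-psh — rather than the smooth Lemma~(\ref{c0-lem-001}) alone. Then in equation~(\ref{tw-009}) it makes a two-factor split: $e^{-(1+\ep)f}$ goes into $L^{p_0/(1+\ep)}(\omega_\b^n)$, and the remaining term $e^{(1+\ep)(F+f)}e^{-\ep\psi}\omega_\b^n$ is re-read as $e^{q(F+f)}\omega_{\b'}^n$ with a slightly smaller cone angle $\b'_k=(1+\tfrac{p_0}{m-1})\b_k-\tfrac{p_0}{m-1}$, which is then controlled by the conic exponential integrability (applied at angle $\b'$). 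This costs only $\ep=\tfrac{p_0-1}{m}>0$ and so works for all $p_0>1$, degenerating only as $p_0\to1^+$, which matches the stated dependence.

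So the gap in your argument is twofold and linked: (i) Step~1 only produces integrability against $\omega^n$, whereas the H\"older bookkeeping needs it against the conic measure (this is exactly what Proposition~(\ref{app-prop-001}) provides); and (ii) keeping $\prod_k|s_k|^{-2p(1-\b_k)}$ as a separate $L^t$-factor shrinks the admissible window to the point of requiring $p_0>1/\min_k\b_k$. Absorbing the singular density into the $e^{F+f}$ term — i.e. changing the reference cone angle rather than isolating the singularity — is the essential trick you are missing.
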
 
\begin{proof}
We obtain from Lemma (\ref{tw-lem-001}) that 
$$  \a\ep_0^{-1}( F+ f ) \leq -\a\psi_1 + \ep_0^{-1} \a C_{35}.$$
For any $p$, we chose $\ep_0$ small enough to have $p: = \ep_0^{-1}\a$,
and then Proposition (\ref{app-prop-001}) implies that there exists a uniform constant $C_{38}$(may depend on $\b$) to satisfy,
\begin{equation}
\label{tw-008}
\int_X e^{p (F+f)} \omega_{\b}^n \leq C_{38},
\end{equation}
for every conic angle $\b$.

Let $\psi: = \sum_{k=1}^d(1-\b_k) \log |s_k|^2 $,
and then we have from H\"older's inequality 
\begin{equation}
\label{tw-009}
\begin{split}
&  || e^{F -\psi} ||_{L^{1+\ep}}^{1+\ep} =
\int_X  \frac{e^{(1+\ep)F - \ep\psi}dV}{\prod_{k=1}^d |s_k|^{2(1-\b_k)}} = \int_X  \frac{e^{(1+\ep)(F+f) - \ep\psi - (1+\ep) f}dV}{\prod_{k=1}^d |s_k|^{2(1-\b_k)}}
\\
& \leq 
\left(  \int_X e^{-p_0 f} \omega_\b^n \right)^{\frac{1+\ep}{p_0}} 
\left( \int_X e^{\frac{p_0 (1+\ep)}{p_0 -1 -\ep}(F+f)} e^{-\frac{p_0\ep}{p_0 -1 -\ep}\psi} \omega_\b^n \right)^{1-\frac{1+\ep}{p_0}}.
\end{split}
\end{equation}
Pick up $\ep =  \frac{p_0 -1}{m}$ for some larger integer $m$, 
and then the second integral on the RHS of the above inequality is equal to 
$$ \int_X \frac{ e^{q (F+f)} dV}{ \left( \prod_{k=1}^d |s_k|^2 \right)^{(1-\b_k) (1+ \frac{p_0}{m-1}) }}, $$
for $q: = \frac{p_0(1+\ep)}{p_0 -1 -\ep} $.
Therefore, it is uniformly bounded by equation (\ref{tw-008}) with a slightly smaller angle $\b'$, where $\b'_k = (1+\frac{p_0}{m-1})\b_k - \frac{p_0}{m-1} $.
Then we conclude the potential estimates as 
$$  ||\phi ||_0, \ || \psi_1 ||_0 \leq C_{37},$$
and the upper bound of $F+f$ follows from Lemma (\ref{tw-lem-001}) again. 
\end{proof}

The next step is to prove the lower bound of $F+f$. 
\begin{lemma}
\label{tw-lem-002}
There exists a uniform constant $C_{39}$ such that 
$$ F+ f \geq - C_{39}, $$
and this constant has the same dependence as $C_{37}$, with also $||\vp||_0$.
\end{lemma}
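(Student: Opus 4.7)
The plan is to adapt the non-degeneracy argument of Lemma (\ref{nd-lem-001}) to the twisted setting, exploiting the fact that $F+f$ (rather than $F$ alone) satisfies a well-controlled equation via (\ref{tw-003}), in which only the bounded $(1,1)$-form $\tau_0$ (not the possibly unbounded $\tau$) appears.

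First I would introduce
\[ A_2(\vp,F,f) := -(F+f) - \lambda \vp + \ep_2 \psi_2, \qquad u_2 := e^{\delta A_2}, \]
where $\psi_2 := \sum_{k=1}^d |s_k|^{2\g_k}$ is the Donaldson-type potential with $\g_k < \b_k$, and the constants are taken as
\[ \lambda := 2(\max_X |\Theta - \tau_0|_g + 1), \qquad \delta := \frac{1}{2n}, \qquad \ep_2 := \frac{1}{2 N_1}, \]
with $N_1$ from (\ref{c0-0115}). Let $x_1$ denote a maximum point of $u_2$. Using (\ref{tw-003}) and $\ep_2 \Delta_\vp \psi_2 \geq -\ep_2 N_1 \tr_\vp g_\b$, a direct computation on $Y$ gives
\[ \Delta_\vp A_2 \geq \tfrac{1}{2}\tr_\vp g_\b - (\lambda n + \|R\|_0). \]
After multiplying by the standard cut-off $\eta_{x_1}$ supported in a coordinate ball $B_d(x_1)$ and absorbing the error terms in $|\nabla \eta_{x_1}|^2/d^2$ and $\Delta \eta_{x_1}$ against the good $\tfrac{1}{2}\tr_\vp g_\b$ term (by choosing $\theta$ small, exactly as in Lemma (\ref{nd-lem-001})), we obtain
\[ \Delta_\vp (u_2 \eta_{x_1}) \geq -\delta e^{\delta A_2}(\lambda n + \|R\|_0) \qquad \text{on } B_d(x_1)\setminus \text{Supp}(D). \]

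Next, as in Lemma (\ref{c0-lem-002}), I would verify that $u_2 \eta_{x_1}$ is strictly subharmonic with respect to the Euclidean Laplacian in a punctured neighborhood of $\text{Supp}(D)\cap B_d(x_1)$: the $\cC^{2,\a,\b}$ regularity of $F$ and $f$ controls $\Delta(F+f)$ by sums of $|z_k|^{2\b_k - 2}$, while $\Delta \psi_2$ grows like $|z_k|^{2\g_k - 2}$ and dominates since $\g_k < \b_k$. The extension-of-subharmonic-functions argument then forces $\tg^+_{u_2 \eta_{x_1}} \cap \text{Supp}(D) = \emptyset$, so GAMP (Theorem (\ref{amp-thm-001})) applies in $B_d(x_1)$ and yields
\[ e^{\delta A_2}(x_1) \leq (1-\theta)\sup_X u_2 + C_n\, d \left( \int_X e^{2F} e^{2n\delta A_2}(\lambda n + \|R\|_0)^{2n}\omega^n \right)^{\!\!1/(2n)}. \]
Since $2n\delta = 1$, the integrand reduces to $e^{F-f}$ times factors that are uniformly bounded by Corollary (\ref{tw-cor-001}) (through $\|\vp\|_0$ and $\|\psi_2\|_0$). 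Writing $e^{F-f} = e^{F+f}\cdot e^{-2f}$, using $F+f \leq C_{37}$ from that corollary, and combining the $L^p$ integrability of $e^{F+f}$ with the prescribed $L^{p_0}$ integrability of $e^{-f}$ via H\"older, the integral is controlled by a constant depending on $p_0$, $\int_X e^{-p_0 f}\omega_\b^n$, $\|\vp\|_0$ and the remaining data. Absorbing the prefactor $(1-\theta)$ then delivers $F+f \geq -C_{39}$.

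The main obstacle is handling this final integrability: in the non-twisted case (Lemma (\ref{nd-lem-001})), one merely needed $e^F \in L^1(\omega^n)$, which is automatic; here the GAMP error term unavoidably carries an additional factor $e^{-f}$ coming from the $\Delta_\vp \psi_2$-weighted auxiliary function paired with the exponent $-(F+f)$ in $A_2$, and one must use both the refined integrability of $e^{F+f}$ from the previous section and the hypothesis $e^{-p_0 f}\in L^1(\omega_\b^n)$. The Donaldson-type potential $\psi_2$ again plays the essential role of deflecting the upper contact set away from the divisor, which is what allows us to apply GAMP in the first place rather than the classical AMP.
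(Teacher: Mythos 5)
Your proof follows essentially the same route as the paper: same auxiliary function $A := -(F+f) - \lambda\vp + \ep\psi_2$ built from Donaldson's potential $\psi_2$, same strict-subharmonicity argument to push $\tg^+$ off the divisor so that GAMP applies, and then a bound of the GAMP error integral using the integrability of $e^{-p_0 f}$. The one genuine difference is the choice of $\delta$: you take $\delta = \frac{1}{2n}$ (copying the untwisted Lemma (\ref{nd-lem-001})), which turns the GAMP integrand into $e^{F-f}$ and hence, after using the pointwise bound $F+f\leq C_{37}$, into $e^{-2f}$; the paper instead takes the $p_0$-dependent $\delta = \frac{p_0}{2n(p_0-1)}$, so that $2n\delta = \frac{p_0}{p_0-1}$ and the H\"older split of the error integral delivers precisely $\int e^F$ and $\int e^{-p_0 f}$. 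The two choices have the same range of validity — your $\int e^{-2f}$ step requires $p_0\geq 2$, and the paper's H\"older exponents $\frac{p_0-1}{p_0-2}$ and $p_0-1$ also require $p_0\geq 2$ — but the paper's $\delta$ is the more natural one, since it lands exactly on the hypothesised quantity $\int e^{-p_0 f}$. Your final sentence about ``combining the $L^p$ integrability of $e^{F+f}$ with the prescribed $L^{p_0}$ integrability of $e^{-f}$ via H\"older'' is loose: once you invoke the pointwise bound $e^{F+f}\leq e^{C_{37}}$ there is nothing left to H\"older on that factor, and what is actually required is simply $e^{-2f}\in L^1$, which for $f\leq 0$ follows from $e^{-p_0 f}\in L^1$ only when $p_0\geq 2$. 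It would be cleaner to state this restriction explicitly, or to adopt the paper's $p_0$-tuned $\delta$.
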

\begin{proof}
Take a function $A_3(F, f, \vp): = - F -f -\lambda\vp + \ep\psi_2$,
and put $u: = e^{\delta A_3}$. Pick up the constants as 
$$  \lambda: = 4(\max_X|\Theta - \tau_0 |_g + 1);\ \ \ \delta = \frac{p_0}{2n(p_0-1)};\ \ \  \ep = \frac{1}{N_1}.$$
Then we have
\begin{equation}
\label{tw-010}
- \Delta_\vp A_3 \leq  -\tr_\vp \omega_\b + || R ||_0 + \lambda n.
\end{equation}

Assume that $u$ achieves its maximum at the point $x$ on the manifold.
We can consider the function $u\eta_x$,
with a suitable chosen cut-off function $\eta_{x}$ centred at $x$.
Following the same calculation as in Lemma (\ref{nd-lem-001}), we have 
$$ \Delta_\vp (u\eta_x) \geq - \delta e^{\delta A_3} \left(  || R ||_0 + \lambda n \right).  $$

Since $F, f, \vp\in \cC^{2,\a,\b}$, 
the function $u\eta_{x}$ is again strictly subharmonic near the divisor
Then we can apply GAMP locally on a coordinate ball $B_d(x)$ to have 
\begin{equation}
\label{tw-011}
\begin{split}
& e^{\delta A_3}\eta_{x}(x) \leq \sup_{\d B_d(x)} e^{\delta A_3}\eta_{x}
\\
&+  C_n d \left( \int_X e^{2F} e^{-2n\delta A_3} ( || R||_0 + \lambda n)^{2n}\omega^n \right)^{\frac{1}{2n}}.
\end{split}
\end{equation}
Therefore, the lower bound of $F+f$ is controlled by 
\begin{equation}
\label{tw-012}
\begin{split}
& C_{40}\int_X e^{(2-2n\delta )F - 2n\delta f } \omega^n
\\
& \leq C_{40}  \left(  \int_X e^F \omega^n \right)^{\frac{p_0 -2}{p_0 -1}} \left( \int_X e^{-p_0 f} \omega^n \right)^{\frac{1}{p_0-1}},
\end{split}
\end{equation}
and the first integral in the above equation is uniformly bounded as we can see from equation (\ref{tw-009}).

\end{proof}

\subsection{The $W^{2,p}$ estimate}
In order to consider this higher order estimate, we switch the background metric to Donaldson's metric as before. 

Let $(\psi, G, f)\in \cC^{2,\a,\b}$ be the tripe solution of the twisted equations with respect to the background metric $\omega_\b$,
i.e. they satisfy equations (\ref{tw-001}) and (\ref{tw-002}).
Then we define a new triple $(\vp, F, f)$ such that 
$$ \omega_\vp = \omega_D + dd^c\vp = \omega_\b + dd^c\psi, $$
and $F: = G +h$, for $h = \log\frac{\omega_\b^n}{\omega_D^n}$.
Hence they satisfy  the following equations:
\begin{equation}
\label{tw-014}
(\omega_D+ dd^c\vp)^n = e^F \omega_D^n;
\end{equation}
\begin{equation}
\label{tw-015}
\Delta_\vp F = \tr_\vp (\Theta - \tau + dd^c h ) - R,
\end{equation}
where $\tau = \tau_0 + dd^c f$.
Here $\vp, f$ are still in $\cC^{2,\a,\b}$, and $F$ is in $L^{\infty} (X)$, but  may no longer be in the space $\cC^{2,\a,\b}$. 

\begin{theorem} 
\label{tw-thm-001}
Assume $\tau \geq 0$. For any $p\geq 1$, there exists a constant $C_p$ satisfying 
$$ \int_X  e^{(p-1)f}( n + \Delta_\b \psi  )^p\omega_{\b}^n \leq C_p,$$
and this constant has the same dependence as $C_{37}$, also with $|| F+ f ||_0$, $||\vp ||_0$, and $p$.
\end{theorem}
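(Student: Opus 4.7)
My plan is to adapt the proof of Theorem \ref{main-thm-2} to the twisted setting by carefully redesigning the auxiliary function so that the weight $e^{(p-1)f}$ emerges naturally from the iteration. As in Section \ref{sec-004}, I first switch the background metric to $\omega_D$: writing $\vp := \psi + \psi_\b - \psi_D$ and $F := G + h$, and using the quasi-isometry $\omega_D \sim \omega_\b$, it suffices to prove
\[
\int_Y e^{(p-1)f}(n+\Delta\vp)^p \omega_D^n \leq C.
\]
Under this change the twisted equations become $(\omega_D + dd^c\vp)^n = e^F \omega_D^n$ together with
\[
\Delta_\vp(F+f) = \tr_\vp(\Theta - \tau_0 + dd^c h) - R,
\]
where the closed $(1,1)$-form $\Theta - \tau_0 + dd^c h$ is bounded modulo $dd^c\Psi_\g$ by (\ref{w2p-002})--(\ref{w2p-003}) and $|\tau_0|_g \in L^\infty$, while $\|F+f\|_0$ is uniformly controlled via Corollary \ref{tw-cor-001} and Lemma \ref{tw-lem-002}. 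The positivity $\tau \geq 0$ will be used to discard $\tr_\vp\tau$ terms with favourable sign.

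The key design step is the modified auxiliary function
\[
A(\vp,F,f) := -\k(F+f) + f - \k C\vp + (\k+1)\Psi, \qquad v := e^A(n+\Delta\vp),
\]
with $\k \geq 2$ and $C$ large constants and $\Psi = \Psi_\g$ the Guenancia-P\u aun weight. The crucial point of including the extra $+f$ is that
\[
A + F = (1-\k)(F+f) - \k C\vp + (\k+1)\Psi,
\]
which has exactly the structure of the non-twisted exponent in (\ref{w2p-011}), with $F$ replaced by the bounded quantity $F+f$. Combining Guenancia-P\u aun's inequality (\ref{w2p-006}) with $\Delta_\vp f = \tr_\vp\tau - \tr_\vp\tau_0 \geq -|\tau_0|_g \tr_\vp g$ and the bound on $\Theta - \tau_0 + dd^c h$, for $C$ sufficiently large I obtain on $Y$
\[
e^{-A}\Delta_\vp v \geq \tfrac{\k C}{4}(n+\Delta\vp)\tr_\vp g + \Delta F - \k C_{18}(n+\Delta\vp).
\]

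I then carry out the integration by parts (\ref{w2p-008})--(\ref{w2p-016}) almost verbatim. Multiplying by $\theta_\ep^2 v^{p-1}$ and integrating against $\omega_\vp^n = e^F \omega_D^n$, the $\Delta F$ contribution is reorganised by writing $\Delta F = \Delta(F+f) - \Delta f$ together with the algebraic identity
\[
\Delta(F+f) = \frac{\Delta B + \k C\Delta\vp - (1+\k)\Delta\Psi}{1-\k}, \qquad B := A + F;
\]
the $\Delta B$ term integrates via $\int e^B \Delta B = -\int e^B|\nabla B|^2$ and is absorbed into $|\nabla_\vp v|^2_\vp$ by Cauchy-Schwarz exactly as in the non-twisted case. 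The extra $-\int \theta_\ep^2 v^{p-1} e^B \Delta f\,\omega_D^n$ is handled by one further integration by parts, producing terms with $\nabla f$; since $f \in \cC^{2,\a,\b}$ gives $|\d f|_{\omega_{cone}}$ uniformly bounded, these error terms are absorbed by Cauchy-Schwarz using $(n+\Delta\vp) \geq n e^{F/n} \geq n e^{-\|F+f\|_0/n}$. Letting $\ep \to 0$ via (\ref{w2p-007})--(\ref{w2p-0070}) yields the key inequality
\[
\int_Y v^p(\tr_\vp g)\omega_\vp^n \leq C_p \int_Y v^p \omega_\vp^n.
\]

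Finally, I use $(n+\Delta\vp) \leq e^F(\tr_\vp g)^{n-1}$ to convert the left-hand side. A direct calculation with $F = (F+f) - f$ gives
\[
pA + \tfrac{n-2}{n-1}F = \bigl(\tfrac{n-2}{n-1} - p\k\bigr)(F+f) + \bigl(p-1 + \tfrac{1}{n-1}\bigr)f - p\k C\vp + p(\k+1)\Psi,
\]
\[
pA + F = (1-p\k)(F+f) + (p-1)f - p\k C\vp + p(\k+1)\Psi.
\]
Since $F+f$, $\vp$, $\Psi$ are uniformly bounded, the key inequality becomes the iteration step
\[
\int_Y e^{(p+\tfrac{1}{n-1}-1)f}(n+\Delta\vp)^{p+\tfrac{1}{n-1}}\omega_D^n \leq C'_p \int_Y e^{(p-1)f}(n+\Delta\vp)^p \omega_D^n.
\]
Starting from $p = 1$, where $\int_X (n+\Delta\vp)\omega_D^n$ is a topological constant as in (\ref{w2p-021}), induction in steps of $1/(n-1)$ yields the estimate for every $p \geq 1$. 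The dependence on $p_0$ enters only through $\|F+f\|_0$ via Corollary \ref{tw-cor-001}. The main obstacle is verifying that the extra $-\Delta f$ term introduced by the $+f$ summand in $A$ does not spoil the Chen-Cheng integration by parts; once the bounded gradient $|\d f|_{\omega_{cone}}$ is exploited to control it, the rest of the argument is structurally identical to the non-twisted case.
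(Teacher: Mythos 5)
Your auxiliary function $A=-\k(F+f)+f-\k C\vp+(\k+1)\Psi=-\k F-(\k-1)f-\k C\vp+(\k+1)\Psi$ is actually identical to the paper's choice $A=-\k(F+\de f+C\vp)+(\k+1)\Psi$ with $\de=(\k-1)/\k$, and your iteration step and base case match (the exponent $p-\frac{n-2}{n-1}$ you obtain appears to be the correct one; the paper's (\ref{tw-022}) seems to contain a typo). However, there is a genuine gap in the way you propose to handle the residual $\Delta f$ term.

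You claim that since $f\in\cC^{2,\a,\b}$ the quantity $|\d f|_{\omega_{cone}}$ is ``uniformly bounded'' and can be used to absorb the terms coming from an extra integration by parts on $\int\theta_\ep^2 v^{p-1}e^B\Delta f\,\omega_D^n$. This is not available: membership in $\cC^{2,\a,\b}$ gives a bound on $|\d f|_{\omega_{cone}}$ with a constant depending on $f$ itself, and that constant is not among the allowed dependences (only $p_0$, $\int e^{-p_0 f}\omega_\b^n$, $\|F+f\|_0$, $\|\vp\|_0$, $p$, and the geometry are permitted). Along the continuity path $f$ varies and its gradient is not controlled by $\sup_X f=0$, $\int e^{-p_0 f}<\infty$, or $\tau\geq 0$. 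Moreover, the integration by parts you propose produces a term proportional to $+(\k-1)\int\theta_\ep^2 v^{p-1}e^B|\nabla f|^2\,\omega_D^n$ (from $-\nabla B\cdot\nabla f$, since $\nabla B$ contains $(1-\k)\nabla f$), which has the wrong sign and no compensating negative $|\nabla f|^2$-term elsewhere; Cauchy--Schwarz cannot remove it.

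The paper avoids this entirely. In the differential inequality (\ref{tw-017}) it does \emph{not} immediately discard $\Delta_\vp f$ via $\Delta_\vp f\geq-|\tau_0|_g\tr_\vp g$; instead it keeps the term $\k(1-\de)\Delta_\vp f\,(n+\Delta\vp)$. The choice $\de=(\k-1)/\k$ is tuned so that after the decomposition of $\Delta F$ one obtains exactly the combination
\[
-\Delta_\vp f\,(n+\Delta\vp)+\Delta f,
\]
which, using $\tau=\tau_0+dd^c f\geq 0$ and the elementary inequality $\tr_g\tau\leq(\tr_g\omega_\vp)(\tr_\vp\tau)$ together with $\Delta_\vp f=\tr_\vp\tau-\tr_\vp\tau_0$, is bounded above by $|\tau_0|_g\,(n+\Delta\vp)\,\tr_\vp g$ and absorbed into the good term $\frac{\k C}{4}(n+\Delta\vp)\tr_\vp g$. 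No gradient bound on $f$ is ever invoked; the only error terms involving $\nabla\theta_\ep$ go to zero by (\ref{w2p-007})--(\ref{w2p-0070}). You should replace the extra integration by parts by this exact cancellation; the rest of your proposal then goes through.
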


Since $\omega_D$ and $\omega_{\b}$ are quasi-isometric on $X$, and their potentials $\psi_D, \psi_\b$
are uniformly bounded, it is enough to prove the following inequality
\begin{equation}
\label{tw-016}
 \int_Y  e^{(p-1)f}( n + \Delta  \vp  )^p\omega_{D}^n \leq C'_p,
\end{equation}
for some uniform constant $C'_p$.

\begin{proof}[Proof of Theorem (\ref{tw-thm-001})]
Let $\k >0, C>0, \de>0$ be constants to be determined later, and $\Psi_\g$ be the conic weight function as before. 
Define the following function:
$$  A(\vp, F, f): = -\k(F + \de f + C\vp) +(\k +1)\Psi, $$
and choose  
$$ C: = 8 (\max_X |\Theta -\tau_0|_g + \max_X |\tau_0|_g + C_{17} + C_{16} + 1). $$
Then we compute on $Y$ to have 
\begin{equation}
\label{tw-017}
\begin{split}
& \ \ \ \ e^{-A} \Delta_\vp \left(e^A (n + \Delta\vp) \right) 
\\
& \geq \frac{\k C}{4} \tr_\vp g (n+ \Delta\vp) + \Delta F + \k(1-\delta)\Delta_\vp f (n+\Delta\vp) - \k C_{40} (n+\Delta\vp).
\end{split}
\end{equation}

Let $p > 1$ and $\de_1: = \frac{p-1}{10}$. Denote $v: = e^A (n+\Delta\vp)$, and introduce the previous cut off function $\theta_\ep$ supported outside the divisor. 
Put 
$$B(\vp, F, f): = (1- \k) F - \k C\vp - \k\de f + (\k +1)\Psi,$$
and then we can play the same tricks on the integration by parts 
as in equations (\ref{w2p-008}) - (\ref{w2p-012}). 
Finally we obtain 
\begin{equation}
\label{tw-018}
\begin{split}
& \int_X \left( p-1 - \de_1 - \frac{(p-1)^2}{\k-1} \right)\theta_\ep^2 v^{p-2}|\nabla_\vp v|^2_{\vp} \omega_\vp^n
\\
& \leq -\int_X \frac{\k C}{4} \theta_\ep^2 v^p (\tr_\vp g) \omega_\vp^n + \int_X \theta_\ep^2 v^{p-1} e^A \left( \k C_{40}(n+\Delta\vp) + \frac{\k C}{\k -1} \Delta\vp  \right)\omega_\vp^n
\\
& + \int_X \theta_\ep^2 v^{p-1} e^A \frac{ \k+1}{\k -1} (-\Delta\Psi) \omega_{\vp}^n  + \de_1^{-1} {\rm{I}}_\ep + 2 {\rm{II}}_\ep 
\\
& + \int_X \theta_\ep^2 v^{p-1} e^A \left\{ -\k(1-\de) \Delta_\vp f(n+ \Delta\vp)  + \frac{\k\de}{\k -1} \Delta f      \right\}\omega_\vp^n,
\end{split}
\end{equation}
where the error terms are 
$$ {\rm{I}}_\ep: = \int_X v^p d\theta_\ep \wedge d^c\theta_\ep \wedge \omega_{\vp}^{n-1}, $$
and 
$$ {\rm{II}}_\ep: = \frac{1}{\k -1}\int_X v^{p-1} e^B  d\theta_\ep \wedge d^c\theta_\ep \wedge \omega_{D}^{n-1}.$$
Moreover, we have ${\rm{I}}_\ep, {\rm{II}}_\ep \rightarrow 0$ as $\ep \rightarrow 0$ from the property of $\theta_\ep$.
Take $\de: = \frac{\k -1}{\k}$, and then we see 
\begin{equation}
\label{tw-019}
\begin{split}
 -\k(1-\de) \Delta_\vp f(n+ \Delta\vp)  + \frac{\k\de}{\k -1} \Delta f  \leq \max_X |\tau_0|_g \tr_\vp g (n+\Delta \vp).
\end{split}
\end{equation}

Since $f\leq 0$, the lower bound of $F$ is controlled by $ - || F +f ||_0$, and 
then $(n + \Delta\vp)$ also has a uniform lower bound. 
Therefore, taking $\k: = \max\{2, \frac{10}{9}p \}$, the term on the LHS of equation (\ref{tw-018}) becomes positive.
Then we obtain 
\begin{equation}
\label{tw-020}
\int_X \frac{\k C}{8} \theta_\ep^2 v^p (\tr_\vp g)\omega_\vp^n \leq C_{41} \int_X \k \theta_\ep^2 v^p \omega_\vp^n + \de_1^{-1} {\rm{I}_\ep} + 2{\rm{II}}_\ep,
\end{equation}
Since $\k < \k C/8$ by our choice, we have the following by letting $\ep\rightarrow 0$
\begin{equation}
\label{tw-021}
\begin{split}
\begin{split}
& \int_Y  e^{\left( \frac{n-2}{n-1} - \k p     \right) F - p(\k -1)f  + p(\k+1)\Psi - p\k C\vp } (n+\Delta\vp)^{p + \frac{1}{n-1}} \omega_D^n
\\
& \leq C_{41} \int_Y e^{\left(  1- \k p    \right) F - p(\k -1)f  + p(\k+1)\Psi - p\k C\vp } (n+\Delta\vp)^{p } \omega_D^n.
 \end{split}
\end{split}
\end{equation}
Let $C_{42}$ be a bound of $|| \vp ||_0, || F + f ||_0, || \Psi ||_0$, and we further obtain 
\begin{equation}
\label{tw-022}
\begin{split}
& \int_Y e^{ \left( p - \frac{n-1}{n-2} \right) f } (n + \Delta \vp)^{ p + \frac{1}{n-1}} \omega_D^n 
\\
& \leq C_{43}e^{(p + \k p) C_{42}} \int_Y e^{(p-1)f} (n + \Delta \vp)^p \omega_D^n,
\end{split}
\end{equation}
for some uniform constant $C_{43}$ not depending on $p$ or $\k$.

Take $p: = 1 + \frac{k}{n-1}$, and we want to use induction on $k \geq 1$.
Then it is enough to prove that the integral 
\begin{equation}
\label{tw-023}
 \int_Y e^{  \frac{1}{n-1} f } (n + \Delta \vp)^{ 1 + \frac{1}{n-1}} \omega_D^n  
\end{equation}
is uniformly bounded. 
Let $ 1< p_i < 1.5 $ be sequence of real numbers decreasing to $1$,
and then we have 
\begin{equation}
\label{tw-024}
\begin{split}
& \int_Y e^{ \left( p_i - \frac{n-1}{n-2} \right) f } (n + \Delta \vp)^{ p_i + \frac{1}{n-1}} \omega_D^n 
\\
& \leq C_{44} \int_Y e^{(p_i-1)f} (n + \Delta \vp)^{p_i} \omega_D^n,
\end{split}
\end{equation}
for some uniform constant $C_{44}$ not depending on $p_i$ or $\k$.
Since $f\leq 0$, the LHS of equation (\ref{tw-024}) converges to to equation (\ref{tw-023})
by dominant convergence theorem, 
and the RHS of equation (\ref{tw-024}) converges to 
\begin{equation}
\label{tw-025}
C_{44} \int_Y  (n + \Delta \vp) \omega_D^n   = n C_{44},
\end{equation}
and our result follows.

\end{proof}

\begin{corollary}
\label{tw-cor-001}
For any $1< q < p_0$, there exists a constant $\tilde{C}_q$ satisfying 
$$ \int_X  ( \tr_{\omega_\b} \omega_\vp )^q \omega_\b^n \leq \tilde{C}_q.$$
Here the constant $\tilde{C}_q$ has the same dependence as $C_{37}$, also with $|| F + f||_0$, $|| \vp ||_0$ and $q$. 
Moreover, it is uniformly bounded in $q$ if $q$ is bounded away from $p_0$. 
\end{corollary}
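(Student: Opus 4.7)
The plan is to bootstrap from Theorem (\ref{tw-thm-001}) via H\"older's inequality, using the twisting function $f$ as an integrable weight. Recall that Theorem (\ref{tw-thm-001}) gives, for any $p\geq 1$,
$$ \int_X e^{(p-1)f}(\tr_{\omega_\b}\omega_\vp)^p \, \omega_\b^n \leq C_p,$$
and the standing hypothesis provides a uniform bound on $\int_X e^{-p_0 f}\omega_\b^n$.

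First I would write, for any $p>q$,
$$(\tr_{\omega_\b}\omega_\vp)^q = \left[e^{(p-1)f}(\tr_{\omega_\b}\omega_\vp)^p\right]^{q/p}\cdot e^{-(p-1)qf/p},$$
and apply H\"older's inequality with conjugate exponents $p/q$ and $p/(p-q)$ to obtain
$$ \int_X (\tr_{\omega_\b}\omega_\vp)^q \omega_\b^n \leq C_p^{q/p} \left(\int_X e^{-\frac{(p-1)q}{p-q}f}\omega_\b^n\right)^{\frac{p-q}{p}}.$$
The only point to check is the integrability of the second factor, which requires $\frac{(p-1)q}{p-q}\leq p_0$. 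Solving this relation for $p$ yields the admissibility condition $p\geq q(p_0-1)/(p_0-q)$, so for any fixed $1<q<p_0$ one may select $p:=q(p_0-1)/(p_0-q)$ (or any larger finite value). With this choice the weight $e^{-(p-1)q/(p-q)f}$ lies in $L^1(\omega_\b^n)$ uniformly, by the standing assumption on $\|e^{-f}\|_{L^{p_0}}$.

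Combining these two ingredients yields the asserted bound $\tilde{C}_q$, whose dependence is the same as that of $C_{37}$ augmented by $\|F+f\|_0$, $\|\vp\|_0$, and $q$ (the latter entering through the chosen exponent $p$). Finally, if $q$ stays in a compact subset of $(1,p_0)$, then $p=q(p_0-1)/(p_0-q)$ remains bounded, so $C_p$ is uniform in $q$ by the uniformity part of Theorem (\ref{tw-thm-001}), giving the last claim. The only mild subtlety is that as $q\to p_0^-$ we have $p\to+\infty$ and both factors in the H\"older estimate degenerate; this is exactly why the bound need not be uniform up to $q=p_0$, and it is consistent with the stated conclusion.
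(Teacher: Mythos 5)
Your proof is correct and is essentially the same argument as in the paper: both apply H\"older with the same split, the only difference being that you parametrize by the exponent $p = q(p_0-1)/(p_0-q)$ while the paper parametrizes by $s = p_0(q-1)/(p_0-1)$ (related by $p = qp_0/(p_0-s)$), and these lead to identical conjugate exponents and identical factors.
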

\begin{proof}
This follows from Theorem (\ref{tw-thm-001}) and H\"older's inequality.
Pick up $s: = \frac{p_0(q-1)}{p_0 -1}$, and then we have as in Chen-Cheng \cite{CC3}
\begin{equation}
\label{tw-026} 
\begin{split}
& \int_X ( \tr_{\omega_\b} \omega_\vp )^q \omega_\b^n 
\\
& \leq \left(  \int_X e^{-p_0 f} \omega_\b^n     \right)^{\frac{s}{p_0}} 
\left(  \int_X e^{\frac{sp_0}{p_0 -s} f} (\tr_{\omega_\b} \omega_\vp)^{\frac{p_0q}{p_0 -s}} \omega_\b^n   \right)^{1-\frac{s}{p_0}}.
\end{split}
\end{equation}

\end{proof}

\subsection{The gradient $F$-estimate}
As we explained before, the $C^2$ estimate is not expected in the twisted case anymore.
Therefore, we do not need to switch our background metrics in the following proof of the partial $C^3$ estimate,
i.e. the gradient estimate of $F+f$.

Let $(\vp, F ,f )\in \cC^{2,\a,\b}$ be the tripe for the twisted equations, i.e. they satisfy equations (\ref{tw-001}) and (\ref{tw-002}). 
Then we have the following estimate for $W: = F + f$.

\begin{theorem}
\label{tw-thm-002}
There exists a constant $k_n$, depending only on $n$, such that $\forall p_0 > k_n$, we have 
$$ |\nabla_\vp W |^2_\vp \leq C_{45}.$$
Here the constant $C_{45}$ has the same dependence as $C_{37}$, also with $|| F+f ||_0$ and  $||\vp||_0$. 
\end{theorem}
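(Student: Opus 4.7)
The plan is to adapt the gradient estimate portion of Theorem \ref{main-thm-3} to the twisted setting, working directly with $W := F + f$ as suggested by equation (\ref{tw-003}). The advantage of packaging $F$ and $f$ together is that
$$\Delta_\vp W = \tr_\vp(\Theta - \tau_0) - R,$$
and $\tau_0$, unlike $\tau$, is uniformly bounded; consequently $|\Delta_\vp W|$ is controlled by $C(1 + \tr_\vp g_\b)$, the quantity whose weighted $L^p$-integrability has been established in Theorem \ref{tw-thm-001}. Moreover, since $\Ricci(\omega_\vp) = \Theta - dd^c F = (\Theta - \tau_0) + \tau - dd^c W$ on $Y$ and $\tau \geq 0$, the Ricci term in the Bochner identity for $|\nabla_\vp W|^2_\vp$ admits a lower bound $(\Theta - \tau_0)(\nabla_\vp W, \bar\nabla_\vp W) - (dd^c W)(\nabla_\vp W, \bar\nabla_\vp W)$, with no uncontrolled contribution from $\tau$. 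This is the structural reason why the extra twisting is not harmful to a gradient bound, even though it destroys the Laplacian bound.

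Mimicking the Chen--Cheng computation behind equation (\ref{c2-004}), the $e^{W/2}$-trick absorbs the $dd^c W$ contribution, and one is led to the differential inequality
$$\Delta_\vp\bigl(e^{W/2}|\nabla_\vp W|^2_\vp\bigr) \geq 2 e^{W/2}\,\nabla_\vp W \cdot_\vp \nabla_\vp(\Delta_\vp W) + \tfrac{1}{C}|\ddbar W|^2_{g_\vp} - C\bigl(1 + (\tr_\vp g_\b)^{n-1}\bigr)|\nabla_\vp W|^2_\vp$$
on $Y$. Setting $u := e^{W/2}|\nabla_\vp W|^2_\vp + 1$ (no Laplacian term, since none is available), I would then run the integration-by-parts scheme of equations (\ref{c2-010})--(\ref{c2-016}): test against $\theta_\ep^2 u^{2p-1}$, transfer the critical term $\nabla_\vp W\cdot_\vp \nabla_\vp(\Delta_\vp W)$ onto $(\Delta_\vp W)^2$ and onto $\nabla u$, and let $\ep \to 0$ using (\ref{w2p-007}) and (\ref{w2p-0070}). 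The $(\Delta_\vp W)^2$ factor reduces via (\ref{tw-003}) to $(\tr_\vp g_\b)^2$, which when combined with the $(\tr_\vp g_\b)^{n-1}$ weight produces integrals of $(\tr_\vp g_\b)^{n+1}u^{2p+1}$. The starting $L^1$-bound for $u$ follows from Stokes' identity applied to $W^2$, in the spirit of (\ref{c2-023}), together with $\|W\|_0 < +\infty$ (Corollary \ref{tw-cor-001} and Lemma \ref{tw-lem-002}) and the fact that $\int_Y \tr_\vp g_\b \cdot \omega_\vp^n = n \int_X \omega_\b \wedge \omega_\vp^{n-1}$ is a cohomological constant.

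Applying the conic Sobolev inequality of Lemma \ref{c2-lem-sob} on $Y$ then converts the resulting energy inequality for $v := u^{p+1/2}$ into a self-improving estimate, which by Moser iteration yields the desired $L^\infty$ bound. The main obstacle, and where the constraint $p_0 > k_n$ enters, is precisely the integrability bookkeeping at the Sobolev step: at each iteration level one must H\"older-split the $(\tr_\vp g_\b)^{n+1}$ weight against the factor $e^{-(p-1)f}$ supplied by Theorem \ref{tw-thm-001}, and Corollary \ref{tw-cor-001} only gives $\tr_\vp g_\b \in L^q(\omega_\b^n)$ for $q < p_0$. Matching the Sobolev exponent $\mu = 2n/(n-1)$ from Lemma \ref{c2-lem-sob} against the integrability that the weights can absorb forces $p_0$ to exceed an explicit dimensional threshold $k_n$, beyond which the iteration closes and the $L^\infty$ bound on $e^{W/2}|\nabla_\vp W|^2_\vp$, hence on $|\nabla_\vp W|^2_\vp$, follows.
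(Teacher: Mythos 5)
Your proposal follows essentially the same route as the paper: you work directly with $W=F+f$, derive the same $e^{W/2}$-twisted Bochner inequality (\ref{tw-029}) using the boundedness of $\tau_0$ and positivity of $\tau$, set $u=e^{W/2}|\nabla_\vp W|^2_\vp+1$ without a Laplacian term, run the $\theta_\ep$-cut-off integration-by-parts scheme, obtain the $L^1$ bound on $u$ via Stokes, and close a Moser iteration with Lemma (\ref{c2-lem-sob}) and Theorem (\ref{tw-thm-001}), which is exactly where the dimensional threshold $k_n$ (the paper takes $p_0\ge 8n(2n-1)+1$) appears. The only slips are cosmetic: the paper integrates $\Delta_\vp e^{W/2}$ rather than $\Delta_\vp(W^2)$ for the $L^1$ bound, and your weight bookkeeping conflates $\tr_\vp g_\b$ with $\tr_{g_\b}g_\vp=n+\Delta_\b\psi$ (the paper's $U$ is an additive combination bounded by $(n+\Delta\vp)^{2n-2}$, not a product $(\tr_\vp g_\b)^{n+1}$), but neither affects the structure of the argument.
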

\begin{proof}
Since $W= F +f \in \cC^{2,\a,\b}$, we can compute on $Y$ as in Chen-Cheng \cite{CC3} to have 
\begin{equation}
\label{tw-027}
\begin{split}
& e^{-\frac{W}{2}} \Delta_\vp ( e^{\frac{W}{2}} |\nabla_\vp W|^2_\vp ) \geq \frac{1}{2} | \nabla_\vp W |_\vp^2 \left(\tr_\vp (\Theta -\tau_0) - R \right) 
\\
& + 2 \nabla_\vp W \cdot_\vp \nabla_\vp (\Delta_\vp W) + | \ddbar W |^2_\vp + g^{\bar j i}_\vp g^{\bar\la \mu}_\vp \Theta_{i\bar\la} W_\mu W_{\bar j}
\\
& - \Re \left\{   g^{\bar j i}_\vp g^{\bar\la \mu}_\vp ( \tau_{0} )_{i\bar\la} W_\mu W_{\bar j}  \right\}.
\end{split}
\end{equation}
Moreover, we estimate 
\begin{equation}
\label{tw-028}
\begin{split}
\tr_\vp (\Theta -\tau_0) - R & \geq -C_{46} ( e^{-F} (n + \Delta\vp)^{n-1} + 1)
\\
& \geq - C_{47} ( (n + \Delta\vp)^{n-1} + 1).
\end{split}
\end{equation}
Here we used the uniform lower bound of $F$ in terms of $|| F +f ||_0$. 
In a similar way, we can estimate other terms in equation (\ref{tw-027}),
and eventually have 
\begin{equation}
\label{tw-029}
\begin{split}
& \Delta_\vp(e^{\frac{W}{2}} |\nabla_\vp W|^2_\vp) \geq  2 e^{\frac{W}{2}}\nabla_\vp W \cdot_\vp \nabla_\vp (\Delta_\vp W)
\\
& - C_{48} e^{\frac{W}{2}} |\nabla_\vp W|^2_\vp \left\{ (n + \Delta\vp)^{n-1} +1    \right\}.
\end{split}
\end{equation}

Put $u: = e^{\frac{W}{2}} |\nabla_\vp W|^2_\vp + 1$, and $\tilde U: = C_{48}\left\{ (n+ \Delta\vp)^{n-1} +1 \right\}$. 
We further have 
\begin{equation}
\label{tw-030}
\Delta_\vp u \geq 2 e^{\frac{W}{2}}\nabla_\vp W \cdot_\vp \nabla_\vp (\Delta_\vp W) - u \tilde U.
\end{equation}
Pick up $p >0$, and introduce the previous cut off function $\theta_\ep$, we play the same integration by parts trick as 
in equations (\ref{c2-011})-(\ref{c2-015}), and eventually obtain the following 
\begin{equation}
\label{tw-031}
\begin{split}
& \frac{p}{2} \int_X \theta_\ep^2 u^{2p-1} |\nabla_\vp u|^2_\vp \omega_\vp^n \leq  \int_X \theta_\ep^2 u^{2p+1} (\tilde U + (\Delta_\vp W)^2 +1)
\\
& + (8p +2) \int_X \theta_\ep^2 u^{2p+1} e^{\frac{W}{2}} (\Delta_\vp W)^2 \omega_\vp^n + {\rm{V}}_\ep + p^{-1} {\rm{IV}}_\ep,
\end{split}
\end{equation}
where the error terms are 
$$ {\rm{IV}}_\ep: = \int_X u^{2p+1} d\theta_\ep \wedge d^c \theta_\ep \wedge \omega_\vp^{n-1}; $$
and 
$${\rm{V}}_\ep:  = 4\int_X \theta_\ep e^{\frac{W}{2}} u^{2p} (\Delta_\vp W) d\theta_\ep\wedge d^c W \wedge \omega_\vp^{n-1}. $$
Since $W\in \cC^{2,\a,\b}$, the two errors ${\rm{IV}}_\ep, {\rm{V}}_\ep\rightarrow 0$ as $\ep\rightarrow 0$.
Then we get by taking the limit of $\ep$
\begin{equation}
\label{tw-032}
\begin{split}
&  \frac{p}{(p+ \frac{1}{2})^2 C_{49}} \int_Y |\nabla_\vp ( u^{p + \frac{1}{2}} )|^2_\vp \omega_\b^n 
   \leq    p \int_Y u^{2p-1} |\nabla_\vp u|^2_\vp  \omega_\vp^n
 \\
 & \leq 16(p +1)\int_Y  u^{2p+1} U e^{F} \omega_\b^n,
 \end{split}
\end{equation}
where $U: = \tilde U + (\Delta_\vp W)^2 + (\Delta_\vp W)^2 e^{\frac{W}{2}} + 1 $.
Let $\de > 0$ be a small number, and $v: = u^{p+\frac{1}{2}}$. 
We use H\"older inequality again to have 
\begin{equation}
\label{tw-033}
\left( \int_Y |\nabla v|^{2-\de} \omega_\b^n \right)^{\frac{2}{2-\de}} \leq \frac{ C_{50} K_\de L_\de (p+1)^3}{p}  \left( \int_Y v^{\frac{4}{2-\de}} \omega_\b^n  \right)^{\frac{2-\de}{2}},
\end{equation}
where the coefficients are 
$$ K_\de: = \left(  \int_Y (n+\Delta\vp)^{\frac{2}{\delta} -1} \omega_\b^n \right)^{\frac{\de}{2-\de}},$$
and 
$$ L_\de: = \left(    \int_Y U^{\frac{2}{\de}} e^{\frac{2F}{\de}} \omega_\b^n    \right)^{\frac{\de}{2}}. $$

Apply the conic version of Sobolev's inequality (Lemma (\ref{c2-lem-sob})) with exponent $2-\de$ to have 
\begin{equation}
\label{tw-034}
|| u^{p+\frac{1}{2}} ||^2_{L^{\mu}(\omega_\b^n)} \leq C_{51} \frac{(p+1)^3}{p}  (K_\de L_\de +1) || u^{p+\frac{1}{2}}||_{L^{\frac{4}{2-\de}}(\omega_\b^n)},
\end{equation}
where we assumed 
$$\mu: = \frac{2n(2-\de)}{ 2n -2 +\de} > \frac{4}{2-\de}. $$

This can be realised by choosing $\de = \frac{1}{2n}$ for $n >1$.
Eventually, we can use the Moser Iteration to control the $L^{\infty}$-norm of $u$,
provided that $L_\de, K_\de$ is uniformly bounded. 
This is also true from Theorem (\ref{tw-thm-001})
by choosing $p_0 \geq 8n(2n-1) +1$.

For the uniform control on the $L^1$-norm of $u$, we see 
\begin{equation}
\label{tw-035}
\int_X \theta_\ep( \Delta_\vp e^{\frac{W}{2}} ) \omega_\vp^n = \frac{1}{2}\int_X e^{\frac{W}{2}} d\theta_\ep \wedge d^c W \rightarrow 0,
\end{equation}
as $\ep\rightarrow 0$.
Therefore, we have 
\begin{equation}
\label{tw-036}
\begin{split}
& \int_Y e^{\frac{W}{2}} |\nabla_\vp W|^2_\vp \omega_\b^n \leq C_{52} \int_Y e^{\frac{W}{2}} |\nabla_\vp W|^2_\vp \omega_\vp^n
\\
& \leq C_{52} \int_Y 2 e^{\frac{W}{2}} (-\Delta_\vp W) \omega_\vp^n 
\\
& \leq C_{52} \int_Y( 1 + \tr_{\omega_\vp}\omega_\b) \omega_\vp^n  = C_{52}(n+1)
\end{split}
\end{equation}

\end{proof}

\begin{rem}
\label{tw-rem-001}
As before, all our estimates for the twisted equations only require $(\vp, F, f)\in \cC^{1,\bar 1}_\b$. 
\end{rem}

\section{Appendix}
In this section, we will consider the $\a$-invariant for plurisubharmonic(\emph{psh}) functions integrated against conic volume form
$$d\mu: = \omega_D^n = \frac{dV}{\prod_{k=1}^d |s_k|^{2-2\b_k}}.$$
Let $PSH(X,\omega)$ denote the space of all $\omega$-$psh$ functions on $X$, and the first observation is that 
they are all $L^1$ functions with respect to the measure $\mu$.

\begin{lemma}
\label{app-lem-001}
For any $\vp\in PSH(X,\omega)$, we have 
$$\int_X \vp d\mu > -\infty. $$
\end{lemma}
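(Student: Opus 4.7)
The plan is to reduce this integrability statement to Tian's $\alpha$-invariant estimate (Lemma \ref{c0-lem-001}), exactly as was done in the proof of Corollary \ref{c0-cor-001}, combined with H\"older's inequality to separate out the singular weight $\prod_k|s_k|^{-(2-2\b_k)}$.

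First I would normalize by replacing $\vp$ with $\vp - \sup_X\vp$, which is still $\omega$-psh and now satisfies $\vp\leq 0$. Since $\mu(X)<+\infty$ (each exponent $2-2\b_k < 2$ is locally integrable in $\bC$), the only issue is to bound the negative part, i.e. to show $\int_X (-\vp)\, d\mu < +\infty$. Here I would use the elementary inequality $-t \leq \alpha^{-1} e^{-\alpha t}$, valid for all $t\leq 0$ and any $\alpha>0$, to reduce the claim to a bound on $\int_X e^{-\alpha\vp}\, d\mu$ for some well-chosen $\alpha>0$.

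Next I would apply H\"older's inequality with conjugate exponents $p,q>1$:
\begin{equation*}
\int_X e^{-\alpha\vp}\, d\mu = \int_X \frac{e^{-\alpha\vp}\, dV}{\prod_{k=1}^d|s_k|^{2-2\b_k}}
\leq \left(\int_X e^{-p\alpha\vp}\, dV\right)^{\!1/p}\!\left(\int_X \frac{dV}{\prod_{k=1}^d|s_k|^{q(2-2\b_k)}}\right)^{\!1/q}.
\end{equation*}
The second factor is finite as long as $q(1-\b_k) < 1$ for every $k$, which is possible since each $\b_k>0$: it suffices to choose any $1<q<\min_k\frac{1}{1-\b_k}$. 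With $p=q/(q-1)$ thus fixed, Lemma \ref{c0-lem-001} supplies a uniform $\alpha_0>0$ with $\int_X e^{-\alpha_0\vp}dV\leq C_1$ for all $\omega$-psh $\vp$ normalized by $\sup_X\vp=0$. Picking $\alpha:=\alpha_0/p$ makes the first factor finite, and combining everything yields $\int_X(-\vp)\,d\mu\leq \alpha^{-1}C_1^{1/p}\cdot M^{1/q}$, where $M$ denotes the (finite) integral of the singular weight raised to $q$.

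I do not expect a serious obstacle here: the only point requiring care is verifying that the range $1<q<\min_k(1-\b_k)^{-1}$ is nonempty, which follows immediately from $\b_k\in(0,1)$ for each $k$. The argument is essentially the same two-step scheme (Tian $\alpha$-invariant plus H\"older against the conic weight) that already appeared in equation (\ref{c0-004}).
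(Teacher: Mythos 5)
Your proof is correct, but it is a genuinely different argument from the one in the paper. The paper proves the lemma by a Stokes'-theorem/integration-by-parts computation: with $\vp_j := \max\{\vp,-j\}$, one expands $\int_X \vp_j (\omega + dd^c\psi_D)^n$ one factor of $\omega_D$ at a time, each time transferring $dd^c$ from $\vp_j$ to the bounded potential $\psi_D$ and using the fact that the cohomological masses $\int_X \omega_{\vp_j}\wedge\omega^{k}\wedge\omega_D^{n-1-k}$ are normalised; this yields the uniform bound $\int_X \vp_j\,\omega_D^n \geq \int_X \vp_j\,\omega^n - C$, and one concludes by monotone convergence and the classical $L^1(\omega^n)$ integrability of quasi-psh functions. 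That argument is entirely elementary pluripotential theory, relying only on the boundedness of $\psi_D$. Your route instead invokes Tian's $\alpha$-invariant (Lemma \ref{c0-lem-001}), the elementary estimate $-t\leq\alpha^{-1}e^{-\alpha t}$ for $t\leq 0$, and a H\"older split of the conic weight from the psh exponential. This is perfectly valid — the conjugate exponent $q$ can indeed be taken in $(1,\min_k(1-\b_k)^{-1})$, which is nonempty — and in fact it proves the stronger conic exponential-integrability statement (essentially Proposition \ref{app-prop-001}) in one stroke, at the cost of importing Tian's deeper result. One small trade-off worth noticing: the $\alpha=\alpha_0/p$ you produce degenerates as the angles $\b_k\to 0$ (since then $q\to 1$ and $p\to\infty$), whereas the paper's more delicate local H\"ormander-type argument (Lemma \ref{app-lem-002}) is designed to give better control in that regime. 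For the statement of Lemma \ref{app-lem-001} itself, though, your argument is complete and correct.
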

\begin{proof}
Fix a large integer $j>0$, and take $\vp_j: = \max\{\vp, -j \}$. 
The sequence of functions $\vp_j\in PSH(X,\omega)$ is decreasing to $\vp$,
and then it is enough to prove the integral 
$$\int_X \vp_j \omega_D^n $$
has a uniform lower bound. 

First we compute by Stoke's theorem
\begin{equation}
\label{app-0010}
\begin{split}
\int_X \vp_j (\omega+ dd^c\psi_D)^n &= \int_X \vp_j \omega\wedge\omega_D^{n-1} + \int_X \psi_D dd^c\vp_j \wedge \omega_D^{n-1}
\\
& = \int_X \vp_j  \omega\wedge\omega_D^{n-1} + \int_X \psi_D \omega_{\vp_j}\wedge \omega_D^{n-1} -  \int_X \psi_D \omega \wedge \omega_D^{n-1} 
\end{split}
\end{equation}
The third term is uniformly controlled, and the second term can be estimated as  
$$   \int_X \psi_D \omega_{\vp_j}\wedge \omega_D^{n-1} \geq \inf_X \psi_D \int_X \omega_{\vp_j}\wedge \omega_D^{n-1} \geq \inf_X \psi_D. $$
Moreover, the first term can be written as
\begin{equation}
\label{app-0011}
\int_X \vp_j \omega\wedge\omega_D^{n-1} = \int_X \vp_j \omega^2\wedge\omega_D^{n-2} + \int_X \psi_D dd^c\phi_j \wedge\omega\wedge\omega_D^{n-2}.
\end{equation}
Repeating this trick, we are able to prove 
$$ \int_X \vp_j \omega_D^n \geq \int_X \vp_j \omega^n - C, $$
for some uniform constant $C$, and our result follows. 

\end{proof}





Therefore, we proved that $PSH(X,\omega)\subset L^1(\mu)$.
Thanks to Guedj-Zeriahi's work ( Proposition (2.7), \cite{GZ}),
there exists a uniform constant $C_{\mu}>0$ such that $\forall \vp\in PSH(X,\omega)$,
\begin{equation}
\label{app-002}
-C_{\mu} + \sup_X\vp \leq \int_X \vp d\mu \leq \sup_X \vp.
\end{equation}

Before proceeding to the $\a$-invariant, we need to improve a theorem by H\"ormander (Theorem 4.45, \cite{Ho}) to the conic case.

\begin{lemma}
\label{app-lem-002}
Let $\cF$ be the family of all plurisubharmonic function $\phi$ in the unit ball $B\subset \bC^n$,
such that $\phi(0) = 0$ and $\phi(z) \leq 1$ for all $z$ close to the boundary $\d B$.
For any $\b: = \{ \b_k \}_{k=1}^d, 0<\b_k <1$, there exists two constants $ 0< r< 1$ and $C$ satisfying 
$$ \int_{B_r}  \frac{ e^{-\phi} d\lambda_{2n}}{\prod_{k=1}^d |z_k|^{2-2\b_k}} \leq C, \ \ \ \forall \phi\in \cF,$$
where $d\lambda_{2n}$ is the Lebesgue measure on $\bC^n$,
and the constants $r$ and $C$ only depend on $\b$ and $n$.
\end{lemma}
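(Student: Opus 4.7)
The plan is to decouple the weight $\prod_k |z_k|^{2-2\beta_k}$ from the plurisubharmonic factor $e^{-\phi}$ via H\"older's inequality, then invoke the classical H\"ormander theorem (the non-conic version, which is cited as Theorem 4.45 of H\"ormander) with a suitably chosen exponent.

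First, I would exploit the fact that every $\beta_k > 0$ to choose $q > 1$ satisfying $q(1-\beta_k) < 1$ for all $k = 1,\dots,d$. Concretely, any $q$ with $1 < q < \min_k 1/(1-\beta_k)$ works, e.g.\ $q = \tfrac{1}{2}(1 + \min_k 1/(1-\beta_k))$. Set $p := q/(q-1)$, so that $1/p + 1/q = 1$ and both $p, q$ depend only on $\b$. H\"older's inequality then gives
\begin{equation*}
\int_{B_r} \frac{e^{-\phi}\, d\lambda_{2n}}{\prod_{k=1}^d |z_k|^{2-2\b_k}}
\leq \left( \int_{B_r} e^{-p\phi}\, d\lambda_{2n} \right)^{1/p}
\left( \int_{B_r} \prod_{k=1}^d |z_k|^{-q(2-2\b_k)}\, d\lambda_{2n} \right)^{1/q}.
\end{equation*}

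Second, the weight integral on the right is explicit. By Fubini and polar coordinates in each $z_k$-plane, each factor
$\int_{|z_k|<r} |z_k|^{-q(2-2\b_k)}\, dA(z_k) = \frac{\pi}{1 - q(1-\b_k)}\, r^{2 - 2q(1-\b_k)}$
is finite precisely because $q(1-\b_k) < 1$. So this whole factor is a finite constant depending only on $\b, n, r$.

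Third, for the remaining $e^{-p\phi}$ integral, I would observe that if $\phi \in \cF$ then $p\phi$ is plurisubharmonic with $(p\phi)(0) = 0$ and $p\phi \leq p$ near $\d B$. This is again a family of the form considered by H\"ormander, just with upper bound $p$ in place of $1$; applying the classical theorem to this scaled family produces uniform constants $r_p > 0$ and $C_p$, depending only on $n$ and $p$ (hence only on $n$ and $\b$), such that
\begin{equation*}
\int_{B_{r_p}} e^{-p\phi}\, d\lambda_{2n} \leq C_p \qquad \forall\,\phi\in\cF.
\end{equation*}
Taking $r := r_p$ and combining the three steps yields the claimed bound with $C$ depending only on $\b$ and $n$.

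The argument is essentially routine given the classical H\"ormander theorem; the only mild subtlety is the choice of $q$ compatible with the cone angles, and this is possible exactly because each $\b_k$ is strictly positive. If any $\b_k$ were zero (no angle, full integer weight), the weight integral would diverge and the method would break down — but this is precisely the klt hypothesis.
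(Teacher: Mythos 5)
Your H\"older decoupling is a genuinely different route from the paper's, and it does work, but it leans on a slightly stronger version of the ``classical'' lemma than the one literally cited. The paper proves the conic estimate directly: it carries the weight $|z|^{2\b-2}$ through the one-variable Riesz/Jensen argument (getting the condition that the local mass $a$ of $dd^c\phi$ near $0$ satisfies $a<2\b$), and then reduces $n$ variables to one variable by a polar-coordinate slicing over the sphere, so that the effective radial weight is $|w|^{2b-2}$ with $b=n-d+\sum\b_k>0$. Your version instead strips off the weight at the very start and pushes the whole difficulty into the unweighted integral $\int e^{-p\phi}$; this is cleaner to read and makes the klt hypothesis ($\b_k>0$ $\Leftrightarrow$ the weight lies in $L^q$ for some $q>1$) transparent. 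The price is that you now need the H\"ormander-type estimate in the form
\[
\int_{B_{r_p}} e^{-p\phi}\,d\lambda_{2n}\le C_p \qquad\text{for all }\phi\in\cF,
\]
with $r_p, C_p$ depending only on $n$ and $p$, and this is \emph{not} the literal statement of the theorem you cite, which is normalized with the factor $1$ rather than $p$. Your remark that ``this is again a family of the form considered by H\"ormander, just with upper bound $p$ in place of $1$'' glosses over the fact that $e^{-p\phi}$ is not $e^{-\psi}$ for any $\psi$ in the family $\cF$ (rescaling $\phi\mapsto\phi/p$ moves you in the wrong direction). What saves the argument is that the Riesz-representation proof of the unweighted estimate does scale: the normalization $\phi(0)=0$, $\phi\le 1$ near $\partial B$ still gives $\int_{|\z|<1}\log\frac{1}{|\z|}\,d\nu\le 2\pi$, and then the local mass of $p\,d\nu$ on $\{|\z|<R\}$ is $\le \frac{p}{-\log R}$, which is $<2$ once $R<e^{-p/2}$, so Jensen's inequality closes the argument on $B_{r_p}$ with $r_p\sim e^{-p/2}$. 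In other words, your step~3 is correct, but it is an easy \emph{extension} of the cited theorem obtained by re-running its proof, not an application of it as stated; you should say so, or simply prove it, since it is exactly this scaling behaviour that the paper's direct argument makes explicit. Modulo that one clarification, the proposal is sound, and the observation that $\beta_k>0$ (klt) is precisely what makes the weight $L^q$ for some $q>1$ is a nice way to see why the hypothesis is sharp.
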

\begin{proof}
Assume $n=1$ first, and use the Green kernel to have 
\begin{equation}
\label{app-0020}
2\pi\phi = \int_{|\z|<1} \log\left|\frac{ z-\z}{1-z\bar\z} \right| d\nu(\z) + \int_{|\z|=1} \frac{1-|z|^2}{|z-\z|^2} d\sigma(\z),
\end{equation}
where the measures $d\nu = \Delta\phi \geq 0$ and $ |d\z| - d\sigma \geq 0$ on $S^1$.
Since $\phi(0)=0$, we obtain 
\begin{equation}
\label{app-0021}
\int_{|\z|<1} \log\frac{1}{|\z|} d\nu(\z) + \int_{|\z|=1} (|d\z| - d\sigma(\z)) = 2\pi.
\end{equation}
Therefore, it follows that 
\begin{equation}
\label{app-0022}
\int_{|\z|<1} \log\frac{1}{|\z|} d\nu(\z) \leq 2\pi;\ \ \ \int_{|\z|=1} |d\sigma(\z)| \leq 4\pi.
\end{equation}
Hence we have for all $|z| < e^{- \frac{1}{\b}}$,
$$ \left|   (2\pi)^{-1} \int_{|\z|=1} \frac{1-|z|^2}{|z-\z|^2} d\sigma(\z) \right| \leq 6 $$
and for any $ e^{-\frac{1}{\b}} < R < e^{- \frac{1}{2\b}}$,
$$  a: = \frac{1}{2\pi} \int_{|\z|<R} d\nu(\z) \leq \frac{1}{-\log R} < 2\b.  $$
Moreover, for such $z$ and $ |\z| > R$, we see $|z\bar\z| < |\z|^2$,
and then it is easy to see the following inequality:
$$ \left|   \frac{z-\z}{1- z\bar\z} \right| \leq \frac{1}{|\z|}. $$
This implies that 
$$ \left|   \int_{|\z|>R} \log \frac{|z-\z|}{|1-z\bar\z|} d\nu(\z)   \right| <  2\pi, \ \ \ |z|< e^{-\frac{1}{\b}}.$$
Then Jensen's inequality shows the following: 
\begin{equation}
\label{app-0023}
\begin{split}
& \exp \left(-\frac{1}{2\pi} \int_{|\z|<R} \log \frac{|z-\z|}{|1-z\bar\z|} d\nu(\z)    \right) 
\\
& = \exp \left( -\frac{1}{2\pi} \int_{|\z|<R} -a \log \frac{|z-\z|}{|1-z\bar\z|} \frac{d\nu(\z)}{2\pi a}  \right)
\\
& \leq  \frac{1}{2\pi a}  \int_{|\z|<R} \left(  \frac{|z-\z|}{|1-z\bar\z|} \right)^{-a} d\nu(\z)
\\
& < C \int_{|\z|<R} |z-\z|^{-a} d\nu(\z),
\end{split}
\end{equation}
for all $|z| < e^{-\frac{1}{\b}}$. 
Since $a<2\b$, the last term is integrable with respect to the measure $|z|^{2\b-2} d\lambda_{2n}$. 
Summing up we proved our estimate for $r:= e^{-\frac{1}{\b}}$ when $n=1$. 
For the general case, we use the polar coordinate to compute the integral 
\begin{equation}
\label{app-0024}
\int_{|z| < r } e^{-\phi (z)}d\mu(z) = \frac{1}{2\pi}\int_{|\z| =1} \frac{dS(\z)}{\prod_{k=1}^d |\z_k|^{2-2\b_k}} \int_{|w| < r } |w|^{2b-2} e^{- \phi(w\z)}d\lambda(w),
\end{equation}
where $b = n-d+ \sum_{k=1}^d \b_k > 0$. 
Taking $r(\b): = e^{-\frac{1}{b}}$, the integral is uniformly bounded.

\end{proof}

Thanks to Lemma (\ref{app-lem-002}) and equation (\ref{app-002}), 
we can follow the same argument as in Tian \cite{Tian},
and prove the existence of a conic version of the $\a$-invariants. 
\begin{prop}
\label{app-prop-001}
For all $\vp \in PSH(X,\omega)$ with $\sup_X \vp =0$, there exists a constant $\a>0$ such that 
$$\int_X e^{-\a \vp} d\mu < C, $$
for some uniform constant C only depending on $X$, $\omega$, $\mu$.
\end{prop}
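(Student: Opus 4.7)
The plan is to run Tian's original argument for the $\a$-invariant, replacing the classical H\"ormander $L^2$ integrability estimate by its conic analogue Lemma~(\ref{app-lem-002}), and using the $L^1(d\mu)$-bound (\ref{app-002}) to furnish the required compactness of normalised $\omega$-$psh$ functions. I would start by fixing a finite cover $\{U_j\}_{j=1}^N$ of $X$ by coordinate charts in which $\omega$ admits a smooth local potential $\rho_j$ with $\omega|_{U_j} = dd^c\rho_j$, and such that whenever $U_j$ meets $\text{Supp}(D)$ the coordinates are adapted to the normal crossing structure $\{z_1\cdots z_{d_j}=0\}$ with matching conic angles. After shrinking, each chart contains the unit Euclidean ball $B_1\subset\bC^n$, and a finite subcover by the smaller concentric balls of radius $r=r(\b)$ from Lemma~(\ref{app-lem-002}) already covers $X$.

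I would then argue by contradiction: if no uniform $\a$ worked, extract a sequence $\vp_k\in PSH(X,\omega)$ with $\sup_X \vp_k = 0$ and $\int_X e^{-\vp_k/k}d\mu \to +\infty$. Equation~(\ref{app-002}) gives a uniform $L^1(d\mu)$-bound on the $\vp_k$, hence an $L^1(\omega^n)$-bound since $\omega^n\leq C\,d\mu$. The standard compactness of normalised $\omega$-$psh$ functions extracts a subsequence $\vp_k\to \vp_\infty$ in $L^1$ with $\vp_\infty\in PSH(X,\omega)$ and $\sup_X\vp_\infty = 0$, and Hartogs' lemma applied to the local psh functions $\vp_k + \rho_j$ yields uniform upper bounds $\sup_{V_j}(\vp_k+\rho_j) \leq M_j$ on fixed sub-balls $V_j\subset\subset U_j$, independently of $k$.

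On each chart I would normalise $\phi_k^{(j)} := (\vp_k + \rho_j - \sup_{V_j}(\vp_k+\rho_j))/L$, with $L$ a uniform constant chosen so that $\phi_k^{(j)}\leq 1$ near the boundary of the unit ball in that chart. Lemma~(\ref{app-lem-002}), or a mild variant of its proof accommodating a lower bound $\phi(0)\geq -A$ for a uniform $A$ (the only change is that the mass estimate $\int\log|\z|^{-1}\,d\nu\leq 2\pi$ is replaced by $\leq 2\pi(1+A)$, which still forces the key inequality $a<2\b$ for $r$ small enough), then yields
$$
\int_{B_r} e^{-\phi_k^{(j)}}\,\frac{d\lambda_{2n}}{\prod_{k=1}^{d_j}|z_k|^{2-2\b_k}} \leq C_j.
$$
Translating back and summing over the finite cover gives a uniform bound $\int_X e^{-\vp_k/L}d\mu \leq \tilde C$, contradicting the assumed blow-up once $k\geq L$.

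The principal technical obstacle is to invoke Lemma~(\ref{app-lem-002}) when the singular locus of the conic measure sits at the origin while $\phi_k^{(j)}$ may be very negative there. One resolves this either by the small strengthening of Lemma~(\ref{app-lem-002}) sketched above, or by first truncating $\vp_k^\varepsilon := \max(\vp_k,-1/\varepsilon)$, applying the argument to the bounded psh functions $\vp_k^\varepsilon$, and passing to the limit via monotone convergence.
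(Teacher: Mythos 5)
Your overall scheme — uniform $L^1(d\mu)$-bound from (\ref{app-002}), Hartogs-type compactness, and the conic H\"ormander-type estimate of Lemma~(\ref{app-lem-002}) applied chart-by-chart — is exactly what the paper intends by ``following Tian's argument.'' But there is a genuine gap precisely at the point you yourself flag as the ``principal technical obstacle,'' and neither of your two suggested resolutions closes it.

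The hypothesis $\phi(0)=0$ in Lemma~(\ref{app-lem-002}) ties the normalisation to the fixed point $0$ of the chart, which is on $\mathrm{Supp}(D)$. Your normalised $\phi_k^{(j)}=(\vp_k+\rho_j-\sup_{V_j}(\vp_k+\rho_j))/L$ has $\sup_{V_j}\phi_k^{(j)}=0$, but that says nothing about $\phi_k^{(j)}(0)$: a general $\omega$-psh $\vp_k$ with $\sup_X\vp_k=0$ can satisfy $\vp_k(0)=-\infty$ (a log-pole along a component of $D$), so there is no \emph{uniform} $A$ with $\phi_k^{(j)}(0)\geq -A$. The ``small strengthening'' you describe therefore cannot be invoked: in the proof of Lemma~(\ref{app-lem-002}) the bound $\int\log|\zeta|^{-1}\,d\nu\leq 2\pi(1+A)$ forces $r$ (and hence $C$) to depend on $A$, roughly $r\lesssim e^{-c(1+A)}$, and $A$ is exactly what you cannot control. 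The truncation $\vp_k^{\varepsilon}=\max(\vp_k,-1/\varepsilon)$ does not rescue this: it produces $\phi(0)\geq -A_\varepsilon$ with $A_\varepsilon\to+\infty$, so $r(\varepsilon)\to 0$, the number of balls in the cover blows up, and the resulting constant is not $\varepsilon$-uniform; monotone convergence then yields nothing.

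What Tian's argument actually uses (and what is needed here) is a local estimate whose normalisation is an integral or measure-theoretic condition rather than a pointwise value at a prescribed singular point: for instance, for $\phi$ psh on $B$ with $\phi\leq 1$ near $\partial B$ and $\sup_{B_{1/2}}\phi\geq 0$ (equivalently, $\int_{B_{1/2}}\phi\,d\lambda\geq -C_0$, or $|\{\phi\geq -M\}\cap B_{1/2}|\geq c_0$), one has $\int_{B_r}e^{-\phi}\prod_{k}|z_k|^{2\b_k-2}\,d\lambda\leq C$, with $r$ and $C$ depending only on $n$, $\b$, $C_0$. That normalisation \emph{is} uniform over your family: it follows directly from (\ref{app-002}) (the $L^1(\omega^n)$-average of $\vp_k$ over $V_j$ is bounded below) together with Hartogs' upper bound, and requires no control of $\vp_k$ at the singular centre. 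Proving this modified lemma is the same Green-kernel/Jensen computation as in Lemma~(\ref{app-lem-002}), except that the Riesz representation and the mass bound $\int\log|\zeta|^{-1}\,d\nu\leq C(C_0)$ are taken at a (possibly varying) point where $\phi\geq -C_0'$ instead of at the origin. With that replacement your global assembly goes through as written.
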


Finally, for any $\cC^{2,\a,\b}$-conic K\"ahler metric $\Omega_{\b}$, the estimate in Proposition (\ref{app-prop-001}) still works 
for all $\vp\in PSH(X, \Omega_\b)$, since the conic potential is always uniformly bounded on $X$.

\begin{bibdiv}
\begin{biblist}

\bib{Brendle}{article}{
   author={Brendle, S.},
   title={Ricci flat K\"ahler metrics with edge singularities},
   journal={Int. Math. Res. Not. IMRN}
   date={2013}
   number={24}
   pages={5727-5766.}
}

\bib{Bo1}{article}{
   author={Berndtsson, B.}
   title={$L^2$-extension of $\dbar$-closed forms},
   journal={Illinois J. Math.}
   volume={56}
   date={2012}
   number={1}
   pages={21-31}
}

\bib{BGZ}{article}{
   author={Benelkourchi, S.},
   author={Guedj, V.},
   author={Zeriahi, A.}
   title={A priori estimates for weak solutions of complex Monge-Ampère equations.},
   journal={Ann. Sc. Norm. Super. Pisa Cl. Sci. (5)}
   volume={7}
   date={2008}
   number={1}
   pages={81-96}
}

\bib{BK}{article}{
   author={Blocki, Z.},
   author={Kolodziej, S.}
   title={On regularization of plurisubharmonic functions on manifolds.},
   journal={Proc. Amer. Math. Soc.}
   volume={135}
   date={2007}
   number={7}
   pages={2089-2093}
}

\bib{Chen}{article}{
   author={Chen, Xiuxiong},
   title={On the existence of constant scalar curvature K\"ahler metric: a new perspective},
   journal={arXiv:1506.06423}
}

\bib{CC1}{article}{
   author={Chen, Xiuxiong},
   author={Cheng, Jingrui},
   title={On the constant scalar curvature K\"ahler metrics, a priori estimates},
   journal={arXiv:1712.06697}
}

\bib{CC2}{article}{
  author={Chen, Xiuxiong},
   author={Cheng, Jingrui},
   title={On the constant scalar curvature K\"ahler metrics, existence results},
   journal={arXiv:1801.00656}
}

\bib{CC3}{article}{
   author={Chen, Xiuxiong},
   author={Cheng, Jingrui},
   title={On the constant scalar curvature K\"ahler metrics, general automorphism group},
   journal={arXiv:1801.05907}
}

\bib{CH}{article}{
   author={Chen, Xiuxiong},
   author={He, Weiyong},
   title={The complex Monge-Ampere equation on compact Kaehler manifolds},
   journal={ Math. Ann.}
   volume={354}
   date={2012}
   number={4}
   pages={1583-1600}
}

\bib{CGP13}{article}{
   author={Campana, F.},
   author={Guenancia, H.},
   author={P\u aun, M.}
   title={Metrics with cone singularities along normal crossing divisors and holomorphic tensor fields.},
   journal={Ann. Sci. Éc. Norm. Supér. (4)}
   volume={46}
   date={2013}
   number={6}
   pages={879-916}
}

\bib{Dem}{article}{
   author={Demailly, J-P.},
   title={Regularization of closed positive currents and intersection theory.},
   journal={J. Algebraic Geom.}
   volume={1}
   date={1992}
   number={3}
   pages={361-409}
}

\bib{DZZ}{article}{
   author={Dinew, S.},
   author={Zhang, X.},
   author={Zhang, X.W.}
   title={The $C^{2,\a}$ estimate of complex Monge-Amp\`ere equation.},
   journal={Indiana Univ. Math. J.}
   volume={60}
   date={2011}
   number={5}
   pages={1713-1722}
}

\bib{Don12}{book}{
   author={Donaldson, S.K.},
   title={K\"ahler metrics with cone singularities along a divisor. Essays in mathematics and its applications.},
   journal={Essays in mathematics and its applications.}
   pages={49-79}
   publisher={Springer, Heidelberg}
   date={2012}
}

\bib{Ho}{book}{
   author={H\"ormander, L.},
   title={An introduction to complex analysis in several variables.},
   publisher={North Holland, Amsterdam,}
   edition={3rd.}
   date={1990}
}

\bib{GP}{article}{
   author={Guenancia, H.},
   author={P\u aun, M.}
   title={Conic singularities metrics with prescribed Ricci curvature: general cone angles along normal crossing divisors.},
   journal={J. Differential Geom.}
   volume={103}
   date={2016}
   number={1}
   pages={15-57}
}

\bib{GT}{book}{
	author={Gilbarg, David},
	author={Trudinger, Neil},
	title={Elliptic partial differential equations of second order},
	series={Classics in Mathematics},
	note={Reprint of the 1998 edition},
	publisher={Springer-Verlag, Berlin},
	date={2001},
	pages={xiv+517},
	isbn={3-540-41160-7},
	review={\MR{1814364}},
}

\bib{GZ}{article}{
   author={Guedj, V. }
   author={Zeriahi, A.},
   title={Intrinsic capacities on compact K\"ahler manifolds.}
   journal={The Journal of Ceometric Analysis}
   volume={15}
   number={4}
   date={2005}
}

\bib{JMR}{article}{
   author={Jeffres, T. }
   author={Mazzeo, R.},
   author={Robinstein, Y.}
   title={K\"ahler-Einstein metrics with edge singularities.},
   journal={Ann. of Math. (2)}
   volume={183}
   date={2016}
   number={1}
   pages={95-176}
}

\bib{Kol}{article}{
   author={Kolodziej, S.},
   title={The complex Monge-Amp\`ere equation and pluripotential theory.},
   journal={Mem. Amer. Math. Soc. }
   volume={178}
   date={2005}
   number={840}
}

\bib{LZ}{article}{
	AUTHOR = {Li, Long},
	AUTHOR = {Zheng, Kai},
	TITLE = {Uniqueness of constant scalar curvature K\"ahler metrics with cone singularities, I: Reductivity},
	JOURNAL = {Math. Ann.},
	VOLUME = {},
	doi={10.1007/s00208-017-1626-z},
	date={2017},
	
}

\bib{Sweers}{book}{
   author={Sweers, G. },
   title={Maximum principle, a start.}
   journal={}
   date={2000}
}

\bib{Tian}{article}{
   author={Tian, Gang},
   title={On K\"ahler-Einstein metrics on certain K\"ahler manifolds with $C_1(M)>0$.},
   journal={Invent. math.}
   volume={89}
   pages={225-246}
   date={1987}
}

\bib{Yau}{article}{
   author={Yau, S.T.},
   title={On the Ricci curvature of a compact K\"ahler manifold and the complex Monge-Amp\`ere equation, I.},
   journal={Commun. Pure Appl. Math.}
   volume={31}
   pages={339-411}
   date={1978}
}

\bib{YZ}{article}{
   author={Yin,Hao }
   author={Zheng,Kai},
   title={Expansion formula for complex Monge-Amp\`ere equation along cone singularities},
   journal={arXiv:1609.03111}
}

\bib{Ze}{article}{
   author={Zeng,Yu},
   title={Deformations from a given K\"ahler metric to a twisted cscK metric},
   journal={arXiv:1507.06287}
}

\bib{Zheng}{article}{
   author={Zheng,Kai},
   title={Geodesics in the space of Kahler cone metrics, II. Uniqueness of constant scalar curvature Kahler cone metrics},
   journal={arXiv:1709.09616}
}

\bib{Zheng1}{article}{
   author={Zheng,Kai},
   title={Existence of constant scalar curvature K\"ahler cone metrics, properness and geodesic stability},
   journal={arXiv:1803.09506}
}

\end{biblist}
\end{bibdiv}

\end{document}